\setlist[enumerate,1]{leftmargin=*,wide=0em, noitemsep,nolistsep, label = {\bfseries \arabic*.}}
\setlist[itemize,1]{leftmargin=*,wide=0em, noitemsep,nolistsep}
\titleformat*{\section}{\large\bfseries}
\titleformat*{\subsection}{\large\bfseries}
\titleformat*{\subsubsection}{\large\bfseries}
\titleformat*{\paragraph}{\normalsize\bfseries}
\titleformat*{\subparagraph}{\normalsize\bfseries}
\DeclarePairedDelimiter{\ceil}{\lceil}{\rceil}
\newcommand {\uu}  { {\bf u} }
\newcommand {\zz}  { {\bf z} }
\newcommand {\bgg}  { {\bf g} }
\newcommand {\vg}  { {\bf g} }
\newcommand {\vs}  { {\bf s} }
\newcommand {\xx}  { {\bf x} }
\renewcommand {\aa}  { {\bf a} }
\newcommand {\yy}  { {\bf y} }
\newcommand {\pp}  { {\bf p} }
\newcommand {\vv}  { {\bf v} }
\newcommand {\ww}  { {\bf w} }
\newcommand {\bb}  { {\bf b} }
\newcommand{\hf}{\frac12}
\renewcommand{\vec}[1]{\ensuremath{\mathbf{#1}}}
\newcommand{\s}{\vec{s}}
\newcommand{\defeq}{\mathrel{\mathop:}=}
\newcommand{\defeqr}{=\mathrel{\mathop:}}
\renewcommand{\Pr}{\hbox{\bf{Pr}}}
\newcommand {\C} { {\mathcal{C}} }
\newcommand {\reals} { {\mathbb{R}} }
\newcommand{\prox}{\text{\textbf{prox}}}
\definecolor{forestgreen}{rgb}{0.13, 0.55, 0.13}
\newcounter{comment}\setcounter{comment}{0}
\theoremstyle{break}
\newtheorem{example}{Example}
\newenvironment{proof}[1][Proof]{\begin{trivlist}
\item[\hskip \labelsep {\bfseries #1}]}{\end{trivlist}}
\newenvironment{remark}[1][Remark]{\begin{trivlist}
\item[\hskip \labelsep {\bfseries #1}]}{\end{trivlist}}
\newcommand{\qed}{\nobreak \ifvmode \relax \else
      \ifdim\lastskip<1.5em \hskip-\lastskip
      \hskip1.5em plus0em minus0.5em \fi \nobreak
      \vrule height0.75em width0.5em depth0.25em\fi}
\definecolor{mygreen}{rgb}{0,0.6,0}
\definecolor{mygray}{rgb}{0.5,0.5,0.5}
\definecolor{mymauve}{rgb}{0.58,0,0.82}
\tiny\color{mygray}, % the style that is used for the line-numbers
\newcommand*\lin[1]{\langle #1\rangle}
\newcolumntype{P}[1]{>{\centering\arraybackslash}p{#1}}
\author{
	Xiang Cheng
	\thanks{
		Department of Computer Science, 
		UC Berkeley.
		Email: \tt x.cheng@berkeley.edu
	}
	\and
	Farbod Roosta-Khorasani
	\thanks{School of Mathematics and Physics,
	University of Queensland.
	Email: \tt fred.roosta@uq.edu.au 
	}
	\and
	Stefan Palombo
	\thanks{
		Department of Computer Science, 
		UC Berkeley.
		Email: \tt s.palombo@berkeley.edu
	}
	\and
	Peter L. Bartlett
	\thanks{
		Department of Statistics,
		UC Berkeley.
		Email: \tt bartlett@cs.berkeley.edu}	
	\and
	Michael W. Mahoney
	\thanks{
		ICSI and Department of Statistics, 
		UC Berkeley.
		Email: \tt mmahoney@stat.berkeley.edu
	}
}
\title{FLAG n' FLARE: Fast Linearly-Coupled Adaptive Gradient Methods}
\begin{document}

\maketitle

\begin{abstract}
	We consider first order gradient methods for effectively optimizing a composite objective in the form of a sum of smooth and, potentially, non-smooth functions. We present accelerated and adaptive gradient methods, called FLAG and FLARE, which can offer the best of both worlds. They can achieve the optimal convergence rate by attaining the optimal first-order oracle complexity for smooth convex optimization. Additionally, they can adaptively and non-uniformly re-scale the gradient direction to adapt to the limited curvature available and conform to the geometry of the domain. We show theoretically and empirically that, through the compounding effects of acceleration and adaptivity, FLAG and FLARE can be highly effective for many data fitting and machine learning applications. 
\end{abstract}

\section{Introduction}
\label{sec:introduction}

Optimization problems which exhibit particular structure appear often in many science, engineering, data analysis and machine learning applications~\cite{sra2012optimization, bubeck2015convex, bottou2016optimization}.
It is, by now, a well-known fact that taking proper advantage of the problem structure can lead to better performance guarantees and more effective algorithms compared to black-box, structure-oblivious methods; see~\cite[Section 4.1]{nesterov2004introductory} for a more detailed discussion and~\cite{nesterov94interior,nesterov2005smooth,nesterov2008rounding, parikh2014proximal, rodoas1,rodoas1,roszas, palomar2010convex, tibshirani1996regression, friedman2001elements, kulis2012metric} for many practical examples.

Here, we consider the optimization problem with the particular form
\begin{equation}
\label{eq:obj}
\min_{\xx \in \C} F(\xx) = f(\xx) + h(\xx),
\end{equation}
where $f: \reals^{d} \rightarrow \reals$ and $h: \reals^{d} \rightarrow \reals$ are, respectively, smooth and potentially non-smooth, closed proper convex functions and $\C$ is a closed convex set. Optimization problems of the form~\eqref{eq:obj} are often known as \emph{composite optimization} and arise in many applications. Notable examples are those in which $ h $ encapsulates an {\it a priori} assumption on the sought-after parameter $ \xx $, e.g., sparsity or low-rank structure.
Specific examples include the following.
\begin{example}
	The class of generalized linear models (GLMs)~\cite{mccullagh1989generalized} is used to model a wide variety of regression and classification problems. The process of data fitting using such GLMs usually consists of a training data set containing n response-covariate pairs, and the goal is to predict some
	output response based on some covariate vector, which is given after the training phase. Sparse maximum a posteriori (MAP) estimation using any GLM with canonical
	link function and Laplace prior leads to problems of the form~\eqref{eq:obj} 
	$$
	\min_{\xx \in \reals^{d}} \sum_{i=1}^{n} \left( \Phi(\aa_{i}^{T} \xx) - b_{i} \aa_{i}^{T} \xx \right) + \lambda\|\xx\|_{1},
	$$ 
	where $\{(\aa_{i},b_{i})\}_{i=1}^n$ form the response-covariate pairs in the training set (typically $\aa_{i} \in \mathbb{R}^{d}$, but the domain of $b_i$ depends on the type of GLM), and $\lambda \geq 0$ is the regularization parameter. The \textit{cumulant generating function}, $\Phi$, determines the type of GLM. For example, $\Phi(t) = 0.5 t^{2}$ gives rise to Lasso, while $\Phi(t) = \ln\left(1+\exp(t)\right)$ and $\Phi(t) = \exp(t)$ yield $\ell_{1}$-regularized logistic regression ($\ell_{1}$-LR) and $\ell_{1}$-regularized Poisson regression ($\ell_{1}$-PR), respectively.
\end{example}	
	
\begin{example}
	The problem of estimating a sparse undirected graphical model from empirical covariance matrix through the use of $ \ell_{1} $-type regularization gives rise to graphical lasso~\cite{friedman2008sparse}. Graphical Lasso is typically written as minimizing the penalized negative log-likelihood 
	$$\hat{X} = \arg \min_{X \succeq 0} \; \text{trace}(CX) - \log \det(X) + \lambda \|X\|_{1},$$ where $C$ is the empirical covariance matrix. Graphical Lasso is essentially the matrix equivalent of the linear regression Lasso.
\end{example}	
	
\begin{example}
Sparse matrix decompositions and approximations~\cite{hastie2015statistical} constitute a large class of problems in which the goal is to find an estimate which is close to a (partially observed) data in the form of a matrix $X^* \in \mathbb{R}^{m \times n}$ while satisfying certain properties, such as sparsity or low-rankness. This class of applications has particularly been, and continues to be, an active area of  research in recent years. In such problems, the objective is to find the estimate matrix $\hat{B}$ such that, for example, 
	$$\hat{X} = \arg \min \|\mathcal{P}(X^{*}) - \mathcal{P}(X)\|_{F}^{2} + \lambda \mathcal{R}(X),$$ 
	where $\mathcal{P}$ is the projection onto the observed set, $\|.\|_{F}$ is the Frobenius norm of a matrix, and $\mathcal{R}$ is a regularization that encourages $\hat{X}$ to satisfy certain structure.  The manner in which we define $\mathcal{R}$ leads to a variety of useful procedures, e.g., sparse matrix approximation is given by $\mathcal{R}(X) = \|X\|_{1}$ ($\|.\|_{1}$ is the sum of the absolute values of the matrix entries), low-rank matrix approximation is done via setting $\mathcal{R}(X) = \|X\|_{*}$ ($\|\cdot\|_{*}$ is the nuclear norm), sparse Principal Components Analysis (PCA) and other sparse-and-low rank additive matrix decompositions are given by considering $X = L + R$ with $\mathcal{R}(X) = \alpha \|L\|_{1} + \beta \|R\|_{*} $;
\end{example}	

In problems of the form~\eqref{eq:obj} with non-smooth $ h $, sub-gradient methods~\cite{bubeck2015convex,bertsekas2015convex,bagirov2014introduction} can result in algorithms with sub-linear convergence rate of order
\begin{align*}
F(\xx_{k}) - \min_{\xx \in \C} F(\xx) \leq \mathcal{O}\left(\frac{1}{\sqrt{k}}\right),
\end{align*}
where $ \xx_{k} $ is the $ k $-th iterate. However, if $ h $ is ``simple'', then algorithms with superior convergence rates exist. 
In particular, the class of Iterative Shrinkage-Thresholding Algorithms (ISTA) algorithms,~\cite{bredies2008linear,combettes2005signal,daubechies2004iterative,parikh2014proximal}, can improve upon the slow rate of sub-gradient methods and, indeed, recover the convergence rate of the standard gradient descent method, i.e., 
\begin{align*}
	F(\xx_{k}) - \min_{\xx \in \C} F(\xx) \leq \mathcal{O}\left(\frac{1}{k}\right).
\end{align*}

However, ISTA, both empirically and theoretically, has been shown to be very slow, e.g., see~\cite[section 6]{bredies2008linear}. As a result, there have been many efforts to \emph{accelerate} ISTA by non-trivial modifications, all of which are multi-step methods, i.e., the next iterate is computed from several previous ones. Most notably, the celebrated Fast Iterative Shrinkage-Thresholding Algorithm (FISTA)~\cite{beck2009fast} exploits smoothness of $ f $ and simple structure of $ h $ to improve the convergence rate to
\begin{align*}
	F(\xx_{k}) - \min_{\xx \in \C} F(\xx) \leq \mathcal{O}\left(\frac{1}{k^{2}}\right),
\end{align*}
which is known to be optimal for first order (gradient) methods \cite{nemirovskiĭ1983problem} and matches that of Nesterov's accelerated algorithms~\cite{nesterov1983method,nesterov2004introductory} for smooth problems. Similar accelerated multi-step methods have also been investigated for solving non-smooth problems of the form~\eqref{eq:obj}, e.g.,~\cite{nesterov2007gradient,bioucas2007new,elad2007coordinate,tseng2008proximal}. The great theoretical properties as well as empirical performance of such accelerated methods have prompted many authors to try to understand the underlying mechanism and the natural scope of the acceleration
concept, e.g., physical momentum, relations to other first-order algorithms as well as geometrical and continuous-time dynamics point of view~\cite{bubeck2015geometric,flammarion2015averaging,lessard2016analysis,allen2014linear,su2014differential,krichene2015accelerated,wibisono2016variational}. Most relevant to the present paper is the result of~\cite{allen2014linear} in which an acceleration scheme can was designed by an appropriate \emph{linear coupling} of the gradient and mirror descent steps to draw upon their complementary characteristics. The insightful idea of~\cite{allen2014linear} constitutes the first main ingredient for our proposed algorithms.

In addition to acceleration through a multi-step scheme and employing information from previous iterates, another approach to improve the empirical as well as the theoretical properties of first order methods for~\eqref{eq:obj} is by incorporating previous sub-gradients in the form of adaptively choosing a preconditioner for each gradient (mirror) step. This idea was first pioneered in Adagrad~\cite{duchi2011adaptive}, a sub-gradient method designed for online learning,~\cite{hazan2016introduction}. Through the use of the history of the sub-gradients from previous iterations, Adagrad scales the current sub-gradient to \emph{adapt} to the geometry of the domain. In particular, the coordinates of the search direction are non-uniformly scaled in order to take larger steps along the coordinates with smaller sub-derivatives and, correspondingly, smaller steps along those with larger sub-derivatives. Loosely speaking, this makes the optimization problem better-conditioned. For these reasons, Adagrad has been shown to be highly-suited to data fitting problems with, for example, sparse data~\cite{dean2012large,pennington2014glove}. This work has led to many related algorithms that have been widely used in machine learning applications, e.g., RMSProp~\cite{tijmen2012rmsprop}, ESGD~\cite{dauphin2015equilibrated}, Adam~\cite{kingma2014adam}, and Adadelta~\cite{zeiler2012adadelta}. The second critical ingredient for our algorithmic design is based on this successful idea of \emph{adaptivity} for non-uniform scaling of the search direction's coordinates. 

In this paper, we present methods which offer the best of both worlds. More precisely, we draw upon the ideas of linear coupling~\cite{allen2014linear} and adaptivity~\cite{duchi2011adaptive}, introduce a fast linearly-coupled adaptive gradient method (FLAG) along with its relaxation (FLARE), and demonstrate their theoretical and empirical performance for solving the composite problem~\eqref{eq:obj}. We show that FLAG and FLARE can be equivalently regarded as adaptive versions of FISTA or alternatively, as accelerated versions of AdaGrad adopted for problem~\eqref{eq:obj}. In other words, like Nesterov's accelerated algorithm and its proximal variant, FISTA, our methods achieve the optimal convergence rate of $1/k^2$ and like AdaGrad our methods adaptively choose a regularizer, in a way that performs almost as well as the best choice of regularizer in hindsight. These two desirable effects contribute to the improved theoretical properties as well as practical performance of FLAG and FLARE.

The rest of this paper in organized as follows. Notation, assumptions and definitions used throughout the paper are introduced in Section~\ref{sec:notation}. Our main algorithm, FLAG, and its theoretical properties are presented in Section~\ref{sec:flag}. FLAG can, at times, require more computational effort than FISTA due to the sub-routine involving the linear coupling. As a result, in Section~\ref{sec:flare}, we present a relaxed version of FLAG, dubbed FLARE, which by replacing this potentially expensive step of FLAG, alleviates this problem. Sections~\ref{sec:experiments} contains extensive numerical experiments demonstrating the performance of FLAG and FLARE as compared with FISTA. Conclusions and further thoughts are gathered in Section~\ref{sec:conclusions}. The details of the proofs are deferred to Appendix~\ref{sec:appendix}.

%The linear coupling framework in~\cite{allen2014linear} use a fixed coupling ratio to combine gradient and mirror
%descent steps. However,~\cite{bubeck2015geometric} views acceleration as an ellipsoid-like algorithm, and instead, determines an
%appropriate ratio using a line search. This latter idea is also crucial
%for the design of out algorithm, since by introduction of adaptivity, we will no longer know the right ratio in advance. Finally, our method for picking the adaptive stepsizes is closely related
%to the approach in~\cite{hazan2007adaptive}. There, the authors considered the problem of picking the right stepsize to adapt to an
%unknown strong-convexity parameter. We use a related approach, but with different proof techniques, to choose
%the stepsize to adapt to a changing smoothness parameter.

\subsection{Notation, Assumptions and Definitions}
\label{sec:notation}

\paragraph{Notation:}
In what follows, vectors are considered as column vectors and are denoted by bold lower case letters, e.g.,
$\xx$ and matrices are denoted by regular capital letters, e.g., $A$. We overload the ``diag'' operator as follows: for a given matrix $A$ and a vector $\xx$, $\text{diag}(A)$ and
$\text{diag}(\xx)$ denote the vector made from the diagonal elements of $A$ and a
diagonal matrix made from the elements of $\vv$, respectively. Vector
norms $\|\xx\|_{1}$, $\|\xx\|_{2}$ and $\|\xx\|_{\infty}$ denote the
standard $\ell_{1}$, $\ell_{2}$  and $\ell_{\infty}$ respectively. We adopt the \texttt{Matlab} notation for accessing the elements of vectors and matrices, i.e., $i^{th}$ components of a vector $\xx$ is indicated by $\xx(i)$ and $A(i,:)$ denotes the entire $i^{th}$ row of the matrix $A$. Finally, $A_{k} \gets [A_{k-1}, \vv]$ signifies that $A_{k}$ is the augmentation of the matrix $A_{k-1}$ with the column vector $\vv$. The optimal value of $ F $ is denoted by $ F^{*} = \min_{\xx \in \C} F(\xx)$. Finally, the sub-differential of a convex function, $ h $, at a point, $\xx$, is denoted by $\partial h(\xx)$.

\paragraph{Assumptions:}
Throughout this paper, we make the following assumptions for $ f $ and $ h $. 
\begin{enumerate}[label = {\bfseries A.\arabic*}]
	\item \label{assmpt:f} $f$ is convex and continuously differentiable with $L$-Lipschitz continuous gradient, i.e.,
	\begin{align}
	\label{eq:Lip}
	\|\nabla f(\xx) - \nabla f(\yy) \|_{2} \leq L \|\xx- \yy\|_{2}, \quad \forall \xx, \yy \in \C,
	\end{align} 
	and
	\item \label{assmpt:h} $ h $ is convex (but possibly non-smooth).
\end{enumerate}

\paragraph{Definitions:}
The proximal operator~\cite{parikh2014proximal} associated with $f$, $h$ and $L$ is defined as
\begin{align}
\label{eq:prox}
	\prox(\xx) \defeq %\arg\min_{\yy \in \C} h(\yy) + \lin{\nabla f(\xx), \yy-\xx} +\frac{L}{2}\|\yy-\xx\|_2^2 \\
	\arg\min_{\yy\in \C} \; h(\yy) + \frac{L}{2} \|\yy - \big({\xx} - \frac{1}{L}\nabla f(\xx)\big)\|_{2}^{2}.
\end{align}
For a symmetric positive definite (SPD) matrix $S$, define $\psi(\xx) \defeq \xx^{T} S \xx/2 = \|\xx\|_{S}^{2}$. 
The Bregman divergence associated with $\psi$ is defined as $B_{\psi} (\xx, \yy) \defeq \psi(\xx) -\psi(\yy) - \lin{\nabla \psi(\yy), \xx-\yy} = 0.5 \| \xx - \yy\|^{2}_{S}$. It is easy to see that the dual of $\psi(\xx)$ is given by 
\begin{align}
\label{eq:dual_norm}
\psi^{*}(\xx) = \sup_{\vv \in \reals^{d}} \lin{\xx,\vv} - \psi(\vv)  = \hf \xx^{T} S^{-1} \xx = \|\xx\|_{S^{-1}}^{2}.
\end{align}
Note that $\psi$ is $1$-strongly convex with respect to the norm $\|\xx\|_{S} \defeq \sqrt{\xx^{T} S \xx}$, i.e., $\quad \forall \xx,\yy \in \C$, we have $\psi(\xx) \geq \psi(\yy) + \lin{\nabla \psi(\yy), \xx - \yy} + \hf \| \xx - \yy\|^{2}_{S}$.
Finally, throughout our analysis, we will use the fact that, for any $\zz \in \C$,
\begin{align}
\label{eq:breg_eq}
&\lin{S (\xx - \yy),\xx - \zz} \nonumber \\
= &\hf \|\xx - \yy\|_{S}^{2} + \hf \|\xx - \zz\|_{S}^{2} - \hf \|\yy - \zz\|_{S}^{2}.
%
%\lin{S (\xx - \yy),\xx - \zz} &= \lin{\nabla_{\xx} B_{\psi} (\xx, \yy), \xx - \zz} \\
%&= \lin{\nabla \psi(\xx) -\nabla \psi(\yy), \xx - \zz} \\
%&= B_{\psi} (\xx, \yy) + B_{\psi} (\zz, \xx) - B_{\psi} (\zz, \yy) \\
%&= \hf \|\xx - \yy\|_{S}^{2} + \hf \|\xx - \zz\|_{S}^{2} - \hf \|\yy - \zz\|_{S}^{2}.
\end{align}

\section{FLAG}
\label{sec:flag}
In this section, we present our main algorithm, FLAG (Algorithm~\ref{alg:flag}), and give its main convergence properties in Theorem~\ref{thm:final_flag}. As mentioned in Section~\ref{sec:introduction}, FLAG incorporates techniques from linear coupling of~\cite{allen2014linear} and adaptivity of~\cite{duchi2011adaptive}. At a very high-level, the core of FLAG consists of the following five essential ingredients. 
\begin{enumerate}[label = \arabic*.]
	\item A gradient descent step (Step~\ref{alg_step_grad} of Algorithm~\ref{alg:flag}),
	\item Construction of the adaptive regularization (Steps~\ref{alg_step_grad_map}-\ref{algstep:S_k}  of Algorithm~\ref{alg:flag}),
	\item Update of the adaptive stepsize (Step~\ref{algstep:eta_k}  of Algorithm~\ref{alg:flag}),
	\item A mirror descent step (Step~\ref{algstep:mirror}  of Algorithm~\ref{alg:flag}),
	\item Linear combination of the gradient and the mirror descent directions (Step~\ref{algstep:comb} of Algorithm~\ref{alg:flag}).
\end{enumerate}

\begin{algorithm}[H]
	\caption{FLAG}
	\label{alg:flag}
	\begin{algorithmic}[1]
		\STATEx \textbf{Input:} $\xx_1 = \yy_1 = \zz_1$, $\eta_0 = 0$, $G_{0} = [\; \text{empty}\;]$, $ \delta > 0$, $T$, and $\epsilon = {1}/{(6 d T^3) }$ \smallskip
		\FOR{k=1 to T} \smallskip
		\STATE \label{alg_step_grad} $\displaystyle  \yy_{k+1} \gets \prox(\xx_{k})$ \smallskip
		\STATE \label{alg_step_grad_map} $\displaystyle  \pp_k \gets -L(\yy_{k+1}-\xx_{k})$ \smallskip
		\STATE $\displaystyle   \vg_{k} \gets \frac{\pp_k}{\|\pp_k \|_2}$ \smallskip
		\STATE \label{algstep:g_ik} $\displaystyle  G_{k} \gets \big[G_{k-1}, \vg_{k}\big]$ \smallskip
		\STATE \label{algstep:s_ki} $\displaystyle  \vs_{k}(i) \gets \|G_{k}(i,:)\|_2$ \smallskip
		\STATE \label{algstep:S_k}$\displaystyle S_k \gets \text{diag}(\vs_k) + \delta \mathbb{I}$ \smallskip
		\STATE \label{algstep:L_k} $\displaystyle  L_k \gets L{ \vg_{k}^{T} S^{-1}_{k} \vg_{k}}$ \smallskip
		\STATE \label{algstep:eta_k} $\displaystyle  \eta_k \gets \frac{1}{2 L_k} + \sqrt{\frac{1}{4 L_k^2} + \frac{\eta_{k-1}^2 L_{k-1}}{L_k} } $ 
		\STATE \label{algstep:mirror} $\displaystyle  \zz_{k+1} \gets \arg\min_{\zz\in \C} \lin{\eta_k \pp_k , \zz -\zz_{k}} + \hf \|\zz-\zz_{k}\|^{2}_{S_{k}}$ \smallskip
		\STATE \label{algstep:comb} $\displaystyle  \xx_{k+1}\gets \text{BinarySearch}(\zz_{k+1},\yy_{k+1}, \epsilon)$ \smallskip
		\ENDFOR \smallskip
		\STATEx \textbf{Output:} $\yy_{T+1}$
	\end{algorithmic}
\end{algorithm}

The subroutine \textit{BinarySearch} is given in Algorithm~\ref{alg:binsearch}, where $\text{\textit{Bisection}}(r,0,1,\epsilon)$ is the usual bisection routine for finding the root of a single variable function $r(t)$ in the interval $(0,1)$ and to the accuracy of $\epsilon$. More specifically, for a root $r^*$ such that $r(t^*) = 0$ and given $r(0) \cdot r(1) < 0$, the sub-routine $\text{\textit{Bisection}}(r,0,1,\epsilon)$ returns an approximation $t \in (0,1)$ to $t^*$ such that $|t - t^*| \leq \epsilon$ and this is done with only $\log (1/\epsilon)$ function evaluations; see~\cite[Section 3.2]{ascher2011first} for details and  example \texttt{Matlab} code. 
%In other words, if $w$ is such that $\lin{prox(w)-w, y-z}=0$, then \textit{BinarySearch} returns an $x$ such that $\|x-w\|_2\leq \epsilon \|y-z\|_2$.
\begin{algorithm}[H]
	\caption{BinarySearch}
	\label{alg:binsearch}
	\begin{algorithmic}[1]
		\STATEx \textbf{Input:} $\zz$, $\yy$, and $\epsilon$
		\STATE Define the univariate function $r(t) \defeq \lin{\prox\left(t \yy + (1-t)\zz\right)- \left(t \yy + (1-t)\zz\right) ,\yy-\zz}$
		\IF{$r(1) \geq 0$} \label{bin_step_y}
		\STATE Return $\yy$
		\ENDIF
		\IF{$r(0)\leq 0$} \label{bin_step_z}
		\STATE Return $\zz$
		\ENDIF
		\STATE $t = \text{Bisection}(r,0,1,\epsilon)$
		\STATE Return  $\xx = t \yy + (1-t)\zz$
	\end{algorithmic}
\end{algorithm}

We are now ready to present our main result, Theorem~\ref{thm:final_flag}, which gives the convergence properties of Algorithm~\ref{alg:flag}.
\begin{theorem}[Convergence of FLAG]
	\label{thm:final_flag}
	Let Assumptions~\ref{assmpt:f} and~\ref{assmpt:h} hold and define $\displaystyle D_{\infty} \defeq \sup_{\xx, \yy \in \C} \|\xx - \yy\|_\infty$. For any $\uu \in \C$, after $T$ iterations of Algorithm \ref{alg:flag}, we get
	\begin{equation*}
	F(\yy_{_{T+1}}) - F(\uu) \leq \mathcal{O}\left(\frac{\beta L D_{\infty}^{2}}{T^2}\right),
	\end{equation*}
	for some $\beta \in[1,d]$. Furthermore, each iteration takes time at most $\mathcal{O}(\mathcal{T}_{_{\textbf{\prox}}} \cdot\log(dT^3))$, where $ \mathcal{T}_{_{\textbf{\prox}}} $ is the cost of evaluating $ \prox $ in~\eqref{eq:prox}.
\end{theorem}

%\subsection{Comparison of FLAG with AdaGrad and FISTA}
%\label{sec:comparison}
\begin{enumerate}[leftmargin=*,wide=0em, noitemsep,nolistsep, label = {\bfseries Remark \arabic*:}]
	\item Recall that the convergence rate of FISTA is given by
	\begin{equation*}
	F(y_{_{T+1}}) - F^{*} \leq \mathcal{O}\left(\frac{L D_{2}^{2}}{T^2}\right),
	\end{equation*}
	where
	$\displaystyle D_{2} \defeq \sup_{\xx, \yy \in \C} \|\xx - \yy\|_2$~\cite[Theorem 4.4]{beck2009fast}. A quick comparison between FISTA's upper bound with that of FLAG in Theorem~\ref{thm:final_flag} implies that the ``competitive factor'' of FLAG over FISTA is 
	\begin{align*}
	\text{Competitive Factor} = \frac{\beta D_{\infty}^{2}}{D_{2}^{2}}.
	\end{align*}
	For~\eqref{eq:obj}, consider a box-constraint of the form $ \mathcal{C} = \{\xx; \; \|\xx\|_{\infty} \leq 1\} $. It is easy to see that since $ D_{2} = \sqrt{d} D_{\infty} $, we have $ \text{Competitive Factor} \in [1/d,1] $. In such settings, the adaptivity introduced by FLAG can offer a significant improvement in the convergence properties. This is indeed similar to the improvement obtained by Adagrad over proximal sub-gradient methods\footnote{For the non-smooth settings considered by Adagrad, the competitive factor is in the form of ${\sqrt{\beta} D_{\infty}}/{D_{2}}$ }.
	
	\item
	From the proof of Theorem~\ref{thm:final_flag}, we can see that 
	\begin{align*}
	\beta = \frac{\left(\sum_{i=1}^{d} \sqrt{\sum_{t=1}^{T} [\tilde \bgg_{t}]_{i}^2}\right)^{2}}{ T },
	\end{align*}
	where $\tilde \bgg_{t} \defeq \bgg_{t}/ \|\bgg_{t}\|$. For illustration purposes only, let us consider $d=4$, and $T = 3$. Indeed, for the following gradient histories, we can verify that $\beta \in [1,4] $, e.g., 
	\begin{align*}
	[\tilde \bgg_1, \tilde \bgg_2, \tilde \bgg_3] &= \begin{bmatrix}
	1 & -1 & -1 \\
	0 & \;\;\;0 & \;\;\;0\\
	0 & \;\;\;0 & \;\;\;0\\
	0 & \;\;\;0 & \;\;\;0 
	\end{bmatrix} \Longrightarrow \beta = 1, \\
	[\tilde \bgg_1, \tilde \bgg_2, \tilde \bgg_3] &= \begin{bmatrix}
	1 & \;\;\;0 & \;\;\;0 \\
	0 & \;\;\;1 & \;\;\;0 \\
	0 & \;\;\;0 & \;\;\;0 \\
	0 & \;\;\;0 & \;\;\;1 \\
	\end{bmatrix} \Longrightarrow \beta = 3, \\
	[\tilde \bgg_1, \tilde \bgg_2, \tilde \bgg_3] &= \begin{bmatrix}
	\vspace{1mm} \hf & \;\;\;\hf & -\hf \\ \vspace{1mm}
	\hf & \;\;\;\hf & \;\;\;\hf\\ \vspace{1mm}
	\hf & -\hf & -\hf\\ \vspace{1mm}
	\hf & \;\;\;\hf & -\hf 
	\end{bmatrix} \Longrightarrow \beta = 4.
	\end{align*} 
\end{enumerate}

\section{FLARE}
\label{sec:flare}
The ``BinarySearch'' in Step~\ref{algstep:comb} of Algorithm~~\ref{alg:flag} can be the bottleneck of the computations. Indeed, from Theorem~\ref{thm:final_flag} it can be seen that the running time of FLAG, in the worst case, is dominated by the number of $ \prox $  evaluations involved in the root finding procedure of Algorithm~\ref{alg:binsearch}. As a result, despite the fact that FLAG achieves the same accelerated convergence rate as FISTA, its per-iteration cost can be much higher than what adaptivity can make up for; see examples of Section~\ref{sec:experiments}. In this section, we modify FLAG to obtain a relaxed version, FLARE, whose per-iteration complexity is theoretically similar to FLAG, but empirically is shown to be almost identical to that of FISTA, i.e., $ \mathcal{O}(\mathcal{T}_{\prox}) $. 

The proposed relaxation in FLARE will be done by ``guessing'' $ L_{k} $ in FLAG, i.e., Step~\ref{algstep:L_k} of Algorithm~\ref{alg:flag}, at iteration $ k $  and performing the update without immediately resorting to ``BinearySearch''. We subsequently verify in the next iteration that the guessed $L_k$ is not too far from the truth; otherwise, we repeat the previous iteration with a better guess. The resulting relaxation is given in Algorithm~\ref{alg:flare}.

\begin{algorithm}[htb]
	\caption{FLARE}
	\label{alg:flare}
	\begin{algorithmic}[1]
		\STATEx \textbf{Input:} $\xx_1 = \yy_1 = \zz_1$, $\eta_0 = 0$, $G_{0} = [\; \text{empty}\;]$, $ \delta > 0$, $\gamma > 1$, $\lambda > 1$, $T$, and $\epsilon \leq {1}/{(6 d T^3) }$ \smallskip
		\WHILE{$k\leq T$}
		\STATE $\text{accept}\gets \text{FALSE}$
		\STATE $i \gets 0$
		\WHILE{$\text{accept}\  \neq \text{TRUE}$}
		$i=i+1$
		\IF {$i \leq \log \frac{d}{\epsilon}$}
		\STATE \label{e:guess} $\tilde{L}_{k} \gets L_{k-1}\cdot \gamma^i$
		\STATE \label{e:accept_verify} $\displaystyle [\eta_k, \xx_k, G_k, S_k, L_k, \yy_{k+1}, \zz_{k+1}, \text{accept}]\gets \text{A$\&$V}(G_{k-1}, S_{k-1}, \eta_{k-1}, \tilde{L}_{k-1}, \yy_k,\zz_k, \tilde{L}_k, \lambda)$ 
		\ELSE 
		\STATE \label{e:flag_iteration} $\displaystyle [\eta_k, \xx_k, G_k, S_k, L_k, \yy_{k+1}, \zz_{k+1}]\gets \text{FLAGIteration}(G_{k-1}, S_{k-1}, \eta_{k-1}, \tilde{L}_{k-1}, \yy_k,\zz_k, \tilde{L}_k)$
		\STATE $\text{accept}\gets \text{TRUE}$
		\ENDIF
		\ENDWHILE
		\ENDWHILE
	\end{algorithmic}
\end{algorithm}

Algorithm~\ref{alg:flare} involves three main steps. Step~\ref{e:guess} aims at guessing a viable value for $ L_{k} $, which can be used at the present iteration. As depicted here and used in our numerical experiments, we have considered guessing $ L_{k} $ with some multiple of the known $ L_{k-1} $. However, Step \ref{e:guess} can be replaced by any reasonable subroutine that tries to guess the valid ratio for $\tilde{L}_k$. Step~\ref{e:accept_verify} contains a subroutine, dubbed ``A$\&$V'' (short for \emph{Advance and Verify}), which computes $\xx_k$, $\yy_{k+1}$ and $\zz_{k+1}$ using the guess $\tilde{L}_k$, and returns ``accept=TRUE'' if $\tilde{L}_k$ is a sufficiently good guess. Finally, Step~\ref{e:flag_iteration} involves the ``FlagIteration'' subroutine, which, by reverting back to using ``BinarySearch'', computes $\xx_k$, $\yy_{k+1}$ and $\zz_{k+1}$. This step is almost identical to one full iteration of FLAG in \eqref{alg:flag}, though the statements are ordered differently. As shown in the proof of Theorem~\ref{thm:mastertheorem}, the resulting updates generated from this step are always acceptable. In all of our numerical simulations, however, we have never observed FLARE performing Step~\ref{e:flag_iteration}. In fact, most often, the very first guess in Step~\ref{e:guess} is deemed acceptable by Step~\ref{e:accept_verify} leading to FLARE requiring only one ``$ \prox $'' evaluation per iteration (as in FISTA).

\begin{algorithm}[htb]
	\caption{A$\&$V: Advance and Verify}
	\label{alg:advanceandverify}
	\begin{algorithmic}[1]
	\STATEx \textbf{Input:} $G_{k-1}, S_{k-1}, \eta_{k-1}, \tilde{L}_{k-1}, \yy_k,\zz_k, \tilde{L}_k, \lambda$
		%\STATE $\displaystyle  \xx_{k}\gets \text{BinarySearch}(\zz_{k},\yy_{k}, \epsilon)$ 
		\STATE $\displaystyle  \eta_k \gets \frac{1}{2 \tilde{L}_k} + \sqrt{\frac{1}{4 \tilde{L}_k^2} + \frac{\eta_{k-1}^2 \tilde{L}_{k-1}}{\tilde{L}_k} } $ 
		\STATE $\displaystyle \xx_{k} \gets \Big(1-\frac{1}{\eta_{k} \tilde{L}_{k}}\Big) \yy_{k} + \frac{1}{\eta_{k} \tilde{L}_{k}} \zz_{k}$ 
		\STATE $\displaystyle  \yy_{k+1} \gets \prox(\xx_{k})$
		\STATE $\displaystyle  \pp_k \gets -L(\yy_{k+1}-\xx_{k})$
		\STATE $\displaystyle   \vg_{k} \gets \frac{\pp_k}{\|\pp_k \|_2}$
		\STATE $\displaystyle  G_{k} \gets \big[G_{k-1}, \vg_{k}\big]$
		\label{algstep:sk_i_flare_1}
		\STATE $\displaystyle  \vs_{k}(i) \gets \|G_{k}(i,:)\|_2$
		\STATE $\displaystyle S_k \gets \text{diag}(\vs_k) + \delta \mathbb{I}$
		\STATE \label{algstep:L_k_flare_1}$\displaystyle  L_k \gets L{ \vg_{k}^{T} S^{-1}_{k} \vg_{k}}$
		\STATE $\displaystyle  \zz_{k+1} \gets \arg\min_{\zz\in \C} \lin{\eta_k \pp_k , \zz -\zz_{k}} + \hf \|\zz-\zz_{k}\|^{2}_{S_{k}}$
		\STATE \label{algstep:L_k_accept_condition}accept $\gets \tilde{L}_k \in [L_k, \lambda L_k]$
	\end{algorithmic}
\end{algorithm}

\begin{algorithm}[htb]
	\caption{FlagIteration}
	\label{alg:flagiteration}
	\begin{algorithmic}[1]
	\STATEx \textbf{Input:} $G_{k-1}, S_{k-1}, \eta_{k-1}, \tilde{L}_{k-1}, \yy_k,\zz_k, \tilde{L}_k, \epsilon$
		\STATE $\displaystyle  \xx_{k}\gets \text{BinarySearch}(\zz_{k},\yy_{k}, \epsilon)$ 
		\STATE $\displaystyle  \yy_{k+1} \gets \prox(\xx_{k})$
		\STATE $\displaystyle  \pp_k \gets -L(\yy_{k+1}-\xx_{k})$
		\STATE $\displaystyle   \vg_{k} \gets \frac{\pp_k}{\|\pp_k \|_2}$
		\STATE $\displaystyle  G_{k} \gets \big[G_{k-1}, \vg_{k}\big]$
		\label{algstep:sk_i_flare_2}
		\STATE $\displaystyle  \vs_{k}(i) \gets \|G_{k}(i,:)\|_2$
		\STATE $\displaystyle S_k \gets \text{diag}(\vs_k) + \delta \mathbb{I}$
		\STATE \label{algstep:L_k_flare_2}$\displaystyle  L_k \gets L{ \vg_{k}^{T} S^{-1}_{k} \vg_{k}}$
		\STATE \label{algstep:L_k_flare_3}$\displaystyle  \tilde{L}_k \gets L_k$
		\STATE $\displaystyle  \eta_k \gets \frac{1}{2 \tilde{L}_k} + \sqrt{\frac{1}{4 \tilde{L}_k^2} + \frac{\eta_{k-1}^2 \tilde{L}_{k-1}}{\tilde{L}_k} } $ 
		\STATE $\displaystyle  \zz_{k+1} \gets \arg\min_{\zz\in \C} \lin{\eta_k \pp_k , \zz -\zz_{k}} + \hf \|\zz-\zz_{k}\|^{2}_{S_{k}}$
	\end{algorithmic}
\end{algorithm}

%%%%%%%%%%%%%
%%%%%%%%%%%%

The following result describes the main convergence properties of FLARE.
\begin{theorem}[Convergence of FLARE]
	\label{thm:final_flare}
	Let Assumptions~\ref{assmpt:f} and~\ref{assmpt:h} hold and define $\displaystyle D_{\infty} \defeq \sup_{\xx, \yy \in \C} \|\xx - \yy\|_\infty$. For any $\uu \in \C$, after $T$ iterations of Algorithm \ref{alg:flare}, we get
	\begin{equation*}
	F(\yy_{_{T+1}}) - F(\uu) \leq \mathcal{O}\left(\frac{\lambda  \beta L D_{\infty}^{2}}{T^2}\right),
	\end{equation*}
	for some $\beta \in[1,d]$, where $\lambda$ is a constant specified in the input to Algorithm \ref{alg:flare}. Furthermore, each iteration takes time at most $\mathcal{O}(\mathcal{T}_{_{\textbf{\prox}}} \cdot\log(dT^3))$, where $ \mathcal{T}_{_{\textbf{\prox}}} $ is the cost of evaluating $ \prox $ in~\eqref{eq:prox}.
\end{theorem}

Note that by Theorem~\ref{thm:final_flare}, the overall worst-case iteration complexity of FLARE (Algorithm~\ref{alg:flare}), is similar to that of FLAG (Algorithm~\ref{alg:flag}). This is mainly due to the fact that, in worst case, Algorithm \ref{alg:flare} can end up resorting to ``BinarySearch'' when repeated guessing fails. However, through extensive numerical experiments, we have observed that we rarely require more than one ``$ \prox $'' evaluation per iteration. In particular, Step~\ref{e:guess} and~\ref{e:accept_verify} of Algorithm~\ref{alg:flare}, most often, are only performed once, while Step~\ref{e:flag_iteration} is never executed.

\section{Numerical Experiments}
\label{sec:experiments}
We now numerically illustrate the performance of FLAG and FLARE in comparison to FISTA. We first consider comparing the performance of these algorithms with respect to the \emph{total number of iterations}. Admittedly, ``performance vs.\ iterations'' is an unfair measure of comparing these algorithms. Indeed, each iteration of FLAG and FLARE can involve more ``$\prox$'' evaluations than FISTA, and as noted in Section 2, in the worst case such ``$\prox$'' evaluations can dominate the running time. Therefore, we subsequently evaluate these algorithms as measured by \emph{total number of $\prox$ evaluations}, which is arguably more indicative of real world performance. In this light, we demonstrate that FLARE and FISTA perform favorably with respect to FLAG, with FLARE consistently outperforming the rest.  

%\subsection{Experimental Setup}
%\label{sec:experimental_setup}
We compare FLAG, FLARE and FISTA on both regression and classification tasks. For regression experiments, we utilized squared loss $ f(\xx) = \frac{1}{2} \| A\xx - \bb\| _{2}^2 $, where $ A \in \mathbb{R}^{n \times d} $ and $ b\in\mathbb{R}^{n} $ are, respectively, the data matrix and the response vector. For classification experiments, we employed a softmax classifier. In such a classifier, given $C$ classes and a data point $\aa$, the probability that $\aa$ belongs to a class $c \in \{1,2,\ldots,C\}$ is given as
\begin{align*}
	\Pr\left(c|\aa,\xx_{1},\ldots, \xx_{C}\right) = \frac{e^{\lin{\aa,\xx_{c}}}}{\sum_{c' = 1}^{C} e^{\lin{\aa, \xx_{c'}}}},
\end{align*}
where $ \xx_{c} \in \mathbb{R}^{p}$ is the weight vector corresponding to class $ c $. Recall that here there are only $ C-1 $ degrees of freedom, i.e., probabilities all must sum to one. Consequently, for a training data $\{\aa_{i},b_{i}\}_{i=1}^{n} \subset \mathbb{R}^{p} \times \{1,2,\ldots,C\}$, the cross-entropy loss function for $ \xx = [\xx_{1}; \xx_{2}; \ldots; \xx_{C-1}] \in \mathbb{R}^{(C-1)p} $ can be written as
\begin{align*}
		&f(\xx) \triangleq f(\xx_{1},\xx_{2},\ldots,\xx_{C-1}) \\
        = &\sum_{i=1}^{n} \left(\log \left(1+\sum_{c' = 1}^{C-1} e^{\lin{\aa_{i}, \xx_{c'}}}\right)  - \sum_{c = 1}^{C-1}\mathbf{1}(b_{i} = c) \lin{\aa_{i},\xx_{c}}\right).
\end{align*} 
Note that here $ d = (C-1) p $. It then follows that the gradient of $f$ with respect to $\xx_c$ is
\begin{align*}
	\nabla_{\xx_{c}} f(\xx) = \sum_{i=1}^{n} \left(\frac{e^{\lin{\aa_{i}, \xx_{c}}}}{1+\sum_{c' = 1}^{C-1} e^{\lin{\aa_{i}, \xx_{c'}}}}  - \mathbf{1}(b_{i} = c) \right) \aa_{i}.
\end{align*}

For each regression and classification formulation of $ f(\xx) $, we consider two variants for $ h(\xx) $ and $ \mathcal{C} $: unconstrained $ \ell_1$ regularization, i.e.,
$ h(\xx) = \lambda \|\xx\|_{1}, \;\mathcal{C} = \mathbb{R}^{d} $, as well as unregularized box-constrained as $ h(\xx) = 0, \; \mathcal{C} = \{\xx; \; \| \xx \|_{\infty} \leq c\} $, where $ \lambda $ and $ c $ are, respectively, the regularization parameter for $ \ell_{1} $ norm and the infinity ball radius. Recall that for regression, the former variant amounts to the celebrated Lasso~\cite{tibshirani1996regression}. In our experiments, we choose $ \lambda = 0.1 $ and $ c = 1 $.  It is well-known that ``$\prox$'' operator for $\ell_1$ regularization is readily given via soft-thresholding~\cite{parikh2014proximal}, 
while in the case of box constraints, it involves the projection of the gradient step onto the infinity ball of the given radius. 
We tested regression and classification tasks on multiple real data sets. Tables~\ref{table:classification} and~\ref{table:regression}, respectively, summarize the data sets used for these tasks.

%\begin{table}[!htbp]
%\caption{Data Sets for Classification Experiments. ``Var.'' refers to variants used for $ h(\xx) $ and $ \mathcal{C} $, i.e., ``Box'' for box-constrained  and ``$ \ell_{1} $'' for $ \ell_{1} $-regularized variants, as mentioned in Section~\ref{sec:experiments}.\label{table:classification}}
%\begin{center}
%\resizebox{9.4cm}{!}{%
%\centering
%\begin{tabular}{|P{1.7cm}|P{1cm}|P{1cm}|P{0.8cm}|P{0.3cm}|P{0.6cm}|P{0.7cm}|}
%\hline
%{\bf Name}  &{$ \bm{n} $} & {\bf Test Size} & {\bf $ \bm{p} $} &{\bf $ \bm{C} $} &{\bf Var.} &{\bf Ref.} \\
%\hline
%\texttt{20 Newsgroups}         & 10,142 & 1,127 &  53,975 & 20 & Box & \cite{lang1995newsweeder}\\\hline
%\texttt{HARUS}             & 7,767 & 3,162 & 561 & 12 & Box & \cite{anguita2013public}\\\hline
%\texttt{Gisette}             & 6,000 & 1,000 & 5,000 & 2 & $\ell_1$ & \cite{guyon2004Result} \\\hline
%\texttt{Forest Covertype}             & 435,759 & 145,253 & 54 & 7 & $\ell_1$ & \cite{libsvm}\\\hline
%\end{tabular}
%}
%\end{center}
%\end{table}

\begin{table}[!htbp]
	\caption{Data Sets for Classification Experiments. ``$ \#\text{Test} $'' indicates the size of the test set. ``Var.'' refers to variants used for $ h(\xx) $ and $ \mathcal{C} $, i.e., ``Box'' for box-constrained  and ``$ \ell_{1} $'' for $ \ell_{1} $-regularized variants, as mentioned in Section~\ref{sec:experiments}.\label{table:classification}}
	\begin{center}
		%\resizebox{9.4cm}{!}{%
			\centering
			\begin{tabular}{|P{.9cm}|P{1.68cm}|P{0.8cm}|P{1.15cm}|P{1.5cm}|}
				\hline
				{\bf Name}  & \texttt{20 Newsgroups} & \texttt{HARUS} & \texttt{Gisette} & \texttt{Forest Covertype} \\ \hline
				{$ \bm{n} $} & 10,142 & 7,767 & 6,000 & 435,759 \\ \hline
				{\bf $ \#\text{Test} $} & 1,127 & 3,162 & 1,000 & 145,253 \\\hline
				{\bf $ \bm{p} $} & 53,975 & 561 & 5,000 & 54 \\\hline
				{\bf $ \bm{C} $} & 20 & 12 & 2 & 7 \\\hline
				{\bf Var.} & Box & Box & $\ell_1$ & $\ell_1$ \\\hline
				{\bf Ref.} & \cite{lang1995newsweeder} & \cite{anguita2013public} & \cite{guyon2004Result} & \cite{libsvm}\\\hline
			\end{tabular}
		%}
	\end{center}
\end{table}
%\FloatBarrier

%\begin{table}[!htbp]
%\caption{Data Sets for Regression Experiments.\label{table:regression}} 
%\begin{center}
%\resizebox{9.4cm}{!}{%
%\begin{tabular}{p{1.55cm}p{1.4cm}p{1.4cm}p{1.25cm}p{1.1cm}p{1.1cm}}
%{\bf Name}  &{\bf Train Set Size} & {\bf Test Set Size} & {\bf Features} &{\bf Variant} &{\bf Ref.} \\
%\hline \\
%\texttt{Blog Feedback} & 47157 & 5240 &  280 & Box & \cite{buza2014feed}\\ 
%\texttt{Facebook CVD} & 36854 & 4095 & 53 & $\ell_1$ & \cite{Sing1503:Comment} \\
%
%\end{tabular}
%}
%\end{center}
%\end{table}

\begin{table}[!htbp]
	\caption{Data Sets for Regression Experiments. ``$ \#\text{Test} $'' indicates the size of the test set. ``Var.'' refers to variants used for $ h(\xx) $ and $ \mathcal{C} $, i.e., ``Box'' for box-constrained  and ``$ \ell_{1} $'' for $ \ell_{1} $-regularized variants, as mentioned in Section~\ref{sec:experiments}.\label{table:regression}}
	\begin{center}
		%\resizebox{9.4cm}{!}{%
		\centering
		\begin{tabular}{|c|c|c|}
			\hline
			{\bf Name}  & \texttt{Blog Feedback} & \texttt{Facebook CVD} \\ \hline
			{$ \bm{n} $} & 47,157 & 36,854  \\ \hline
			{\bf $ \#\text{Test} $} & 5,240 & 4,095\\\hline
			{\bf $ \bm{d} $} & 280 & 53 \\\hline
			{\bf Var.} & Box & $\ell_1$ \\\hline
			{\bf Ref.} & \cite{buza2014feed} & \cite{Sing1503:Comment} \\\hline
		\end{tabular}
		%}
	\end{center}
\end{table}
%\FloatBarrier

We ran FLAG, FLARE, and FISTA for 1000 iterations each on softmax classification for the data sets enumerated in Table~\ref{table:classification}. Both variants of $h(\xx)$ and $ \mathcal{C} $ are represented. The per iteration loss and test accuracy are displayed in Figures~\ref{fig:20NewsIters},~\ref{fig:HARUSIters},~\ref{fig:GisetteIters}, and~\ref{fig:CoverTypeIters}.

\begin{figure}[!htbp]
\centering
%twenty news
\subfigure{
\includegraphics[width=0.45\textwidth]{./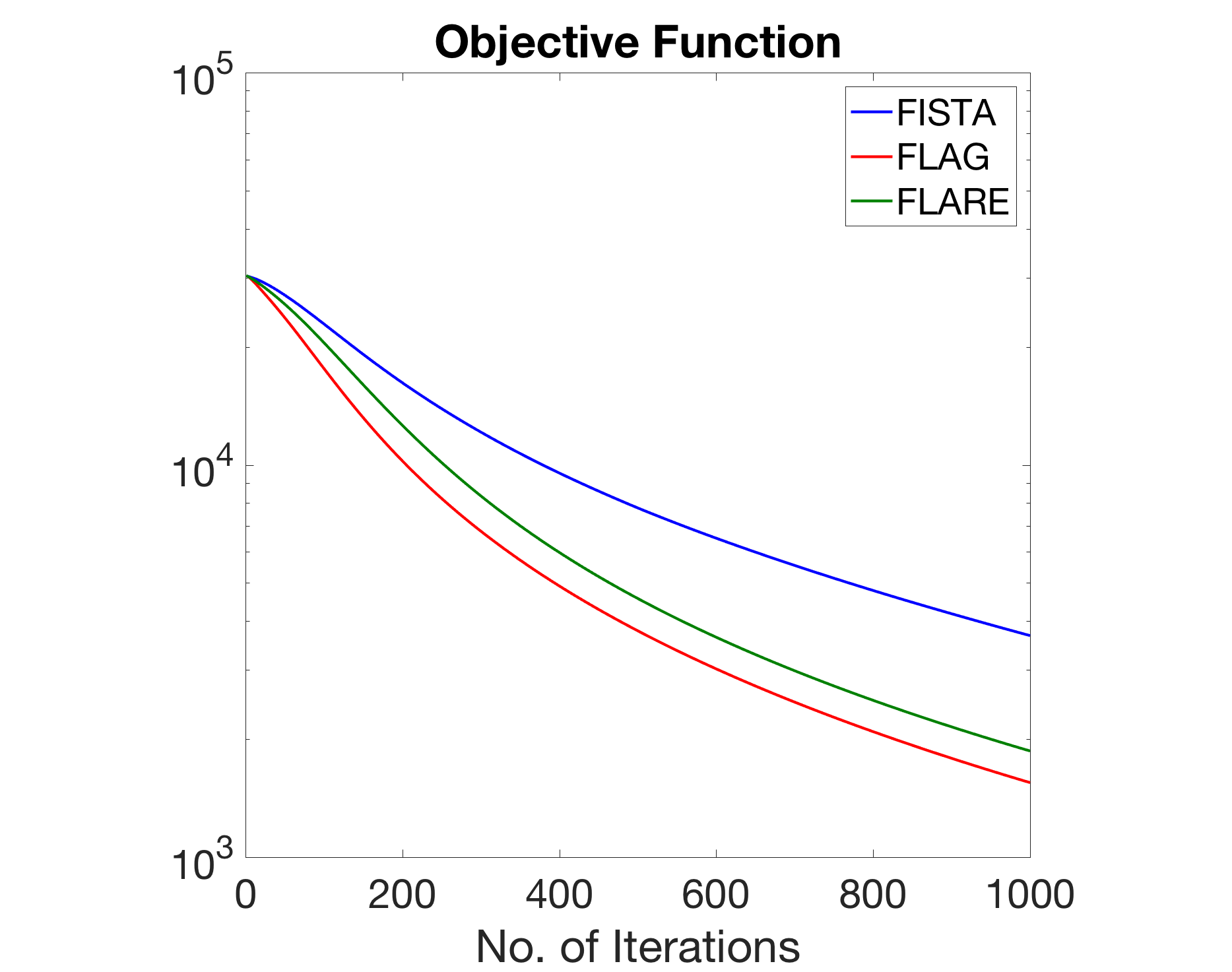}

\includegraphics[width=0.45\textwidth]{./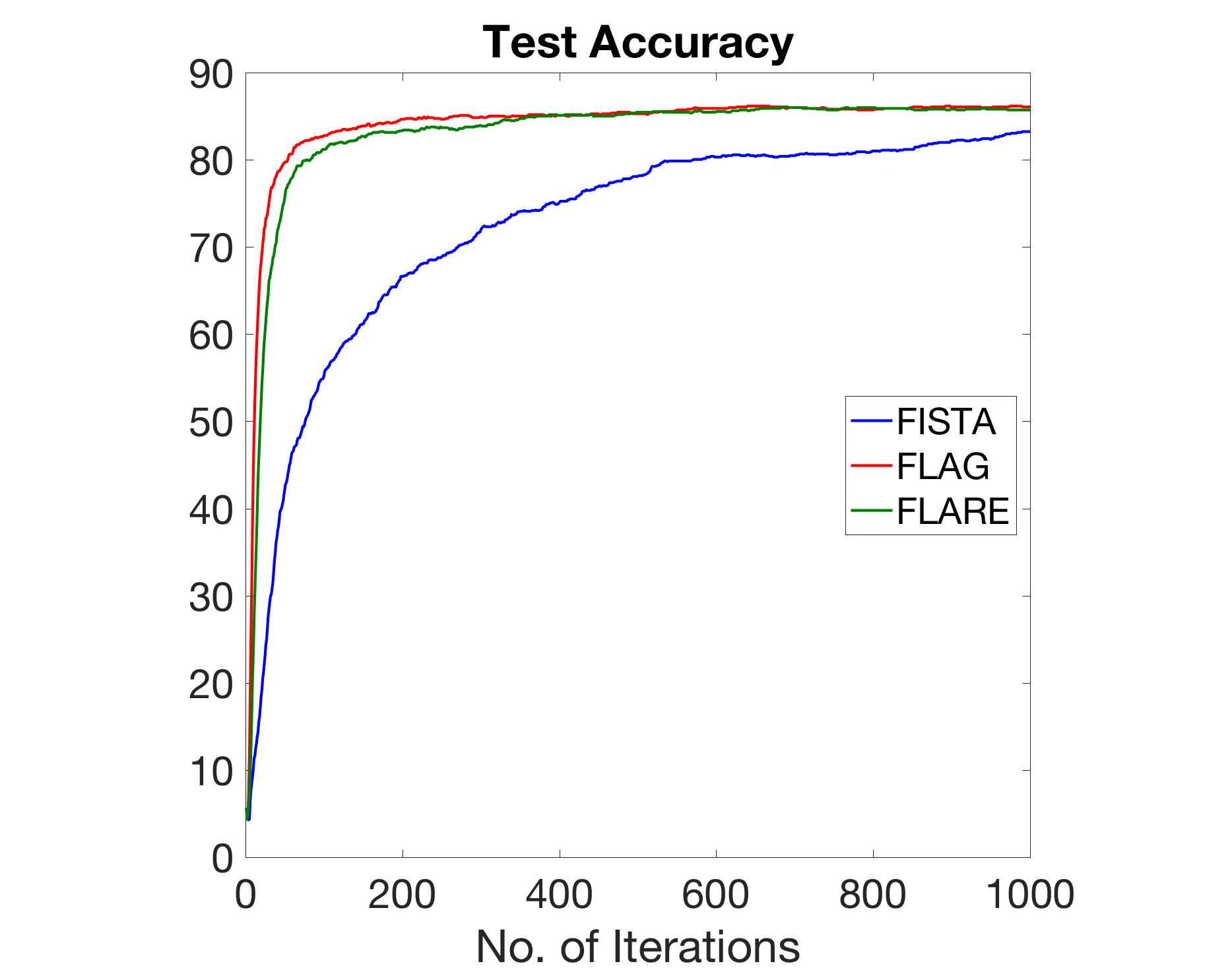}
}

\caption{FLAG, FLARE, and FISTA on box-constrained classification for the \texttt{20 Newsgroups} data set.}
\label{fig:20NewsIters}
\end{figure}
%\FloatBarrier

\begin{figure}[!htbp]
\centering
%HARUS
\subfigure{
\includegraphics[width=0.45\textwidth]{./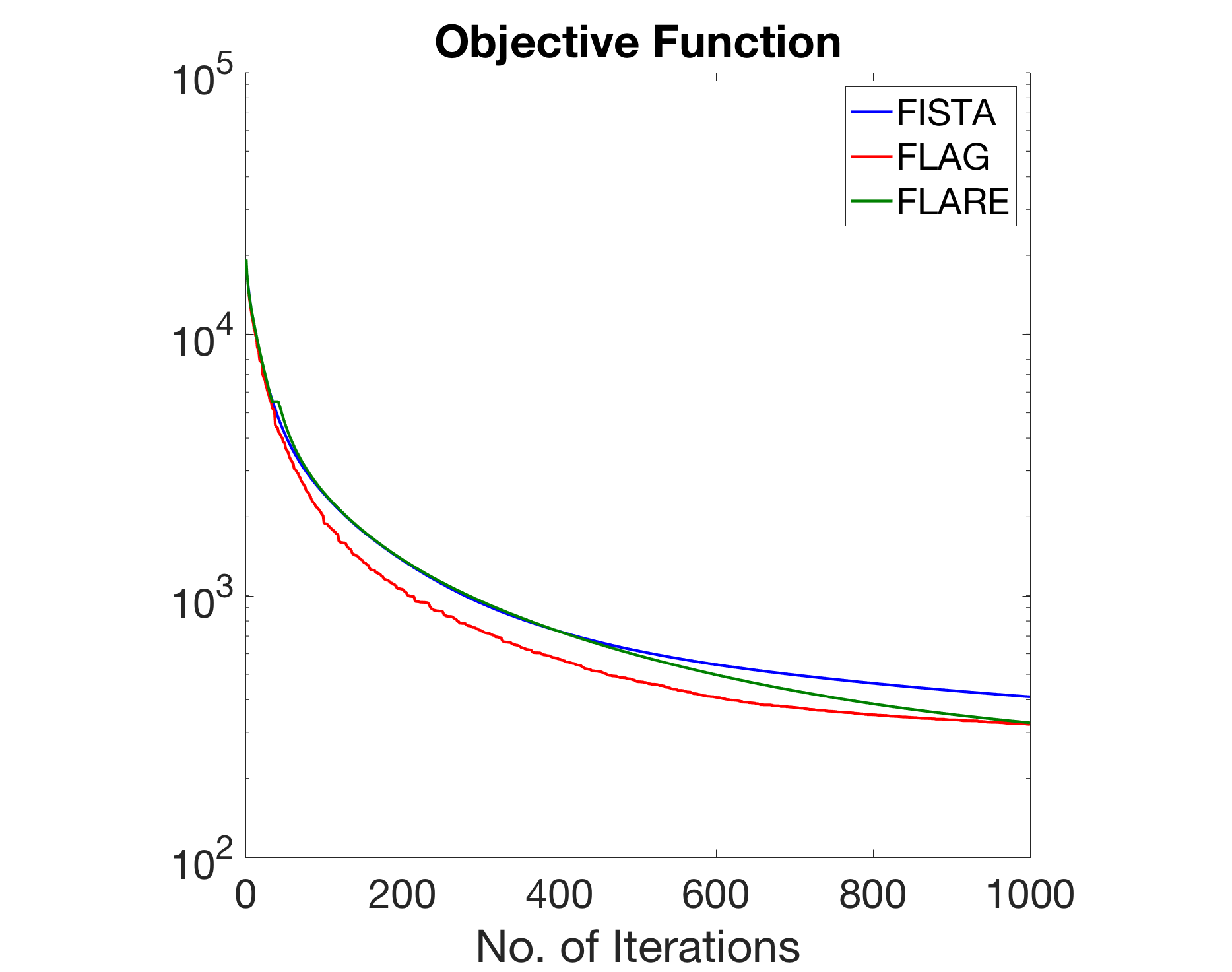}
\includegraphics[width=0.45\textwidth]{./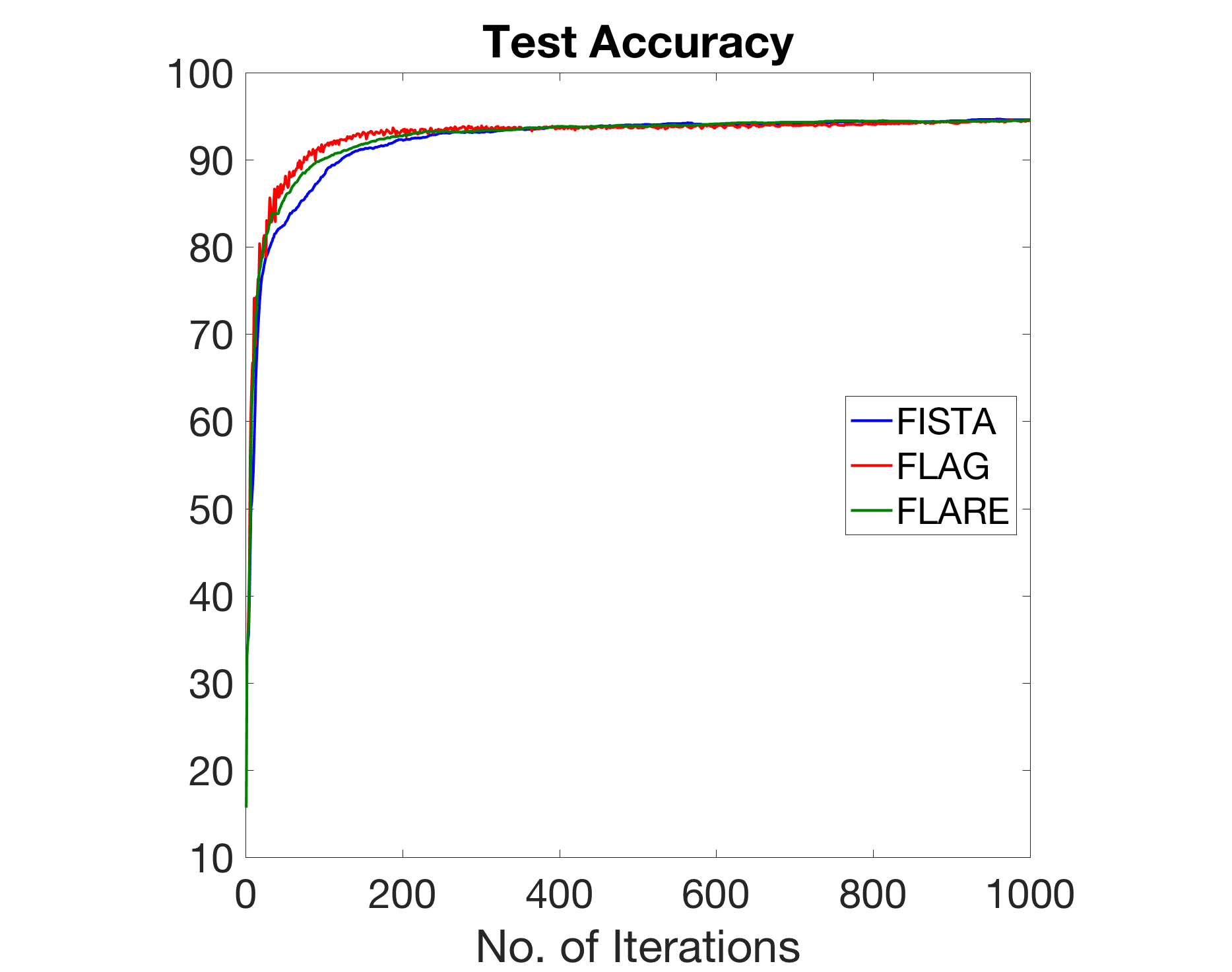}
}
\caption{FLAG, FLARE, and FISTA on box-constrained classification for the \texttt{HARUS} data set.}
\label{fig:HARUSIters}
\end{figure}
%\FloatBarrier

\begin{figure}[!htbp]
\centering
%Gisette
\subfigure{
\includegraphics[width=0.45\textwidth]{./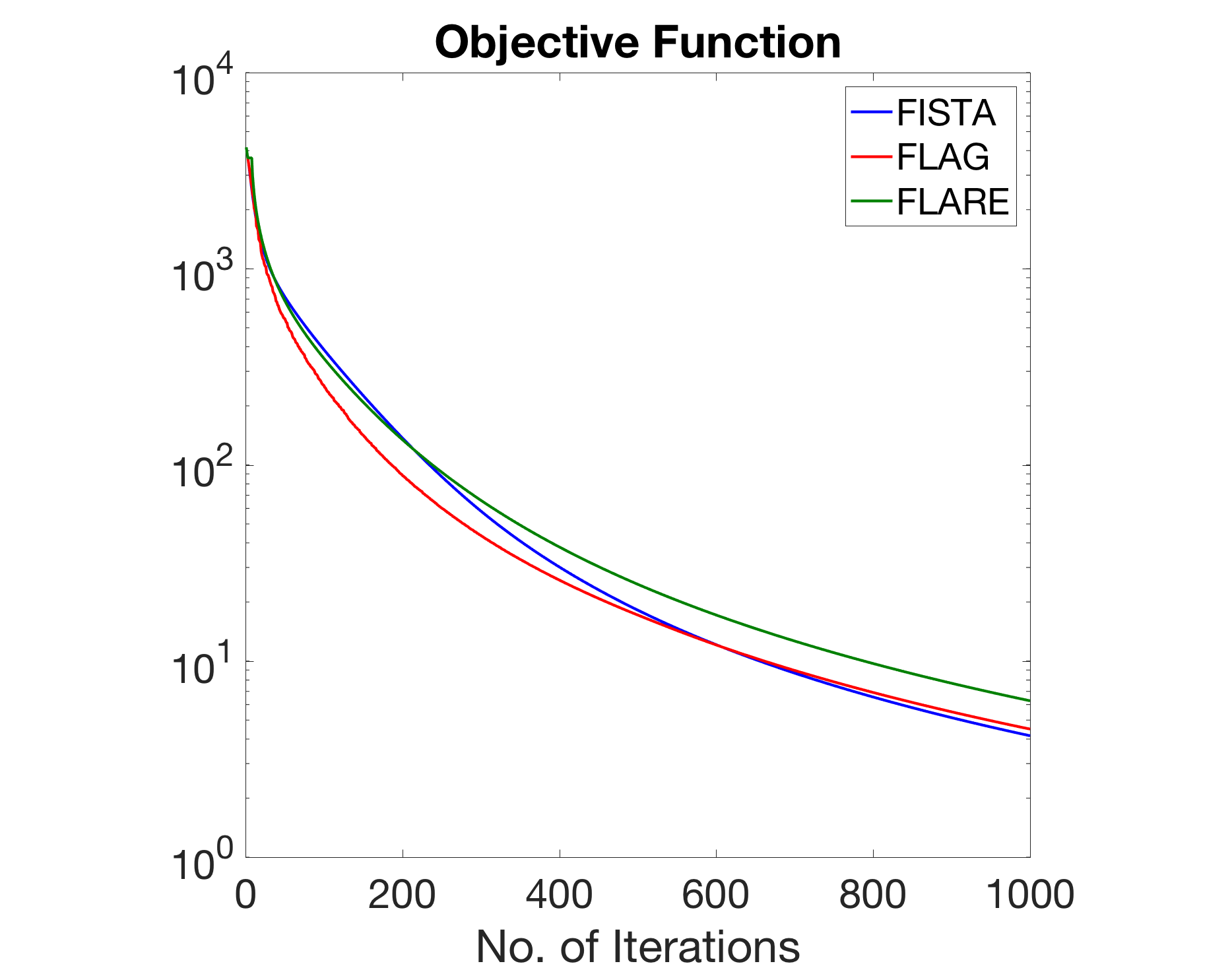}

\includegraphics[width=0.45\textwidth]{./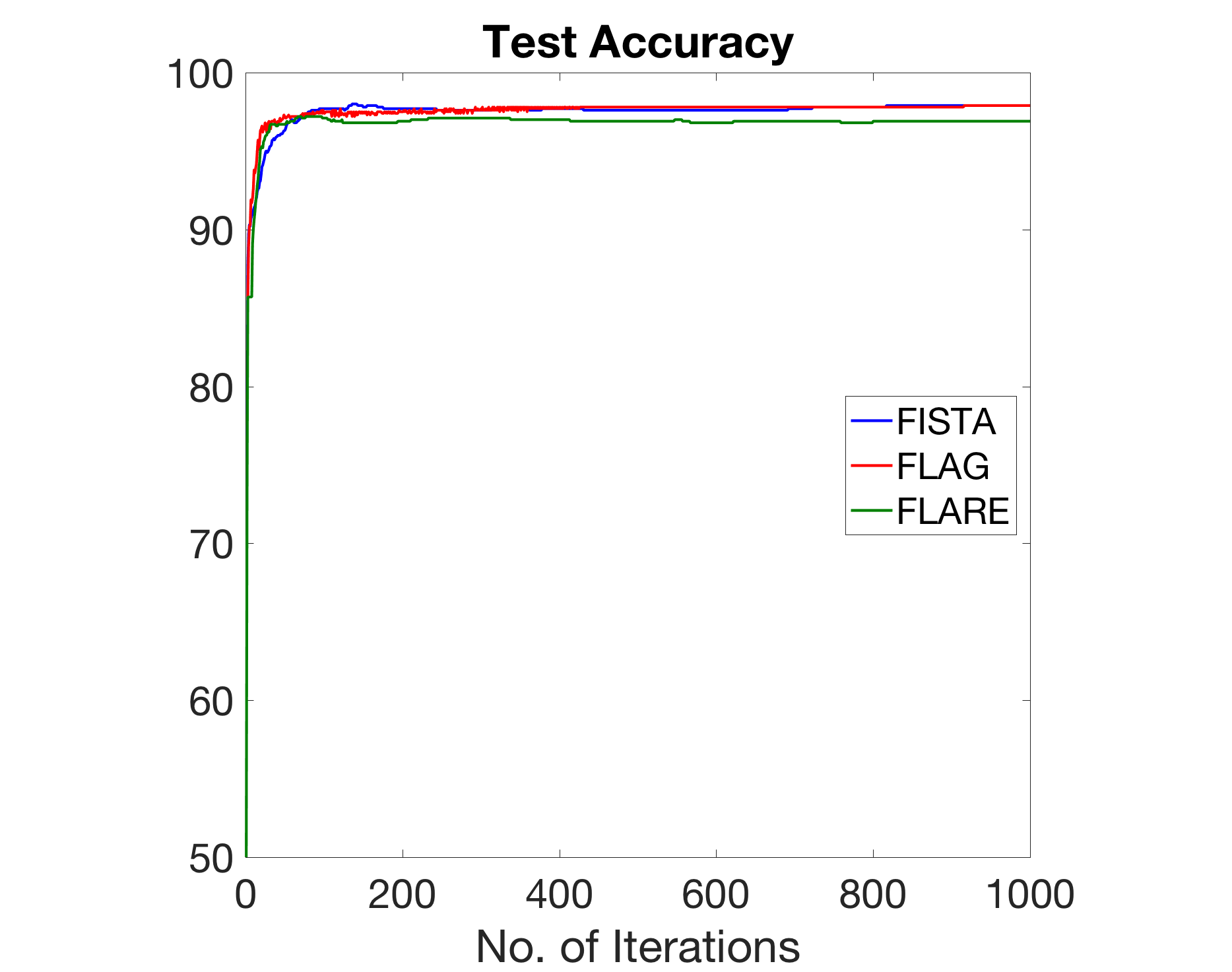}
}
\caption{FLAG, FLARE, and FISTA on $\ell_1$ regularized classification for the \texttt{Gisette} data set.}
\label{fig:GisetteIters}
\end{figure}
%\FloatBarrier

\begin{figure}[!htbp]
\centering
%Forest Covertype
\subfigure{
\includegraphics[width=0.45\textwidth]{./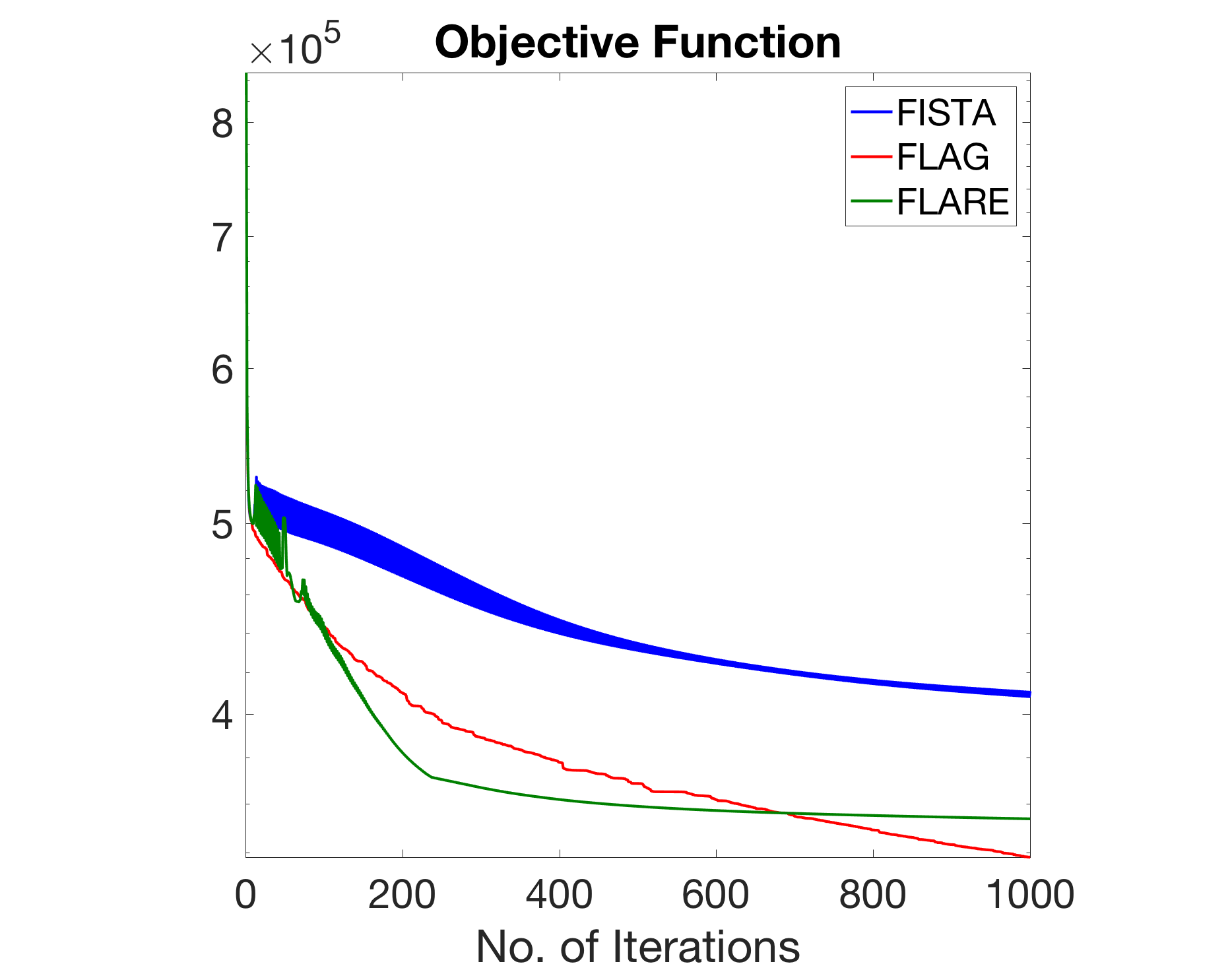}

\includegraphics[width=0.45\textwidth]{./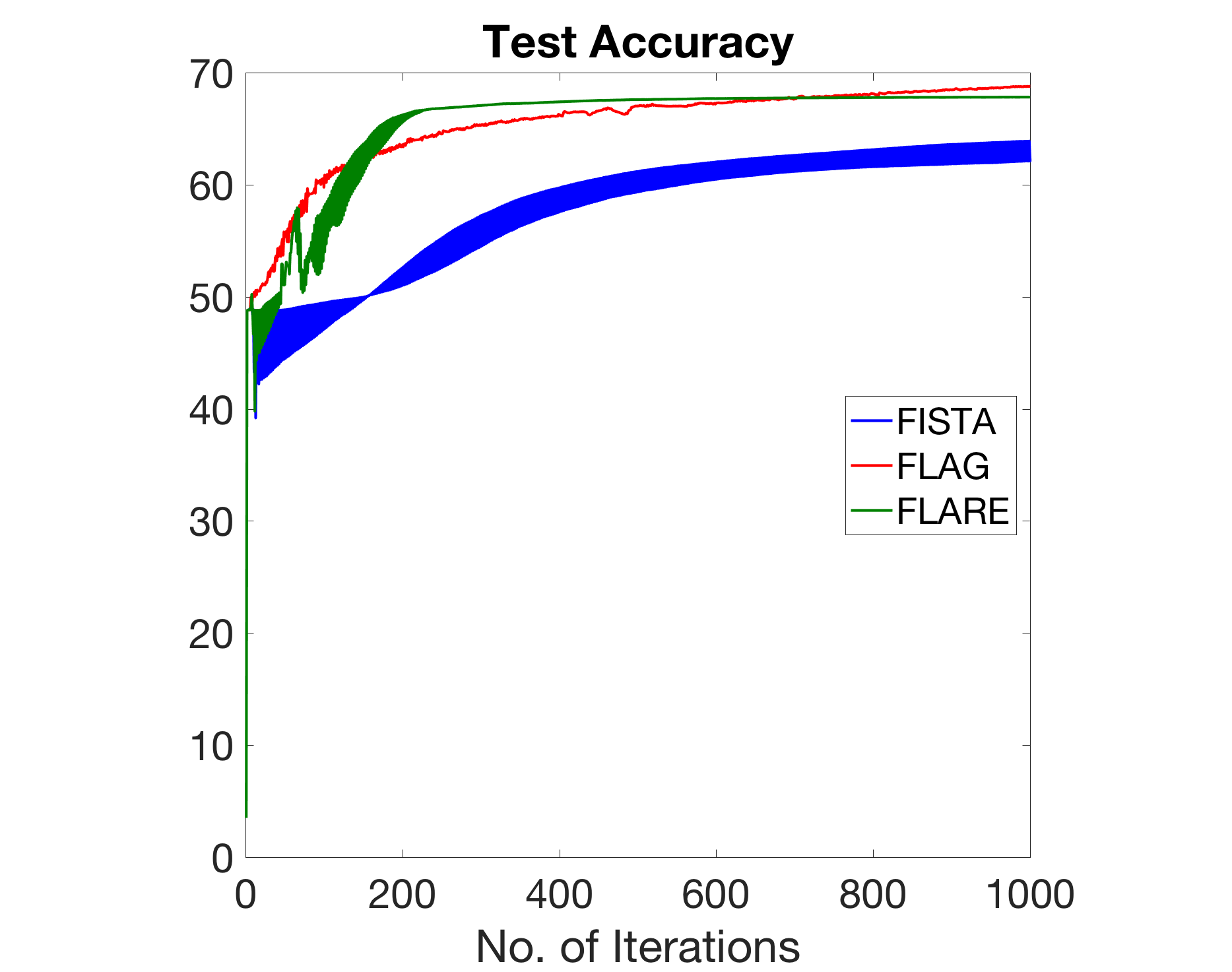}
}

\caption{FLAG, FLARE, and FISTA on $\ell_1$ regularized classification for the \texttt{Forest Covertype} data set.}
\label{fig:CoverTypeIters}
\end{figure}
%\FloatBarrier

We note that on the classification tasks, FLAG and FLARE perform as well as, or better than FISTA, as expected from the theoretical analysis. In particular, on the \texttt{20 Newsgroups} and \texttt{Forest Covertype} data sets, both FLAG and FLARE significantly outperform FISTA. 

For the regression task we also ran FLAG, FLARE, and FISTA for 1000 iterations. The data sets used are enumerated in Table~\ref{table:regression}. The per iteration loss and test error are displayed in Figures~\ref{fig:BlogIters} and~\ref{fig:FBCVDIters}.

\begin{figure}[!htbp]
\centering
\subfigure{
\includegraphics[width=0.45\textwidth]{./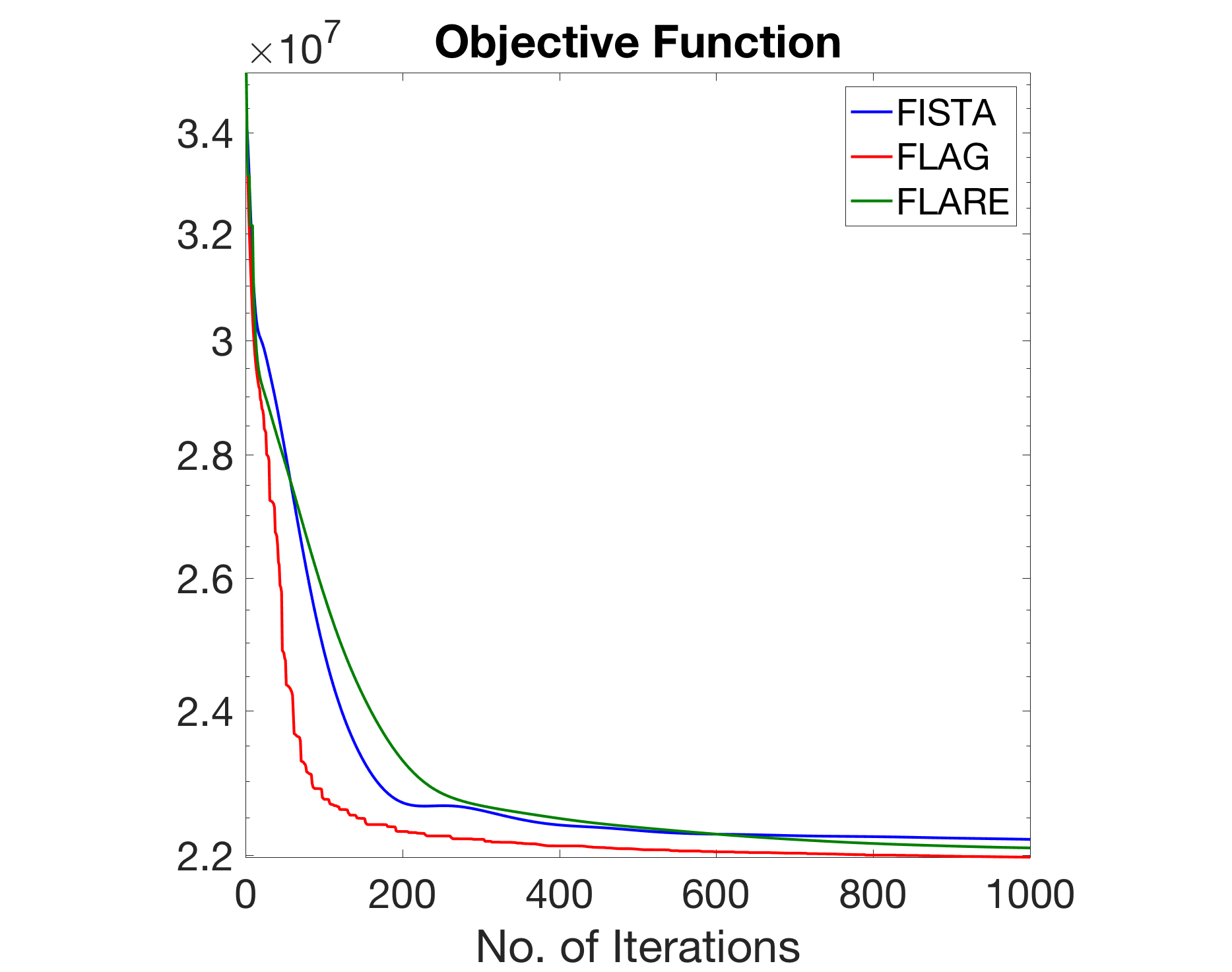}

\includegraphics[width=0.45\textwidth]{./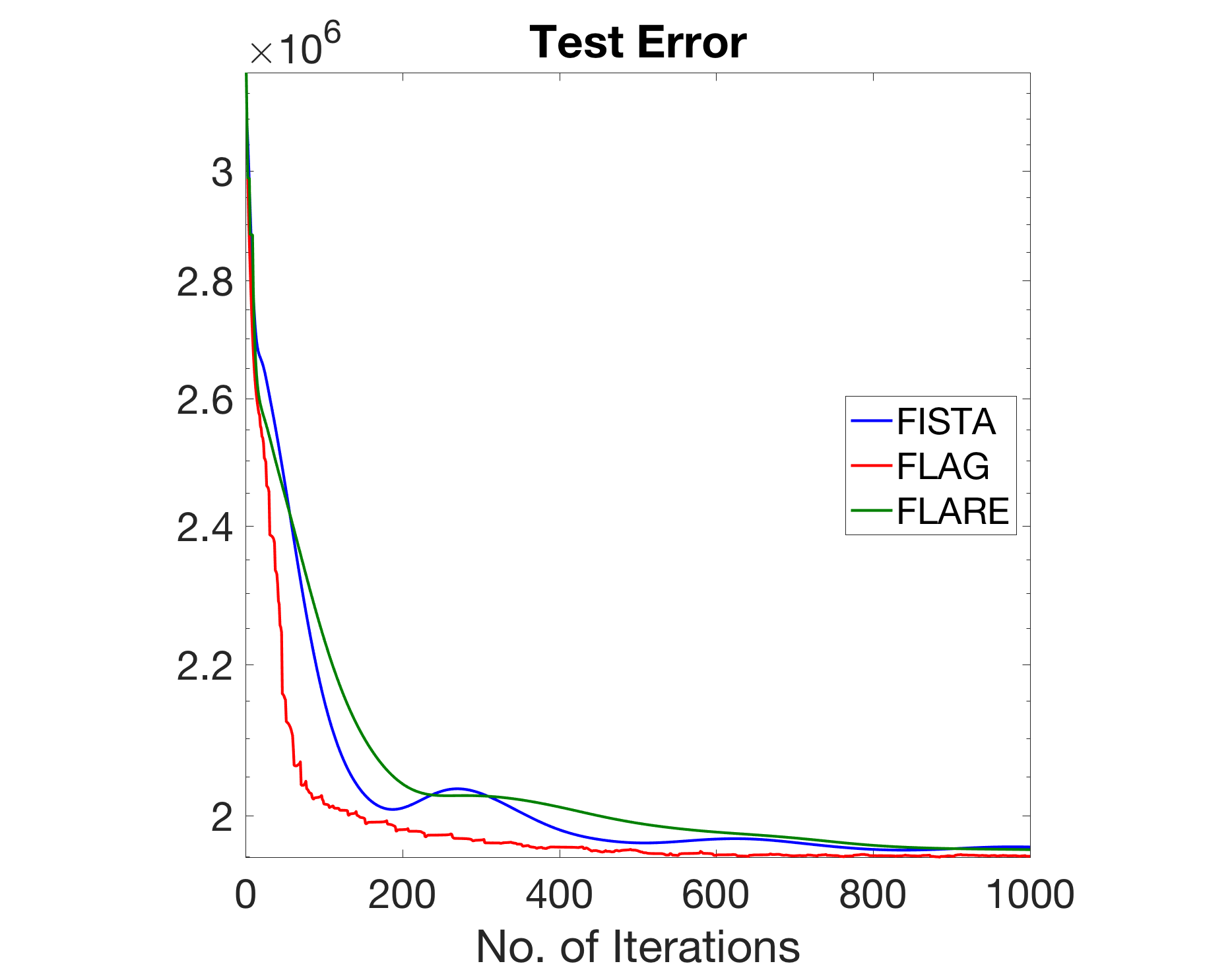}
}
\caption{FLAG, FLARE, and FISTA on box-constrained regression for the \texttt{BlogFeedback} data set.}
\label{fig:BlogIters}
\end{figure}
%\FloatBarrier

\begin{figure}[!htbp]
\centering
\subfigure{
\includegraphics[width=0.45\textwidth]{./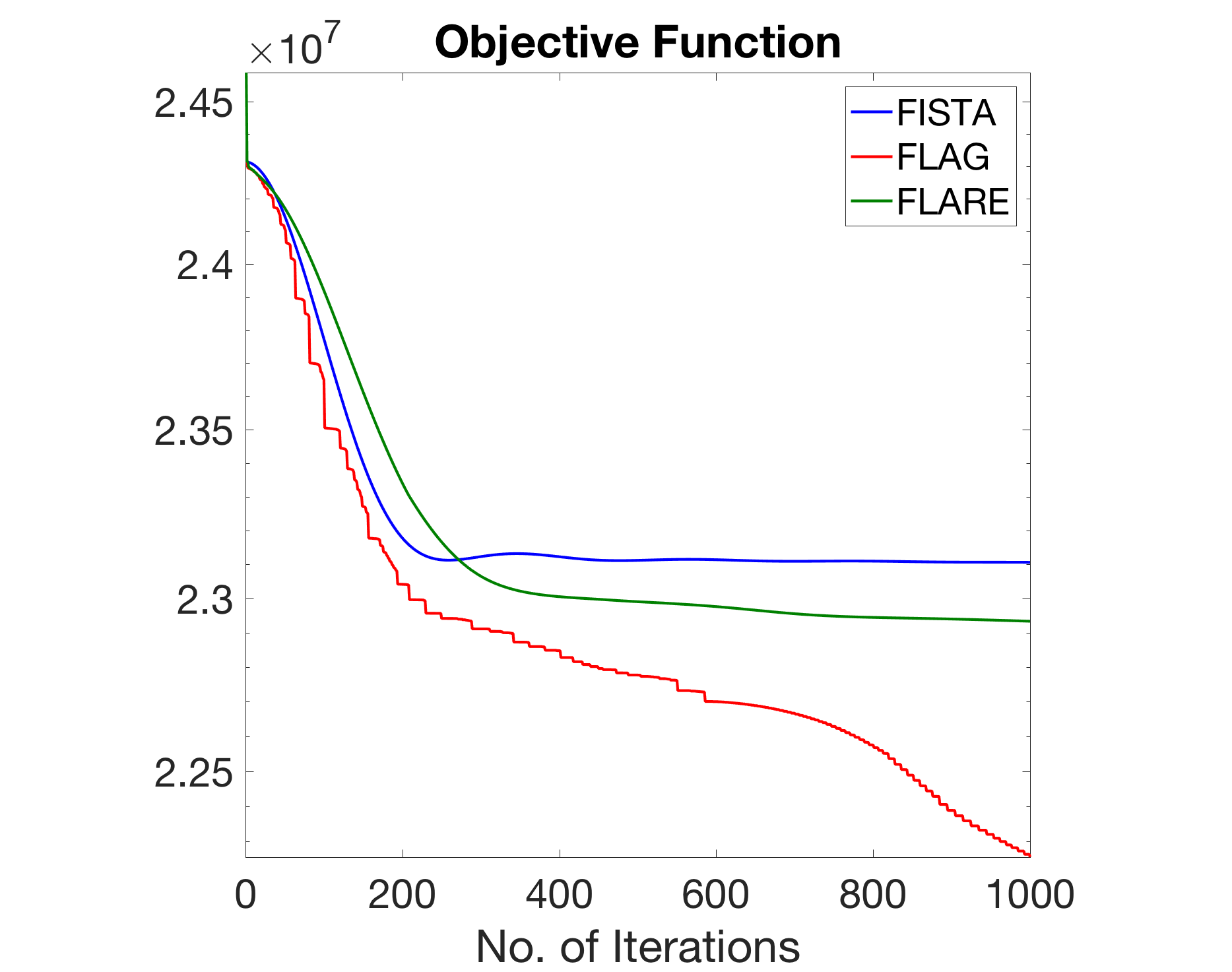}

\includegraphics[width=0.45\textwidth]{./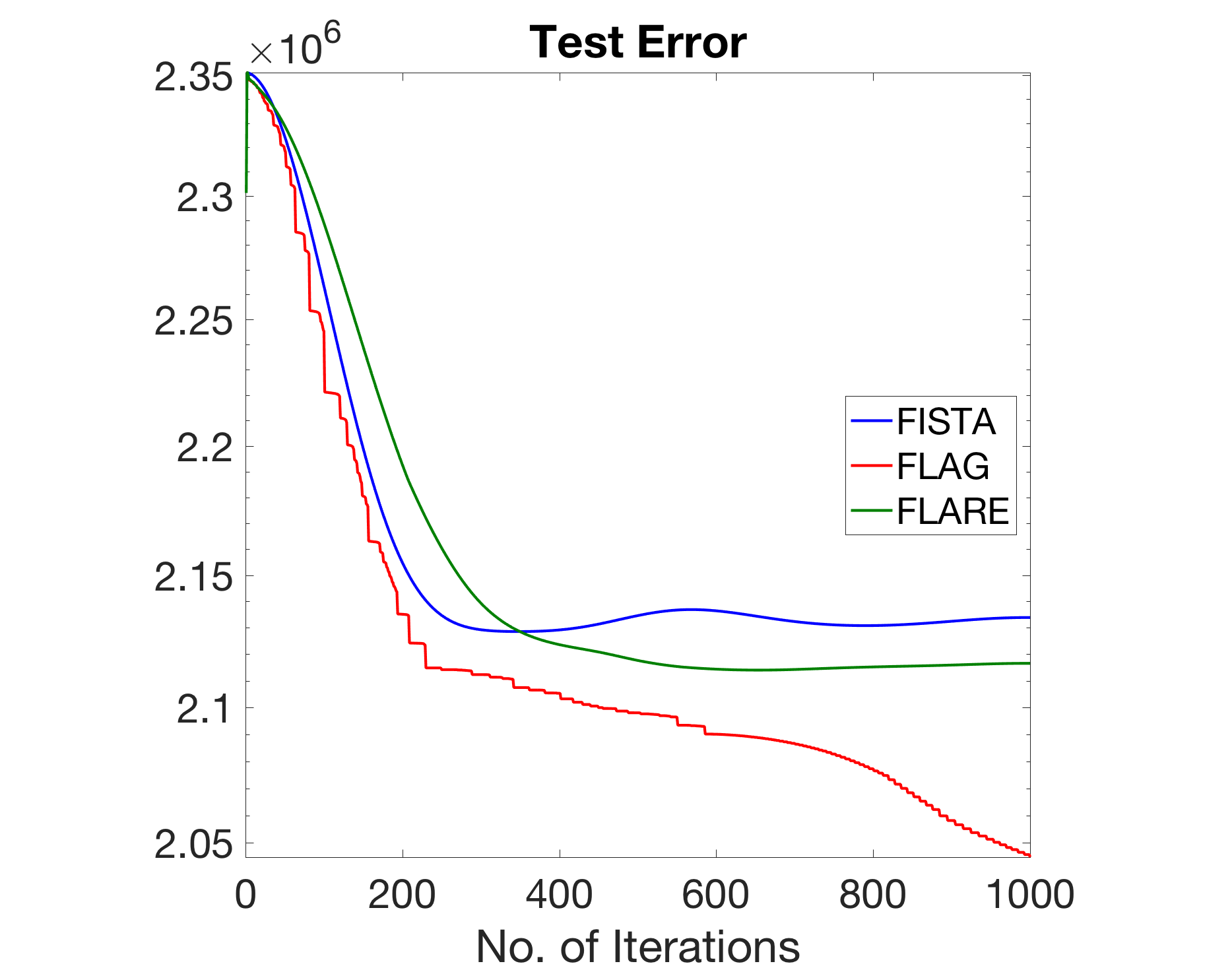}
}
\caption{FLAG, FLARE, and FISTA on $\ell_1$ regularized classification for the \texttt{Facebook CVD} data set.}
\label{fig:FBCVDIters}
\end{figure}
%\FloatBarrier

	Similarly to classification tasks, FLAG and FLARE perform as well as or superior to FISTA. Particularly on the \texttt{Facebook CVD} data set, FLAG significantly outperforms both FISTA and FLARE.
	
    As previously noted, each iteration of FLAG and FLARE can involve more $\prox$ evaluations than FISTA, which can dominate the run time. Thus, comparing the performance of these methods as measured by the number of $\prox$ evaluations is more representative of real world cost than that measured by iterations. We thus repeat the above experiments with the exception that this time we ran each algorithm for 1000 $\prox$ evaluations and tracked the loss and test accuracy versus the number of $\prox$ evaluations. The results of these trials are displayed in Figures~\ref{fig:20NewsProx} --~\ref{fig:FBCVDProx}.
    
It can be seen that, as measured by the number of $\prox$ evaluations, FLARE and FISTA can outperform FLAG due to the possibly significant number of $\prox$ evaluations involved in FLAG's ``BinarySearch'', i.e., Step~\ref{algstep:comb} of Algorithm~\ref{alg:flag}. For all examples, FLARE performs at least as well as FISTA with FLARE outperforming all other algorithms on certain datasets, e.g., Figures~\ref{fig:20NewsProx} and~\ref{fig:CoverTypeProx}. Empirically, after relaxing the ``BinarySearch'' in FLAG, FLARE continues to enjoy the performance advantages afforded by leveraging acceleration and adaptivity, while maintaining the low per-iteration cost of FISTA. 

\begin{figure}[!htbp]
%twenty news
\centering
\subfigure{
\includegraphics[width=0.45\textwidth]{./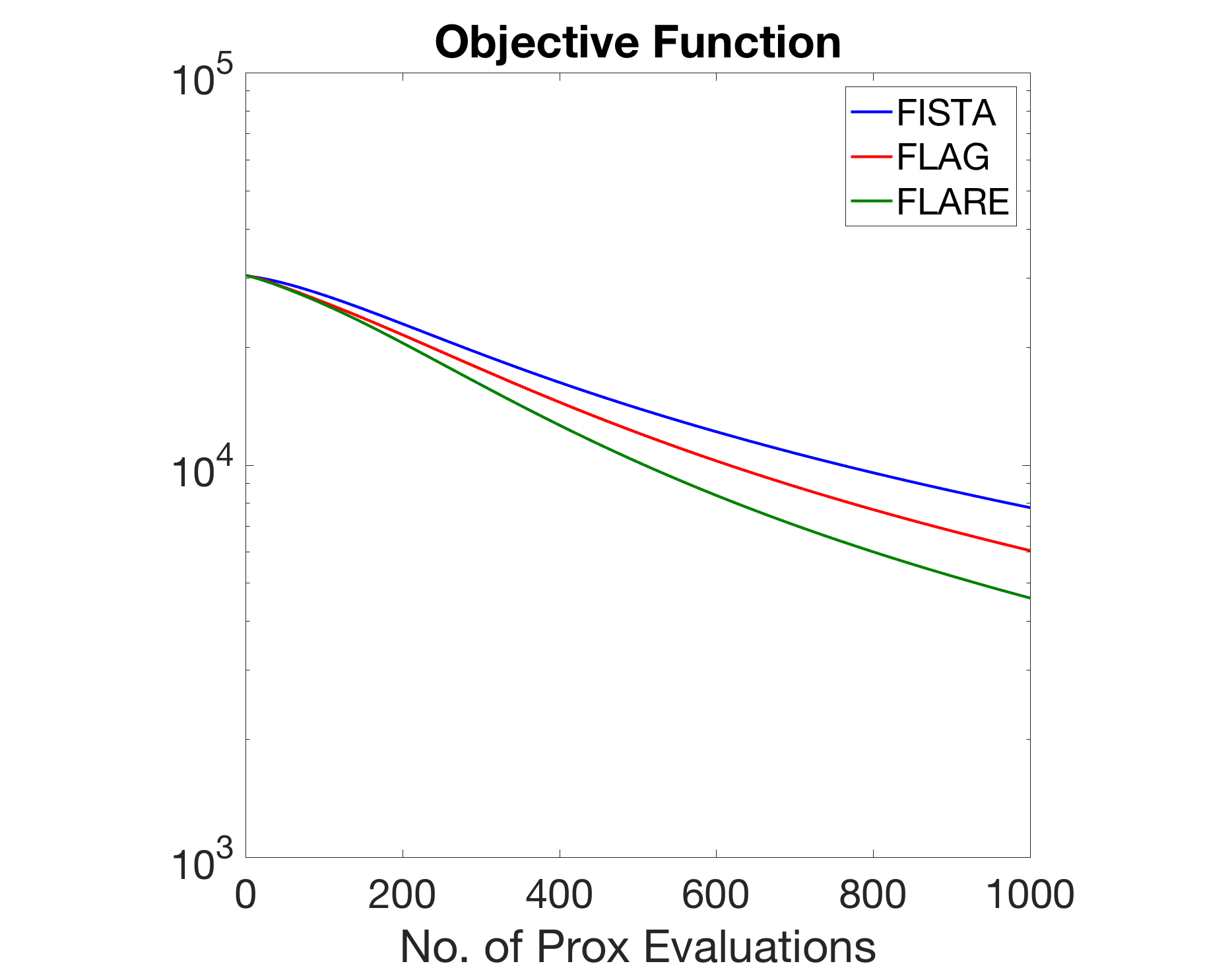}

\includegraphics[width=0.45\textwidth]{./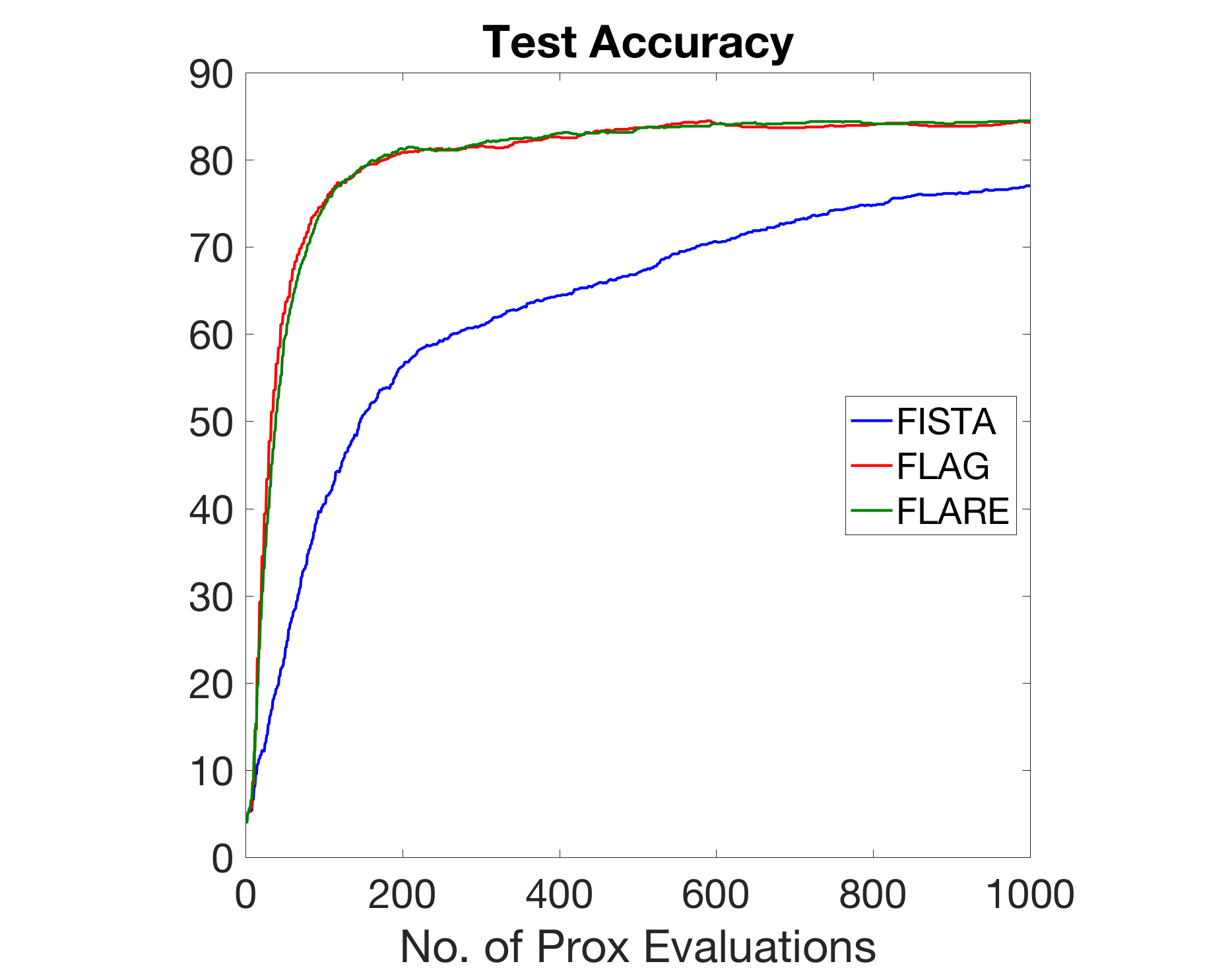}
}
\caption{FLAG, FLARE, and FISTA on box-constrained classification for the \texttt{20 Newsgroups} data set.}
\label{fig:20NewsProx}
\end{figure}
%\FloatBarrier

\begin{figure}[!htbp]
\centering
%HARUS
\subfigure{
\includegraphics[width=0.45\textwidth]{./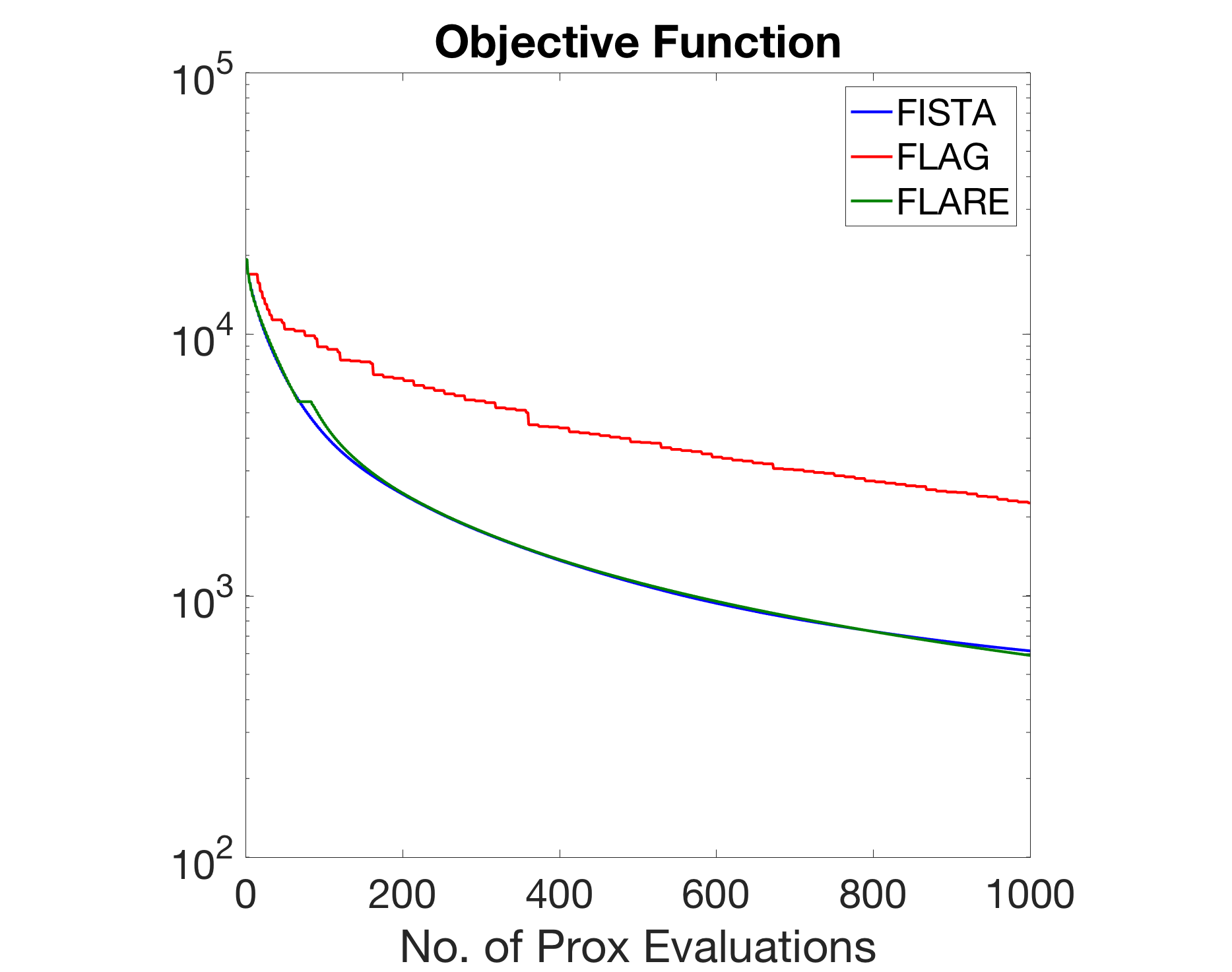}

\includegraphics[width=0.45\textwidth]{./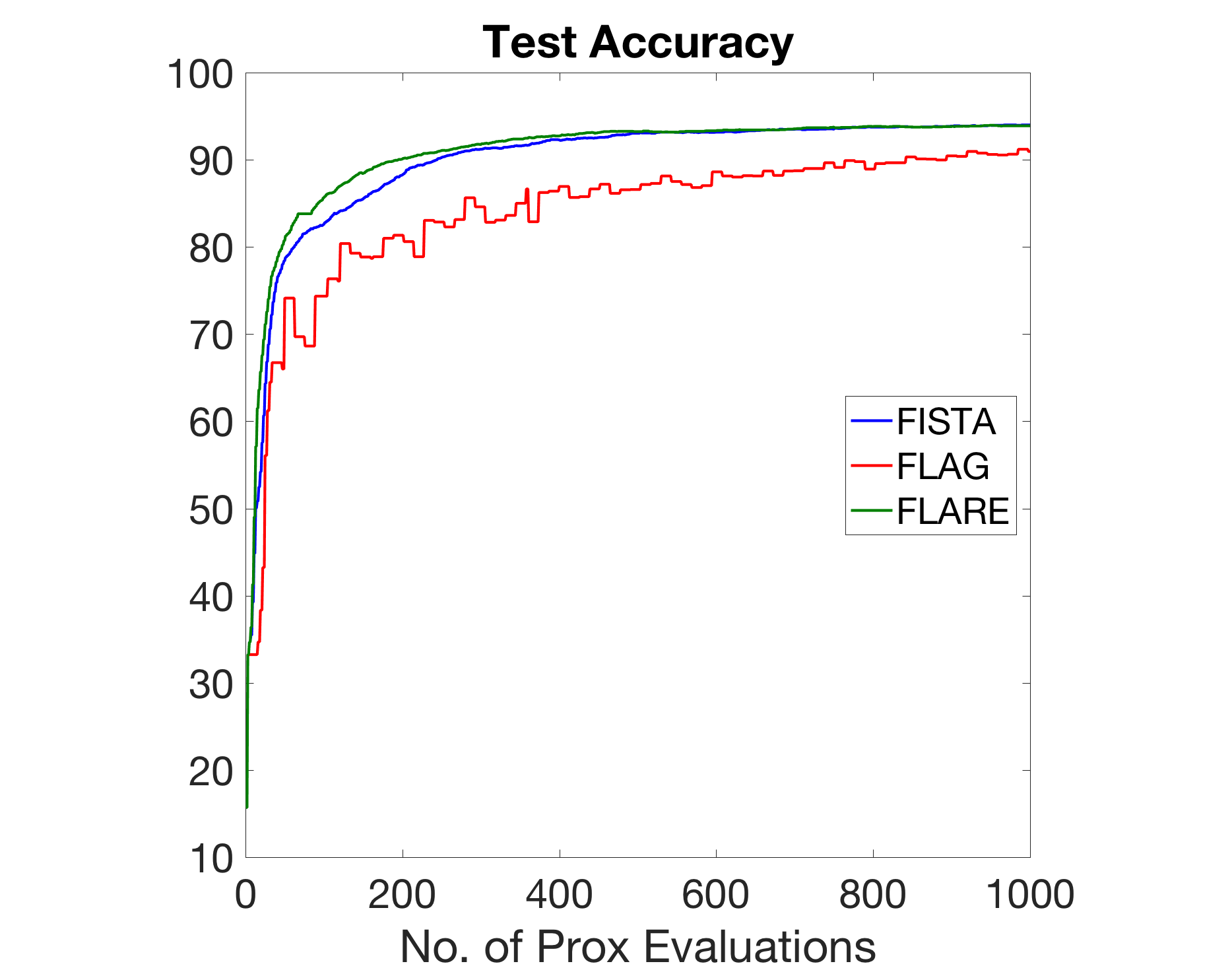}
}
\caption{FLAG, FLARE, and FISTA on box-constrained classification for the \texttt{HARUS} data set.}
\label{fig:HARUSProx}
\end{figure}
%\FloatBarrier

\begin{figure}[!htbp]
\centering
%Gisette
\subfigure{
\includegraphics[width=0.45\textwidth]{./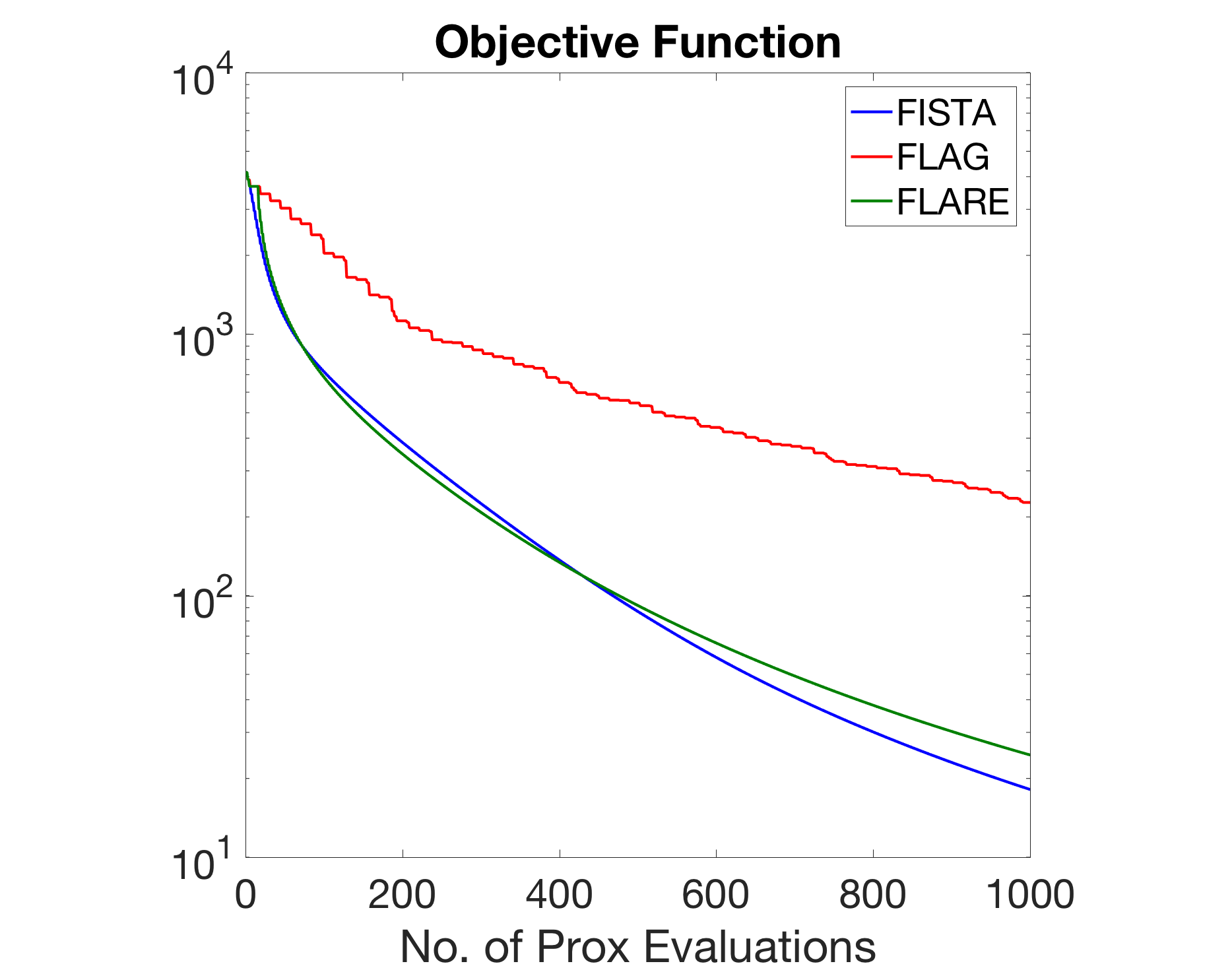}

\includegraphics[width=0.45\textwidth]{./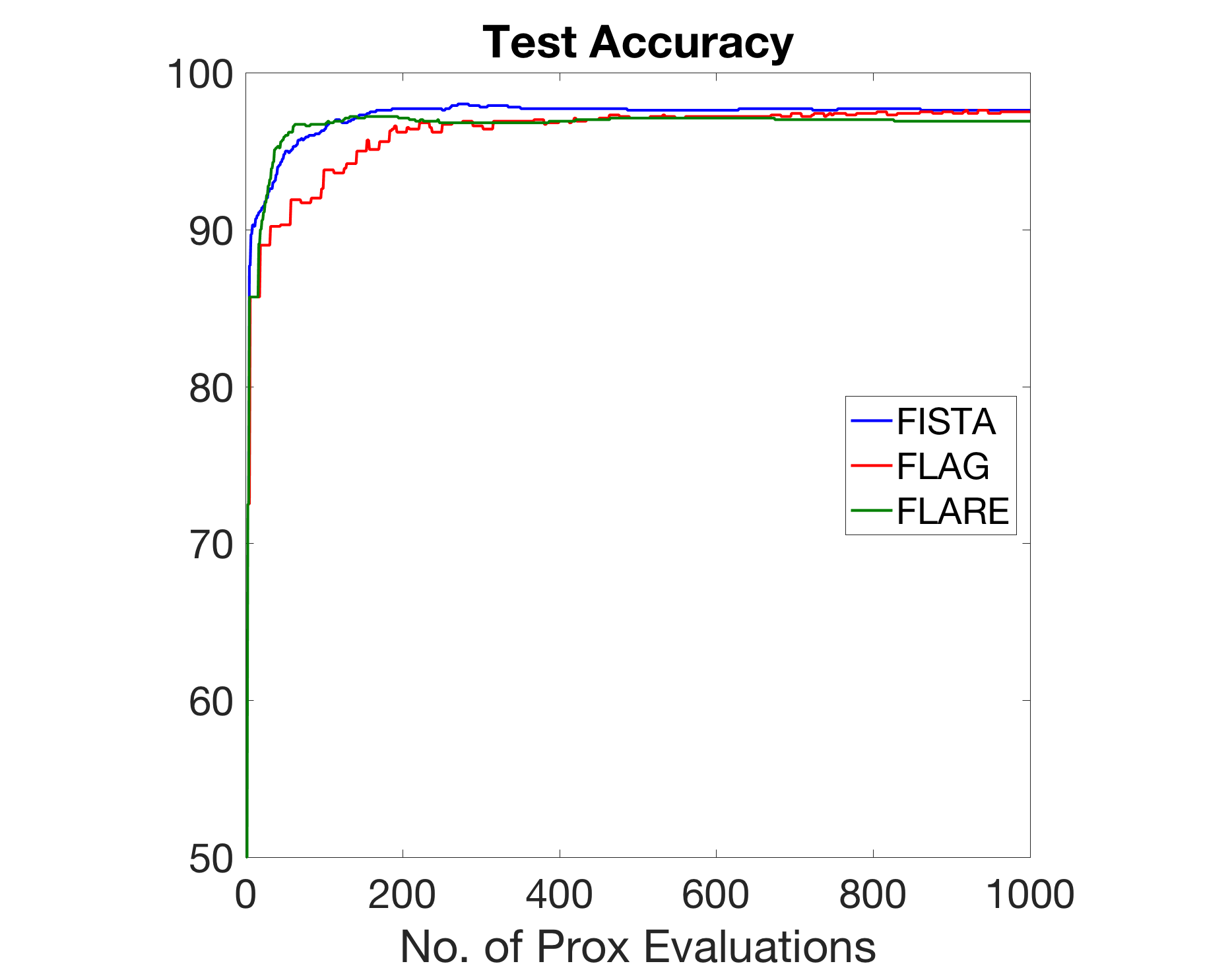}
}
\caption{FLAG, FLARE, and FISTA on $\ell_1$ regularized classification for the \texttt{Gisette} data set.}
\label{fig:GisetteProx}
\end{figure}
%\FloatBarrier

\begin{figure}[!htbp]
%Forest Covertype
\centering
\subfigure{
\includegraphics[width=0.45\textwidth]{./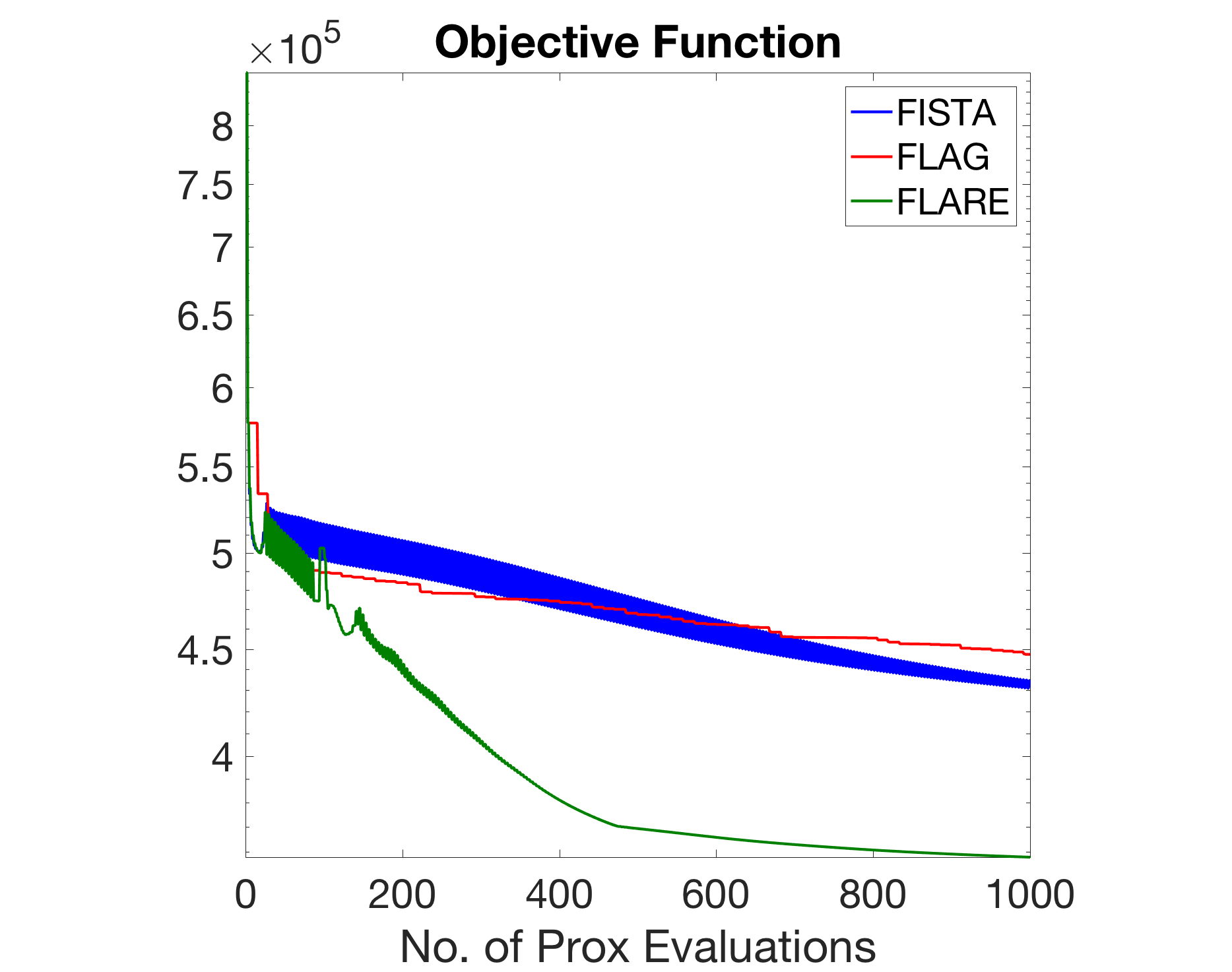}

\includegraphics[width=0.45\textwidth]{./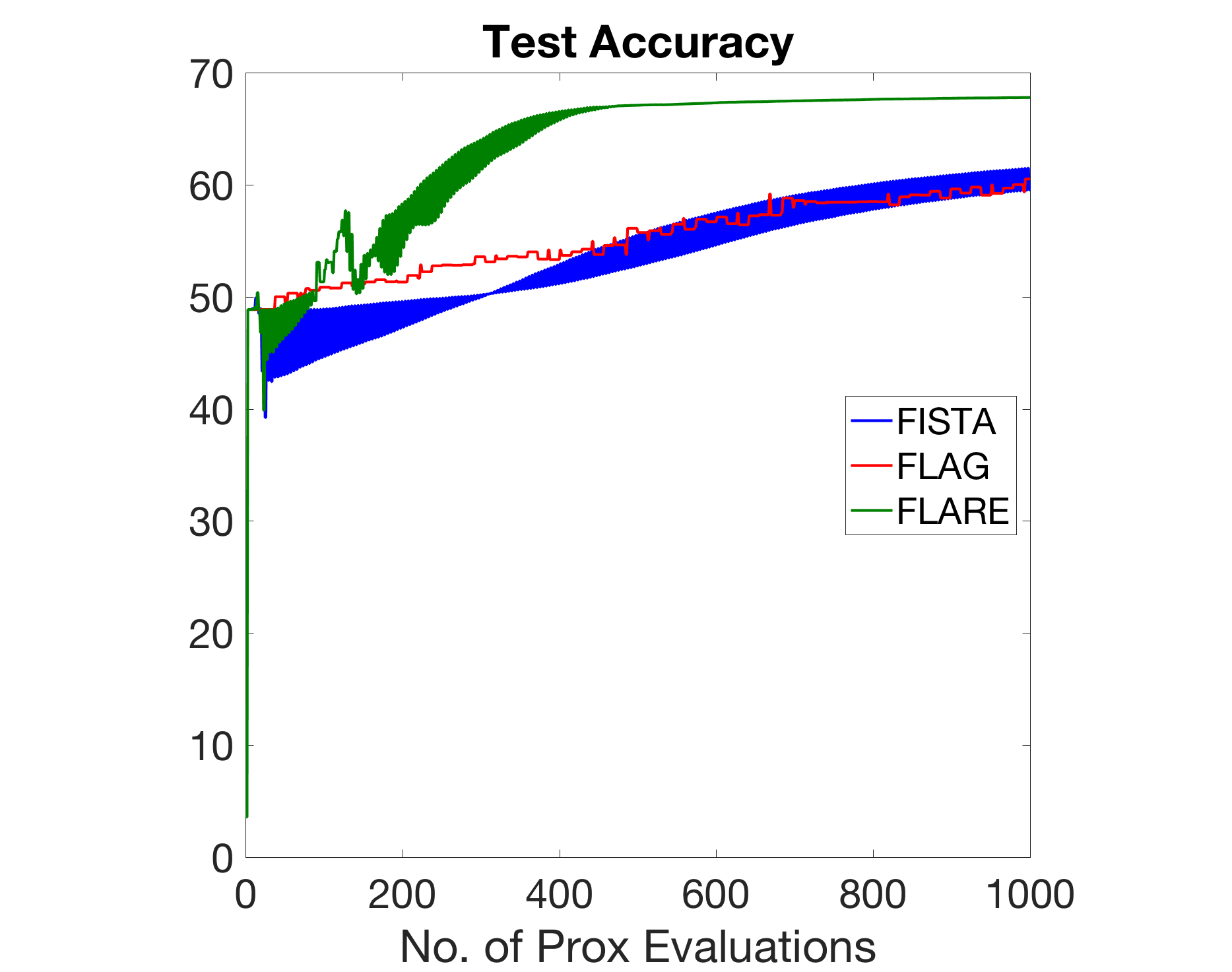}
}
\caption{FLAG, FLARE, and FISTA on $\ell_1$ regularized classification for the \texttt{Forest Covertype} data set.}
\label{fig:CoverTypeProx}
\end{figure}
%\FloatBarrier

\begin{figure}[!htbp]
\centering
\subfigure{
\includegraphics[width=0.45\textwidth]{./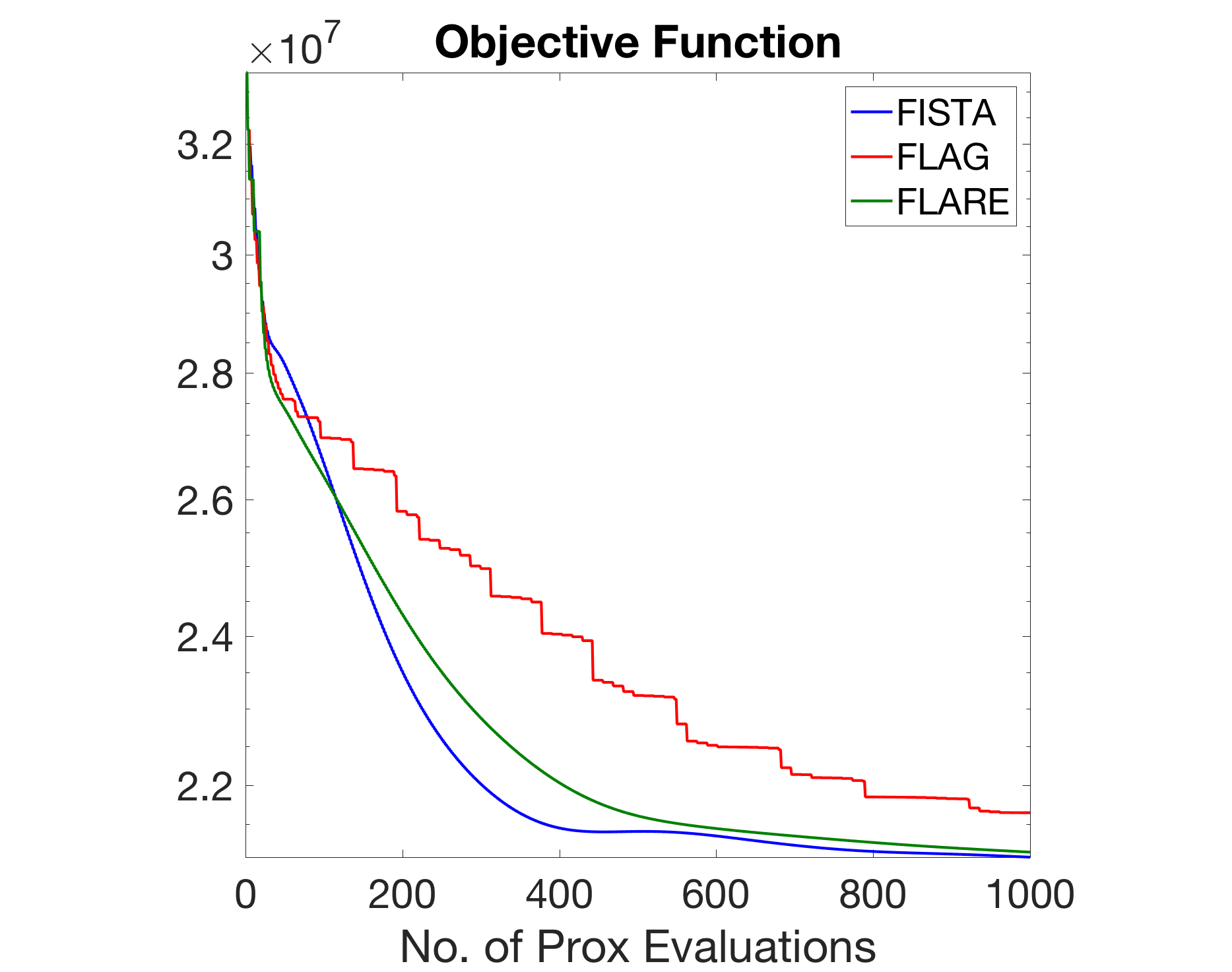}\label{fig: fb a}

\includegraphics[width=0.45\textwidth]{./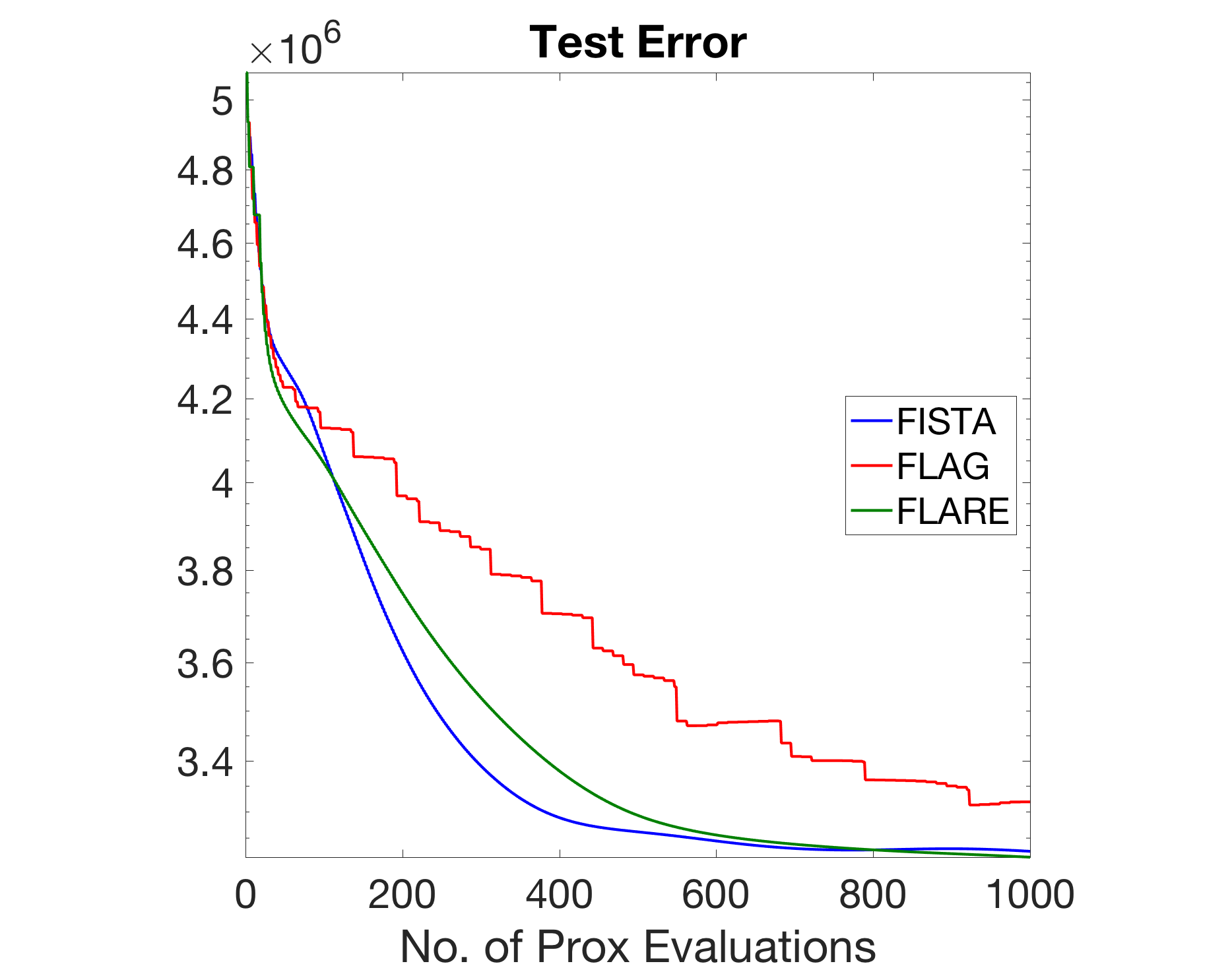}\label{fig: fb b}
}
\caption{FLAG, FLARE, and FISTA on box-constrained regression for the \texttt{BlogFeedback} data set.}
\label{fig:BlogProx}
\end{figure}
%\FloatBarrier

\begin{figure}[!htbp]
\centering
\subfigure{
\includegraphics[width=0.45\textwidth]{./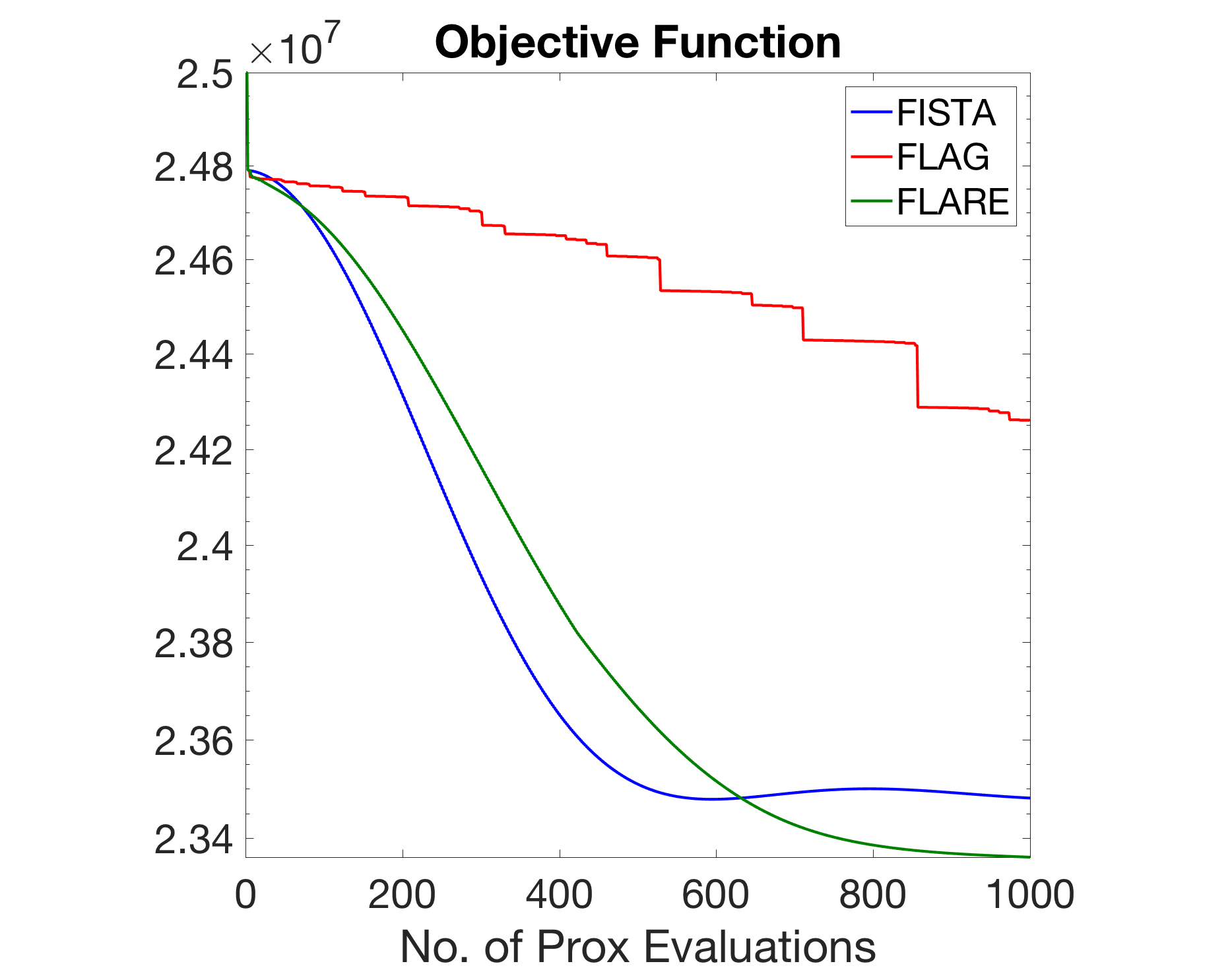}\label{fig: fb c}

\includegraphics[width=0.45\textwidth]{./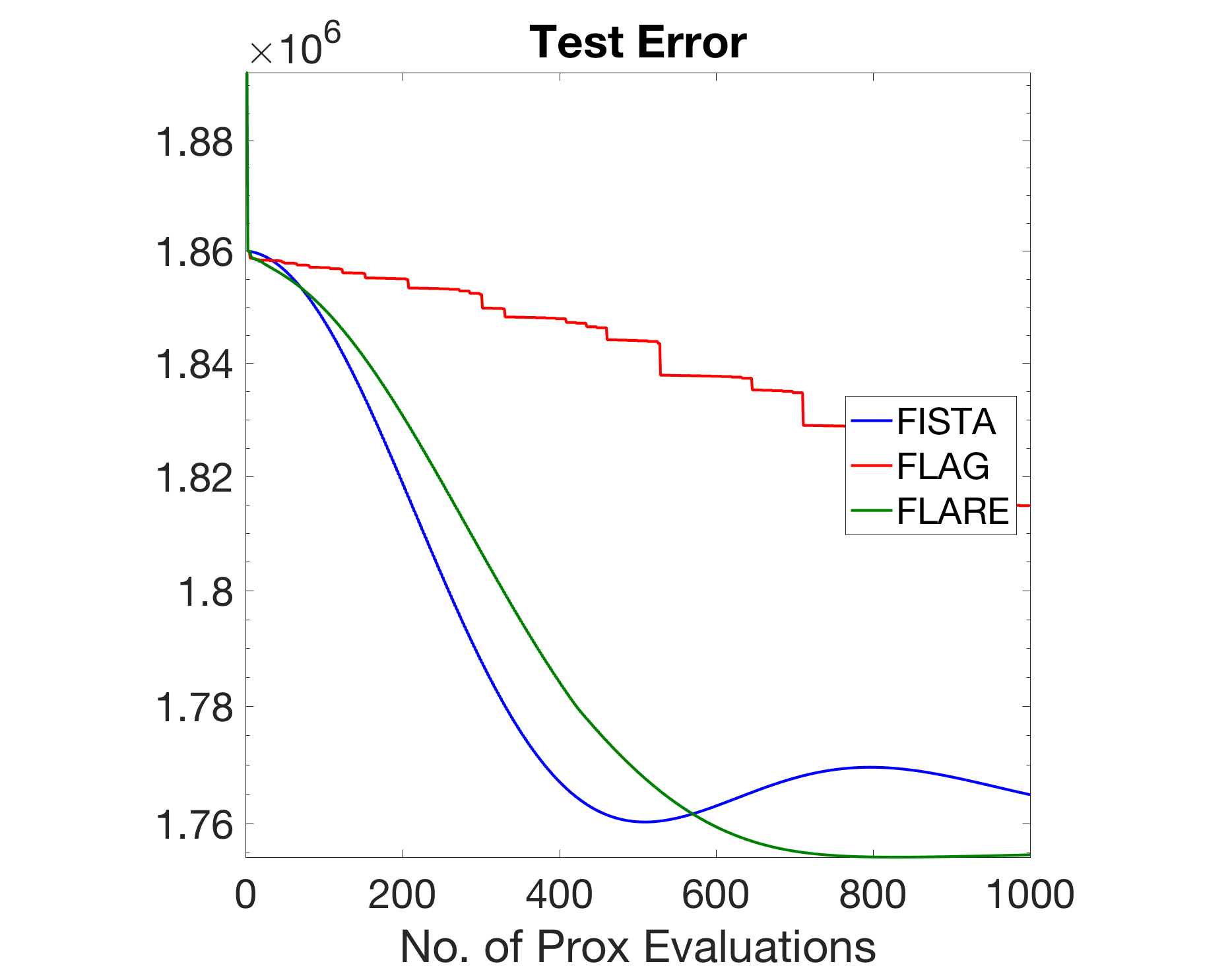}\label{fig: fb d}
}
\caption{FLAG, FLARE, and FISTA on $\ell_1$ regularized regression for the \texttt{Facebook CVD} data set.}
\label{fig:FBCVDProx}
\end{figure}
%\FloatBarrier

\section{Conclusions}
\label{sec:conclusions}
Following the advantages of employing acceleration, e.g., Nesterov's scheme, as well as adaptivity, e.g., Adagrad, here, we considered algorithms that can offer the best of both worlds. Specifically, in the context of composite optimization problem, we theoretically as well as empirically studied FLAG and its relaxation, FLARE, which can achieve this by a particular linear coupling of a simple gradient step with that of a properly scaled mirror update.

We showed that FLAG and FLARE can be equivalently regarded as adaptive versions of FISTA or alternatively, as accelerated versions of AdaGrad. In other words, like Nesterov's accelerated algorithm and its proximal variant, FISTA, our methods achieve the optimal convergence rate of $\mathcal{O}(1/k^2)$ and like AdaGrad our methods adaptively choose a regularizer, in a way that performs almost as well as the best choice of regularizer in hindsight. These two effects contribute to the improved theoretical properties and empirical performance of FLAG and FLARE compared to alternatives, e.g., FISTA. 

Recently, there has been some interesting development in studying the continuous-time limit of acceleration algorithms, e.g.,~\cite{su2014differential,krichene2015accelerated,wibisono2016variational}. In this light, studying adaptive regularization in the continuous time setting is an interesting direction for research, which we intend to pursue.
\section{Acknowledgment}
We would like to acknowledge ARO, DARPA, and NSF for providing partial support of
this work. We gratefully acknowledge the support of the NSF through grant IIS-1619362 and of the Australian Research Council through an Australian Laureate Fellowship (FL110100281) and through the Australian Research Council Centre of Excellence for Mathematical and Statistical Frontiers (ACEMS).

\bibliographystyle{plain} 
\bibliography{references}

\clearpage
\setboolean{@twoside}{false}
\appendix
\section{Proofs}
\label{sec:appendix}
We now give the details for the proof of our main results, i.e., Theorems~\ref{thm:final_flag} and~\ref{thm:final_flare}. Below, we outline the steps for the proof of FLAG's Theorem~\ref{thm:final_flag}. The proof of Theorem~\ref{thm:final_flare} for FLARE follows the same line of reasoning. Also, we note that, in what follows, lemmas/corollaries required for the proof of Theorem~\ref{thm:final_flare}, are given immediately after those of FLAG. 
\begin{enumerate}
	\item FLAG is essentially a combination of mirror descent and proximal gradient descent steps (Lemmas~\ref{lemma:grad_map} and~\ref{lem:mir}).
	
	\item $L_k$ in Algorithm~\ref{alg:flag} plays the role of an ``effective gradient Lipschitz constant'' in each iteration. The convergence rate of FLAG ultimately depends on $\sum_{k=1}^T L_k = L \sum_{k=1}^T \vg_{k}^{T} S_{k}^{-1} \vg_{k}$. (Lemma~\ref{lem:sum_bound} and Corollary~\ref{cor:maintheorem})
	
	\item By picking $S_{k}$ adaptively like in AdaGrad, we achieve a non-trivial upper bound for $\sum_{k=1}^T L_k$. (Lemma~\ref{lem:adareg})
	
	\item FLAG relies on picking an $\xx_k$ at each iteration that satisfies an inequality involving $L_k$ (Corollary \ref{cor:proxavg}). However, because $L_k$ is not known prior to picking $\xx_k$, we must choose an $\xx_k$ to roughly satisfy the inequality for all possible values of $L_k$. We do this by picking $\xx_k$ using binary search. (Lemmas~\ref{lem:proxcont} and~\ref{lem:binsearch}  and Corollary~\ref{cor:proxavg})
	
	\item Finally, we need to pick the right stepsize for each iteration. Our scheme is very similar to the one used in \cite{allen2014linear}, but generalized to handle a different $L_k$ each iteration. (Lemmas~\ref{lem:eta_k} and~\ref{lem:sum_bound} as well as Corollary \ref{cor:maintheorem}).
	
	\item Theorem~\ref{thm:mastertheorem} combines items 1, 2 and 4, above.  Finally, to prove Theorem~\ref{thm:final_flag}, we combine Theorem~\ref{thm:mastertheorem} with items 3 and 5 above.
\end{enumerate}

\subsection{Proof of Theorem~\ref{thm:final_flag} and Theorem \ref{thm:final_flare}}
\label{sec:proof_flag}
First, we obtain the following key result (similar to~\cite[Lemma 2.3]{beck2009fast}) regarding the vector
$\pp = -L(\prox(\xx) - \xx)$, as in Step~\ref{alg_step_grad_map} of FLAG, which is known as the
\textit{Gradient Mapping} of $F$ on $\C$. 

\begin{lemma}[Gradient Mapping]
	\label{lemma:grad_map}
	For any $\xx, \yy \in \C$, we have
    \begin{align*}
	F(\prox(\xx)) &\leq F(\yy)  + \lin{L (\prox(\xx) - \xx), \yy - \xx} \\
    &\quad - \frac{L}{2} \| \xx - \prox(\xx)\|_{2}^{2},
    \end{align*}
	where $\prox(\xx)$ is defined as in~\eqref{eq:prox}. In particular, $F(\prox(\xx)) \leq F(\xx)  - \frac{L}{2} \| \xx - \prox(\xx)\|_{2}^{2}$. 
\end{lemma}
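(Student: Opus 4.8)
The plan is to unfold the definition of $\prox(\xx)$ through the first-order optimality condition of the strongly convex subproblem in~\eqref{eq:prox}, and then combine this with convexity of $h$, convexity of $f$, and the $L$-smoothness of $f$; this is precisely the route taken in Lemma~2.3 of~\cite{beck2009fast}, which I would reproduce in self-contained form. Write $\yy^{+} \defeq \prox(\xx)$. Since $\yy^{+}$ minimizes $\vv \mapsto h(\vv) + \lin{\nabla f(\xx),\vv-\xx} + \hf L\|\vv-\xx\|_2^2$ over the convex set $\C$, there exists a subgradient $\gamma \in \partial h(\yy^{+})$ such that the variational inequality $\lin{\nabla f(\xx) + L(\yy^{+}-\xx) + \gamma,\; \vv - \yy^{+}} \geq 0$ holds for every $\vv \in \C$.

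Next I would upper bound $F(\yy^{+}) = f(\yy^{+}) + h(\yy^{+})$ using three standard inequalities: $L$-smoothness gives $f(\yy^{+}) \leq f(\xx) + \lin{\nabla f(\xx),\yy^{+}-\xx} + \hf L\|\yy^{+}-\xx\|_2^2$; convexity of $f$ gives $f(\xx) \leq f(\yy) - \lin{\nabla f(\xx),\yy-\xx}$; and convexity of $h$, applied with the subgradient $\gamma$ above, gives $h(\yy^{+}) \leq h(\yy) - \lin{\gamma,\yy-\yy^{+}}$. Adding these and regrouping the gradient terms yields
\begin{equation*}
F(\yy^{+}) \leq F(\yy) + \lin{\nabla f(\xx) + \gamma,\; \yy^{+}-\yy} + \hf L \|\yy^{+}-\xx\|_2^2 .
\end{equation*}
I would then plug $\vv = \yy$ into the variational inequality to obtain $\lin{\nabla f(\xx)+\gamma,\; \yy^{+}-\yy} \leq L\lin{\yy^{+}-\xx,\; \yy-\yy^{+}}$, and use the elementary identity $\lin{\yy^{+}-\xx,\;\yy-\yy^{+}} = \lin{\yy^{+}-\xx,\;\yy-\xx} - \|\yy^{+}-\xx\|_2^2$ to arrive at
\begin{equation*}
F(\yy^{+}) \leq F(\yy) + L\lin{\yy^{+}-\xx,\;\yy-\xx} - \hf L\|\yy^{+}-\xx\|_2^2 = F(\yy) + \lin{L(\prox(\xx)-\xx),\; \yy-\xx} - \frac{L}{2}\|\xx-\prox(\xx)\|_2^2 ,
\end{equation*}
which is exactly the claimed inequality; taking $\yy=\xx$ annihilates the inner-product term and recovers the ``in particular'' statement $F(\prox(\xx)) \leq F(\xx) - \tfrac{L}{2}\|\xx-\prox(\xx)\|_2^2$.

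The only genuine subtlety is stating the optimality condition for the prox subproblem correctly in the presence of the nonsmooth term $h$ and the constraint set $\C$: one must either fold the indicator $\iota_{\C}$ into $h$ or invoke the subdifferential sum rule for composite minimization to justify the existence of $\gamma \in \partial h(\yy^{+})$ satisfying the stated variational inequality. Once that is secured, the remainder is a routine assembly of the convexity and smoothness inequalities together with a single line of vector algebra, so I expect no further difficulty.
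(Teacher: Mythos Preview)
Your proposal is correct and follows essentially the same route as the paper's proof: both invoke the first-order optimality condition of the prox subproblem to obtain a subgradient $\gamma\in\partial h(\prox(\xx))$, then combine $L$-smoothness of $f$, convexity of $f$, and convexity of $h$ via $\gamma$ to reach the claimed bound. The only cosmetic difference is the order in which the pieces are assembled (the paper first isolates $\lin{\nabla f(\xx),\prox(\xx)-\xx}$ from the optimality inequality and substitutes it into the smoothness bound, whereas you first sum the three convexity/smoothness inequalities and then apply the optimality condition); your flagging of the subdifferential sum rule for $h+\iota_{\C}$ is, if anything, more careful than the paper's treatment.
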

\begin{proof}[Proof of Lemma \ref{lemma:grad_map}]
	This result is the same as Lemma 2.3 in~\cite{beck2009fast}. We bring its proof here for completeness.
	
	For any $\yy \in \C$, any sub-gradient, $\vv$, of $h$ at $\prox(\xx)$, i.e., $\vv \in \partial h(\prox(\xx))$, and by optimality of $\prox(\xx)$ in~\eqref{eq:prox}, we  have
	\begin{align*}
	0 & \leq \lin{\nabla f(\xx) + \vv + L (\prox(\xx) - \xx), \yy - \prox(\xx)} \\
	& = \lin{\nabla f(\xx) + \vv + L (\prox(\xx) - \xx), \yy - \xx} + \lin{\nabla f(\xx) \\
    & \quad + \vv + L (\prox(\xx) - \xx), \xx - \prox(\xx)},
	\end{align*}
	and so
	\begin{align*}
	&\lin{\nabla f(\xx) , \prox(\xx) - \xx} \\
    &\leq \lin{\nabla f(\xx) + \vv + L (\prox(\xx) - \xx), \yy - \xx} \\
	& \quad + \lin{ \vv, \xx - \prox(\xx)} - L \| \xx - \prox(\xx)\|_{2}^{2},
	\end{align*}
	Now from $L$-Lipschitz continuity of $\nabla f$ as well as convexity of $f$ and $h$, we get
	\begin{align*}
	&F(\prox(\xx)) \\
    &= f(\prox(\xx)) + h(\prox(\xx)) \\
	& \leq f(\xx)  + \lin{\nabla f(\xx), \prox(\xx)  -  \xx } \\
    &\quad + \frac{L}{2} \| \prox(\xx) - \xx \|_{2}^2 + h(\prox(\xx)) \\
	& \leq f(\xx)  + \lin{\nabla f(\xx) + \vv + L (\prox(\xx) - \xx), \yy - \xx} \\
	& \quad + \lin{ \vv, \xx - \prox(\xx)} - \frac{L}{2} \| \xx - \prox(\xx)\|_{2}^{2} \\
    &\quad +  h(\prox(\xx)) \\
	& \leq f(\yy)  + \lin{\vv + L (\prox(\xx) - \xx), \yy - \xx} \\
	& \quad + \lin{ \vv, \xx - \prox(\xx)} - \frac{L}{2} \| \xx - \prox(\xx)\|_{2}^{2}\\
    &\quad  +  h(\prox(\xx)) \\
	& = f(\yy)  + \lin{L (\prox(\xx) - \xx), \yy - \xx} \\
	& \quad + \lin{\vv, \yy - \prox(\xx)} - \frac{L}{2} \| \xx - \prox(\xx)\|_{2}^{2}  \\
    &\quad +  h(\prox(\xx))\\
	& \leq F(\yy)  + \lin{L (\prox(\xx) - \xx), \yy - \xx} \\
    &\quad - \frac{L}{2} \| \xx - \prox(\xx)\|_{2}^{2}.
	\end{align*}
	\qed
\end{proof}

The following lemma establishes the Lipschitz continuity of the $\prox$ operator.
\begin{lemma}[Prox Operator Continuity]
	\label{lem:proxcont}
	$\prox:\reals^{d} \rightarrow \reals^{d}$ is a $2$-Lipschitz continuous, that is, for any $\xx, \yy \in \C$, we have
	$$\|\prox (\xx) - \prox (\yy)\|_{2}\leq 2 \|\xx-\yy\|_{2}.$$
\end{lemma}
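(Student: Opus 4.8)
The plan is to derive the bound from the first-order optimality conditions that characterize $\prox(\xx)$, combined with monotonicity of the subdifferential and the $L$-Lipschitz gradient assumption~\eqref{eq:Lip}. Write $\xx^{+} \defeq \prox(\xx)$ and $\yy^{+} \defeq \prox(\yy)$. By the definition~\eqref{eq:prox}, $\xx^{+}$ minimizes the closed proper convex function $\zz \mapsto h(\zz) + \lin{\nabla f(\xx),\zz-\xx} + \frac{L}{2}\|\zz-\xx\|_2^2$ over $\C$, so its optimality condition is the inclusion $-\big(\nabla f(\xx) + L(\xx^{+}-\xx)\big) \in \partial h(\xx^{+}) + N_{\C}(\xx^{+})$, where $N_{\C}$ denotes the normal cone of $\C$; the analogous inclusion holds at $\yy^{+}$. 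In other words, the vectors $-\nabla f(\xx) - L(\xx^{+}-\xx)$ and $-\nabla f(\yy) - L(\yy^{+}-\yy)$ lie in $\big(\partial h + N_{\C}\big)(\xx^{+})$ and $\big(\partial h + N_{\C}\big)(\yy^{+})$ respectively.

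Next I would use that $\partial h + N_{\C}$ is a monotone operator (being the sum of the subdifferentials of two closed proper convex functions, $h$ and the indicator of $\C$). Pairing the difference of the two selected elements against $\xx^{+}-\yy^{+}$ gives $\lin{ -\nabla f(\xx)+\nabla f(\yy) - L(\xx^{+}-\yy^{+}) + L(\xx-\yy),\ \xx^{+}-\yy^{+}} \ge 0$, which rearranges to $L\|\xx^{+}-\yy^{+}\|_2^2 \le \lin{ L(\xx-\yy) + \nabla f(\yy) - \nabla f(\xx),\ \xx^{+}-\yy^{+}}$. Applying Cauchy--Schwarz to the right-hand side and then~\eqref{eq:Lip} bounds it by $\big(L\|\xx-\yy\|_2 + \|\nabla f(\xx)-\nabla f(\yy)\|_2\big)\|\xx^{+}-\yy^{+}\|_2 \le 2L\|\xx-\yy\|_2\,\|\xx^{+}-\yy^{+}\|_2$. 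Dividing through by $L\|\xx^{+}-\yy^{+}\|_2$ (the case $\xx^{+}=\yy^{+}$ being trivial) yields $\|\xx^{+}-\yy^{+}\|_2 \le 2\|\xx-\yy\|_2$, as claimed. Equivalently, one can observe that $\prox = \mathrm{prox}_{h/L}\circ\big(\mathrm{Id} - \tfrac{1}{L}\nabla f\big)$ (with the indicator of $\C$ folded into $h$), invoke the standard nonexpansiveness of the proximal map of a convex function, and note that $\xx \mapsto \xx - \tfrac{1}{L}\nabla f(\xx)$ is $2$-Lipschitz by~\eqref{eq:Lip} and the triangle inequality; this gives the same factor of $2$.

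The only delicate point is writing the optimality condition correctly when $\C$ is a proper subset of $\reals^d$ and $h$ is possibly nonsmooth: one must retain the normal-cone term $N_{\C}$ and invoke monotonicity of $\partial h + N_{\C}$ rather than of $\partial h$ alone. Everything else is Cauchy--Schwarz and~\eqref{eq:Lip}. I also expect to remark that the constant $2$ (rather than $1$) is genuine: the gradient-descent map $\xx\mapsto \xx - \tfrac1L\nabla f(\xx)$ need only be $2$-Lipschitz under~\eqref{eq:Lip} (it would be nonexpansive if, in addition, $f$ were known to have a lower curvature bound), so no improvement of the constant is available at this level of generality.
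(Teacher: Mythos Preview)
Your argument is correct and follows essentially the same route as the paper: optimality conditions for $\prox(\xx)$ and $\prox(\yy)$, monotonicity of the subdifferential, then Cauchy--Schwarz together with~\eqref{eq:Lip} to obtain $L\|\xx^{+}-\yy^{+}\|_2^2 \le 2L\|\xx-\yy\|_2\,\|\xx^{+}-\yy^{+}\|_2$. The only cosmetic difference is that the paper writes the optimality condition in variational-inequality form (which absorbs the constraint $\C$ implicitly), whereas you carry the normal cone $N_{\C}$ explicitly and invoke monotonicity of $\partial h + N_{\C}$; both are equivalent, and your alternative factorization $\prox = \mathrm{prox}_{h/L}\circ(\mathrm{Id}-\tfrac1L\nabla f)$ is a nice bonus not in the paper.
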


\begin{proof}[Proof of Lemma \ref{lem:proxcont}]
	By Definition~\eqref{eq:prox}, for any $\xx, \yy, \zz, \zz' \in \C$,  $\vv \in \partial h(\prox(\xx))$, and $\ww \in \partial h(\prox(\yy))$, we have
	%\begin{align*}
	%&\lin{\nabla f(\xx) + \vv + L (\prox(\xx) - \xx), \zz - \prox(\xx)} \geq 0, \\
	%&\lin{\nabla f(\yy) + \ww + L (\prox(\yy) - \yy), \zz' - \prox(\yy)} \geq 0, \\
	%\end{align*}
	%which give
	\begin{align*}
	&\lin{\vv, \zz - \prox(\xx)} \\
    &\quad \geq -\lin{\nabla f(\xx) + L (\prox(\xx) - \xx), \zz - \prox(\xx)}, \\
	&\lin{\ww, \zz' - \prox(\yy)} \\
    &\quad \geq -\lin{\nabla f(\yy) + L (\prox(\yy) - \yy), \zz' - \prox(\yy)}.
	\end{align*}
	In particular, for $\zz = \prox(\yy)$ and $\zz' = \prox(\zz)$, we get
	\begin{align*}
	&\lin{\vv, \prox(\yy) - \prox(\xx)} \\
    &\quad \geq -\lin{\nabla f(\xx) + L (\prox(\xx) - \xx), \prox(\yy) - \prox(\xx)}, \\
	&\lin{\ww, \prox(\yy) - \prox(\xx)} \\
    &\quad \leq \lin{\nabla f(\yy) + L (\prox(\yy) - \yy), \prox(\xx) - \prox(\yy)}.
	\end{align*}
	
	By monotonicity of sub-gradient, we get
	%\begin{align*}
	%&h(\prox(\yy)) \geq h(\prox(\xx)) + \lin{\vv, \prox(\yy) - \prox(\xx)},\\
	%&h(\prox(\xx)) \geq h(\prox(\yy)) + \lin{\ww, \prox(\xx) - \prox(\yy)},
	%\end{align*}
	%which upon adding the last two inequalities, we get
	\begin{align*}
	\lin{\vv, \prox(\yy) - \prox(\xx)} \leq \lin{\ww, \prox(\yy) - \prox(\xx)}.
	\end{align*}
	So
	\begin{align*}
	&\lin{\nabla f(\xx) + L (\prox(\xx) - \xx), \prox(\xx) - \prox(\yy)} \\
    &\leq \lin{\nabla f(\yy) + L (\prox(\yy) - \yy), \prox(\xx) - \prox(\yy)},
	\end{align*}
	and as a result
	\begin{align*}
	&\lin{\nabla f(\xx) + L (\prox(\xx) - \xx), \prox(\xx) - \prox(\yy)}\\
	& = \langle \nabla f(\xx) + L \left(\prox(\xx) - \prox(\yy) + \prox(\yy)- \xx\right)\\
    &\quad , \prox(\xx) - \prox(\yy)\rangle\\
	& = L \|\prox(\xx) - \prox(\yy)  \|_{2}^{2} \\
    &\quad + \lin{\nabla f(\xx) + L (\prox(\yy)- \xx), \prox(\xx) - \prox(\yy)} \\
	& \leq \lin{\nabla f(\yy) + L (\prox(\yy) - \yy), \prox(\xx) - \prox(\yy)},
	\end{align*}
	which gives
	\begin{align*}
	& L \|\prox(\xx) - \prox(\yy)  \|_{2}^{2} \\
    &\leq \lin{\nabla f(\yy) - \nabla f(\xx)+ L (\xx - \yy), \prox(\xx) - \prox(\yy)} \\
	&\leq \left(\|\nabla f(\yy) - \nabla f(\xx)\|_{2}  \right. \\
	& \quad\quad \quad \quad\quad \quad \left.+ L \|\xx-\yy\|_{2} \right) \|\prox(\xx) - \prox(\yy)\|_{2} \\
	&\leq 2 L \|\xx-\yy\|_{2} \|\prox(\xx) - \prox(\yy)\|_{2} ,
	\end{align*}
	and the result follows. \qed
\end{proof}

Using $\prox$ operator continuity Lemma \ref{lem:proxcont}, we can
conclude that given any $\yy, \zz \in \C$, if
$\lin{\prox(\yy)-\yy,\yy-\zz}< 0$ and $\lin{\prox(\zz)-\zz,\yy-\zz} >
0$, then there must be a $t^{*} \in (0,1)$ for which $\ww = t^{*} \yy
+ (1-t^{*})\zz$ gives $\lin{\prox(\ww)-\ww,\yy-\zz} = 0$.
Algorithm~\ref{alg:binsearch} finds an approximation to $\ww$ in
$\mathcal{O}(\log L/\epsilon)$ iterations.
\begin{lemma}[Binary Search Lemma]
	\label{lem:binsearch}
	Let $\xx=\text{BinarySearch}(\zz,\yy, \epsilon)$ defined as in Algorithm~\ref{alg:binsearch}. Then one of 3 cases happen:
	\begin{enumerate}[label = (\roman{*})]
		\item \label{lem_item_x_eq_y} $\xx=\yy$ and $\lin{\prox(\xx) - \xx,\xx-\zz}\geq 0$,
		\item \label{lem_item_x_eq_z}  $\xx=\zz$ and $\lin{\prox(\xx) - \xx,\yy-\xx}\leq 0$, or
		%\item $\xx=w$ where $w$ is such that $\lin{\prox(w) - w,\yy-\zz}= 0$. (Note that this is only approximate, but we use equality here to avoid clutter.)
		\item \label{lem_item_x_eq_w} $\xx=t \yy + (1-t) \zz$ for some $t \in (0,1)$ and $| \lin{\prox(\xx) - \xx,\yy-\zz} | \leq 3 \| \yy - \zz \|_{2}^{2} \epsilon$.
	\end{enumerate}
\end{lemma}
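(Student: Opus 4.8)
The plan is to run the same three-way case split that Algorithm~\ref{alg:binsearch} itself performs, using only continuity of the univariate function $r$ and the Lipschitz bound of Lemma~\ref{lem:proxcont}. I would begin by recording the relevant properties of $r$. Writing $\ww_t \defeq t\yy + (1-t)\zz$, convexity of $\C$ gives $\ww_t \in \C$ for all $t \in [0,1]$, so every evaluation $\prox(\ww_t)$ appearing below is legitimate; moreover $r(t) = \lin{\prox(\ww_t) - \ww_t,\, \yy - \zz}$ is continuous on $[0,1]$, since $t \mapsto \ww_t$ is affine, $\prox$ is Lipschitz (hence continuous) by Lemma~\ref{lem:proxcont}, and the inner product against the fixed vector $\yy - \zz$ is continuous. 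This continuity is exactly what makes the call to \textit{Bisection} well posed in the last branch.

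Next I would dispose of the two early-return branches, both immediate. If $r(1) \geq 0$, the algorithm returns $\xx = \yy$, and since $\xx - \zz = \yy - \zz$ we get $\lin{\prox(\xx) - \xx,\, \xx - \zz} = r(1) \geq 0$, which is item~\ref{lem_item_x_eq_y}. If instead $r(1) < 0$ but $r(0) \leq 0$, the algorithm returns $\xx = \zz$, and since $\yy - \xx = \yy - \zz$ we get $\lin{\prox(\xx) - \xx,\, \yy - \xx} = r(0) \leq 0$, which is item~\ref{lem_item_x_eq_z}.

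The only branch with any content is $r(1) < 0 < r(0)$. Here the intermediate value theorem yields a root $t^* \in (0,1)$ with $r(t^*) = 0$, and then \textit{Bisection} returns some $t \in (0,1)$ with $|t - t^*| \leq \epsilon$, so the algorithm outputs $\xx = \ww_t$. To establish item~\ref{lem_item_x_eq_w} I would bound $|r(t)| = |r(t) - r(t^*)|$. Setting $\ww^* \defeq \ww_{t^*}$ we have $\xx - \ww^* = (t - t^*)(\yy - \zz)$, hence $\|\xx - \ww^*\|_2 \leq \epsilon \|\yy - \zz\|_2$, and Lemma~\ref{lem:proxcont} gives $\|\prox(\xx) - \prox(\ww^*)\|_2 \leq 2\|\xx - \ww^*\|_2$. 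Cauchy--Schwarz and the triangle inequality then give
\begin{equation*}
|r(t) - r(t^*)| \leq \left( \|\prox(\xx) - \prox(\ww^*)\|_2 + \|\xx - \ww^*\|_2 \right) \|\yy - \zz\|_2 \leq 3 \|\xx - \ww^*\|_2 \|\yy - \zz\|_2 \leq 3\epsilon \|\yy - \zz\|_2^2 ,
\end{equation*}
which is precisely item~\ref{lem_item_x_eq_w}.

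I do not expect a genuine obstacle. The two places needing a little care are (a) verifying that $r$ is continuous, so that the intermediate value theorem applies and \textit{Bisection} is well defined, and (b) tracking the origin of the constant $3$: it is $2 + 1$, the Lipschitz constant of $\prox$ plus that of the identity map, applied to a displacement $\xx - \ww^*$ of length at most $\epsilon\|\yy - \zz\|_2$. It is also worth stating explicitly at the outset that $\ww_t \in \C$ for $t \in [0,1]$ by convexity of $\C$, so that all the $\prox$ evaluations make sense.
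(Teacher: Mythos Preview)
Your proposal is correct and follows essentially the same argument as the paper: the two early-return branches are immediate, and for the bisection branch you bound $|r(t)-r(t^*)|$ via $\|\xx-\ww^*\|_2\le\epsilon\|\yy-\zz\|_2$, the $2$-Lipschitz bound of Lemma~\ref{lem:proxcont}, and Cauchy--Schwarz to obtain the constant $3=2+1$. If anything, your write-up is slightly more careful than the paper's, since you explicitly verify continuity of $r$ (hence the applicability of the intermediate value theorem and well-posedness of \textit{Bisection}) and note that $\ww_t\in\C$.
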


\begin{proof}[Proof of Lemma \ref{lem:binsearch}]
	Items~\ref{lem_item_x_eq_y} and~\ref{lem_item_x_eq_z}, are simply Steps~\ref{bin_step_y} and~\ref{bin_step_z}, respectively. For item~\ref{lem_item_x_eq_w}, we have 
	\begin{align*}
	&\|\xx - \ww\|_{2} \\
    &= \| t \yy + (1-t)\zz - t^{*} \yy - (1-t^{*})\zz \|_{2} \\
    &= \| (t - t^{*}) \yy - (t - t^{*}) \zz \|_{2} \\
    &\leq \epsilon \| \yy - \zz \|_{2}.
	\end{align*}
	Now it follows that
	\begin{align*}
	&| \lin{\prox(\xx) - \xx,\yy-\zz} | \\
    &= | \lin{\prox(\xx) - \xx,\yy-\zz} - \lin{\prox(\ww) - \ww,\yy-\zz} | \\
	& \leq \| \lin{\prox(\xx) - \prox(\ww) ,\yy-\zz} \|_{2} + | \lin{\xx - \ww ,\yy-\zz} | \\
	& \leq \| \prox(\xx) - \prox(\ww)\|_{2} \| \yy-\zz \|_2 \\
    &\quad + \| \xx - \ww \|_{2}\|\yy-\zz \|_{2} \\
	 & \leq 2\| \xx - \ww\|_{2} \| \yy-\zz \|_{2} \\
     &\quad + \| \xx - \ww \|_{2}\|\yy-\zz \|_{2} \\
	& = 3 \| \xx - \ww\|_{2} \| \yy-\zz \|_{2} \\
	&\leq 3 \epsilon\| \yy - \zz \|_{2}^{2} .
	\end{align*}
    Here, the third inequality follows by Lemma~\ref{lem:proxcont}
	\qed
\end{proof}

Using the above result, we can prove the following:

\begin{corollary}
	\label{cor:proxavg}
	Let $\xx_{k}$, $\yy_k$, $\zz_{k}$ and $\epsilon_{k}$ be defined as in Algorithm \ref{alg:flag} and $\eta_{k} L_{k} \geq 1$. Then for all $k \geq 1$,
	\begin{align*}
	\lin{\pp_k, \xx_{k}-\zz_{k}} \leq (\eta_{k} L_{k} -1) \lin {\pp_k, \yy_{k} - \xx_{k}} + \frac{DL\eta_k L_k}{T^3}.
	\end{align*} 
\end{corollary}

\begin{proof}[Proof of Corollary \ref{cor:proxavg}]
	Note that by Step~\ref{alg_step_grad_map} of Algorithm \ref{alg:flag}), $\pp_k=-L(\prox(\xx_{k}) - \xx_{k})$. For $k = 1$, since $\xx_1 = \yy_1 = \zz_1$, the inequality is trivially true. For $k \geq 2$, we consider the three cases of Lemma \ref{lem:binsearch}: (i) if $\xx_{k}=\yy_{k}$, the right hand side is $1/T \geq 0$ and the left hand side is $\lin{\pp_k, \xx_{k}-\zz_{k}} = \lin{-L(\prox(\xx_{k})-\xx_{k}), \xx_{k}-\zz_{k}} \leq 0$, (ii) if $\xx_{k}=\zz_{k}$, the left hand side is $0$ and $\lin{\pp_k, \yy_{k}-\xx_{k}} = \lin{-L(\prox(\xx_{k})-\xx_{k}), \yy_{k}-\xx_{k}} \geq 0$, so the inequality holds trivially, and (iii) in this last case, for some $t \in (0,1)$, we have 
	\begin{align*}
	&\lin{\pp_k, \xx_{k}-\zz_{k}} \\
    &= \lin{-L(\prox(\xx_{k})-\xx_{k}), t \yy_{k} + (1-t) \zz_{k} -\zz_{k}} \\
	& = -L t \lin{(\prox(\xx_{k})-\xx_{k}), \yy_{k} -\zz_{k}}, 
	\end{align*}
	and
	\begin{align*}
	&\lin{\pp_k, \yy_{k}-\xx_{k}} \\
    &= \lin{-L(\prox(\xx_{k})-\xx_{k}), \yy_{k} - t \yy_{k} - (1-t) \zz_{k}} \\
	&= -L (1-t) \lin{(\prox(\xx_{k})-\xx_{k}), (\yy_{k} -\zz_{k})}. 
	\end{align*}
	Hence
	\begin{align*}
	&\lin{\pp_k, \xx_{k}-\zz_{k}} - (\eta_{k} L_{k} - 1 )\lin {\pp_k, \yy_{k} - \xx_{k}} \\
    &\leq |\lin{\pp_k, \xx_{k}-\zz_{k}} - (\eta_{k} L_{k} - 1 ) \lin {\pp_k, \yy_{k} - \xx_{k}}| \\
	&= |(-L t + (\eta_{k} L_{k} - 1 ) L (1-t)) \\
	& \quad\quad \quad \quad\quad \quad \lin{(\prox(\xx_{k})-\xx_{k}), (\yy_{k} -\zz_{k})}| \\
	&\leq 3 |(-L t + (\eta_{k} L_{k} - 1 ) L (1-t))| \| \yy_{k} - \zz_{k} \|_{2}^{2} \epsilon_{k} \\
	& = 3 |\eta_{k} L_{k} (1 - t) + 1| L \| \yy_{k} - \zz_{k} \|_{2}^{2} \epsilon_{k} \\
	& = 3 (\eta_{k} L_{k}+ 1) L \| \yy_{k} - \zz_{k} \|_{2}^{2} \epsilon_{k} \\
	& = 6 \eta_{k} L_{k} L \| \yy_{k} - \zz_{k} \|_{2}^{2} \epsilon_{k} \\
	& = \frac{6 D\eta_{k} L_{k} L \| \yy_{k} - \zz_{k} \|_{2}^{2}}{D} \frac{1}{6dT^3} \\
	& \leq \frac{DL\eta_k L_k}{T^3},
	\end{align*}
	where in the last line we used the fact that $\|y_k-z_k\|_2^2 \leq Dd$
	\qed
\end{proof}

Similar to \ref{cor:proxavg} for Algorithm \ref{alg:flag}, the following Lemma proves an analogous result for Algorithm \ref{alg:flare}.
\begin{corollary}\label{cor:proxavgflare}
	Let $\xx_{k}$, $\yy_k$, $\zz_{k}$ and $\epsilon_{k}$ be defined as in Algorithm \ref{alg:flare} and $\eta_{k} \tilde{L}_{k} \geq 1$. Then for all $k \geq 1$,
	\begin{align*}
	\lin{\pp_k, \xx_{k}-\zz_{k}} \leq (\eta_{k} \tilde{L}_{k} -1) \lin {\pp_k, \yy_{k} - \xx_{k}} + \frac{DL\eta_k \tilde{L}_k}{T^3}.
	\end{align*} 
\end{corollary}

\begin{proof}[Proof of Corollary \ref{cor:proxavgflare}]
We consider two cases:

1. If $\xx_k$ is generated through Algorithm \ref{alg:flagiteration}, then $\xx_k = \text{BinarySearch}(\yy_k,\zz_k,\epsilon)$ and $\tilde{L}_k = L_k$, so the statement follows from Corollary \ref{cor:proxavg}.

2. If $x_k$ is generated through Algorithm \ref{alg:advanceandverify}, then $ \xx_{k} = \Big(1-\frac{1}{\eta_{k} \tilde{L}_{k}}\Big) \yy_{k} + \frac{1}{\eta_{k} \tilde{L}_{k}} \zz_{k}$, and so satisfies
$$\lin{\pp_k, \xx_{k}-\zz_{k}} = (\eta_{k} \tilde{L}_{k} -1) \lin {\pp_k, \yy_{k} - \xx_{k}}  .$$
\end{proof}

Next, we state a result regarding the mirror descent step. Similar results can be found in most texts on online optimization, e.g. \cite{allen2014linear}.
\begin{lemma}[Mirror Descent Inequality]
	\label{lem:mir}
	Let $\zz_{k+1} = \arg\min_{\zz\in \C} \lin{\eta_k \pp_k, \zz -\zz_{k}} + \hf \|\zz - \zz_{k}\|_{S_{k}}^{2}$ and $D \defeq \sup_{\xx, \yy \in \C} \|\xx - \yy\|_\infty^2$ be the diameter of $\C$ measured by infinity norm. Then for any $\uu \in \C$, we have
	\begin{align*}
	\sum_{k=1}^T \lin{\eta_k \pp_k, \zz_{k} - u} \leq \sum_{k=1}^{T} \frac{\eta_{k}^{2}}{2} \|\pp_k\|_{S_{k}^{*}}^{2} + \frac{D}{2} \|\vs_{T}\|_{1}  .
	%&\leq \sum_{k=1}^T \frac{\eta_k^2}{2} \|\pp_k\|_{\psi_k^*}^2+\sum_{k=1}^T\left[ B_{\psi_k}(u,\zz_{k}) - B_{\psi_k}(u,\zz_{k+1})\right]\\
	%&\leq \sum_{k=1}^T \frac{\eta_k^2}{2} \|\pp_k\|_{\psi_k^*}^2+\sum_{k=1}^{T-1}\left[ B_{\psi_{k+1}}(u,\zz_{k+1}) - B_{\psi_k}(u,\zz_{k+1})\right] + B_{\psi_1}(u,z_1)
	\end{align*}
\end{lemma}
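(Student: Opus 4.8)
The plan is to establish the Mirror Descent Inequality by the standard "potential/telescoping" argument, tracking the Bregman divergences associated with the time-varying regularizers $\psi_k$. First I would fix an arbitrary $\uu \in \C$ and analyze a single step. Because $\zz_{k+1}$ is the minimizer of the strongly convex function $\zz \mapsto \lin{\eta_k \pp_k, \zz - \zz_k} + \hf\|\zz-\zz_k\|_{S_k}^2$ over $\C$, the first-order optimality condition gives $\lin{\eta_k \pp_k + S_k(\zz_{k+1}-\zz_k), \zz - \zz_{k+1}} \geq 0$ for all $\zz \in \C$, in particular for $\zz = \uu$. Rearranging, $\lin{\eta_k \pp_k, \zz_{k+1} - \uu} \leq \lin{S_k(\zz_{k+1}-\zz_k), \uu - \zz_{k+1}}$, and then applying the three-point identity \eqref{eq:breg_eq} (with $\xx = \zz_{k+1}$, $\yy = \zz_k$, $\zz = \uu$) converts the right side into $\hf\|\zz_k - \uu\|_{S_k}^2 - \hf\|\zz_{k+1}-\uu\|_{S_k}^2 - \hf\|\zz_{k+1}-\zz_k\|_{S_k}^2$.

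Next I would split $\lin{\eta_k \pp_k, \zz_k - \uu} = \lin{\eta_k \pp_k, \zz_{k+1}-\uu} + \lin{\eta_k\pp_k, \zz_k - \zz_{k+1}}$ and bound the second term: $\lin{\eta_k \pp_k, \zz_k - \zz_{k+1}} \leq \frac{\eta_k^2}{2}\|\pp_k\|_{S_k^*}^2 + \hf\|\zz_k - \zz_{k+1}\|_{S_k}^2$ by the Fenchel–Young / Cauchy–Schwarz inequality in the $S_k$-norm (using the dual norm $\|\cdot\|_{S_k^*}$ from \eqref{eq:dual_norm}). Combining, the two $\hf\|\zz_{k+1}-\zz_k\|_{S_k}^2$ terms cancel, leaving the per-step bound
\begin{equation*}
\lin{\eta_k \pp_k, \zz_k - \uu} \leq \frac{\eta_k^2}{2}\|\pp_k\|_{S_k^*}^2 + \hf\|\zz_k - \uu\|_{S_k}^2 - \hf\|\zz_{k+1}-\uu\|_{S_k}^2.
\end{equation*}

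Then I would sum over $k = 1$ to $T$. The first term on the right sums directly to $\sum_{k=1}^T \frac{\eta_k^2}{2}\|\pp_k\|_{S_k^*}^2$, matching the first term in the claim. The remaining terms, $\sum_{k=1}^T \left(\hf\|\zz_k - \uu\|_{S_k}^2 - \hf\|\zz_{k+1}-\uu\|_{S_k}^2\right)$, do not telescope cleanly because $S_k$ changes with $k$; I would regroup as $\hf\|\zz_1-\uu\|_{S_1}^2 + \hf\sum_{k=2}^T \left(\|\zz_k-\uu\|_{S_k}^2 - \|\zz_k - \uu\|_{S_{k-1}}^2\right) - \hf\|\zz_{T+1}-\uu\|_{S_T}^2$. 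Dropping the last (nonpositive) term, each difference in the sum equals $\hf (\zz_k - \uu)^T (S_k - S_{k-1})(\zz_k-\uu)$; since $S_k - S_{k-1} = \text{diag}(\vs_k - \vs_{k-1})$ is diagonal with nonnegative entries (as $\vs_k(i) = \|G_k(i,:)\|_2$ is nondecreasing in $k$), this is at most $\|\zz_k - \uu\|_\infty^2 \sum_i (\vs_k(i) - \vs_{k-1}(i)) = \|\zz_k-\uu\|_\infty^2 (\|\vs_k\|_1 - \|\vs_{k-1}\|_1)$, and similarly $\hf\|\zz_1-\uu\|_{S_1}^2 \leq \hf\|\zz_1-\uu\|_\infty^2\|\vs_1\|_1$ (the $\delta\mathbb{I}$ contributions must be tracked too, but they are lower-order; I would absorb or carry the $\delta$ constant explicitly). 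Bounding every $\|\zz_k-\uu\|_\infty^2$ by $D$, the whole sum telescopes to at most $\frac{D}{2}\|\vs_T\|_1$, which is the second term in the claim. The main obstacle is the non-telescoping regularizer sum: the key facts needed are the monotonicity $\vs_{k-1}(i) \leq \vs_k(i)$ coordinatewise (so that $S_k - S_{k-1} \succeq 0$ and the diagonal structure lets us pull out $\|\cdot\|_\infty^2$) and the diameter bound $D$; getting the constant on $\|\vs_T\|_1$ exactly $1/2$ (rather than larger) requires being careful that we drop the final negative term and handle the $\delta$ term without inflating the leading constant.
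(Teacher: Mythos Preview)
Your proposal is correct and follows essentially the same route as the paper's proof: the first-order optimality condition for $\zz_{k+1}$, the three-point identity \eqref{eq:breg_eq}, the Fenchel--Young bound yielding $\frac{\eta_k^2}{2}\|\pp_k\|_{S_k^*}^2$, and then the regroup-and-telescope argument using the coordinatewise monotonicity of $\vs_k$ together with the $\|\cdot\|_\infty$ diameter bound. Your remark about tracking the $\delta\mathbb{I}$ contribution is well taken --- the paper simply drops it (since $S_k - S_{k-1}$ is exactly $\mathrm{diag}(\vs_k - \vs_{k-1})$, the $\delta$ only appears in the $k=1$ term and can be absorbed or treated as lower order).
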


\begin{proof}[Proof of Lemma \ref{lem:mir}]
	For any $\uu \in \C$ and by optimality of $\zz_{k+1}$, we have $\lin{\eta_{k} \pp_{k},\zz_{k+1} - \uu} \leq \lin{S_{k}(\zz_{k+1} - \zz_{k}),\uu - \zz_{k+1}}$ .
	Hence, using~\eqref{eq:breg_eq} and ~\eqref{eq:dual_norm}, it follows that
	\begin{align*}
	&\lin{\eta_k \pp_k, \zz_{k}-\uu}\\
	&= \lin{\eta_k \pp_k, \zz_{k}-\zz_{k+1}} + \lin{\eta_k \pp_k, \zz_{k+1}-\uu}\\
	&\leq \lin{\eta_k \pp_k, \zz_{k}-\zz_{k+1}} - \lin{S_{k}(\zz_{k+1} - \zz_{k}),\zz_{k+1} - \uu}\\
	&= \lin{\eta_k \pp_k, \zz_{k}-\zz_{k+1}} - \hf \| \zz_{k+1} - \zz_{k} \|_{S_{k}}^{2} \\
    &\quad - \hf \| \zz_{k+1} - \uu \|_{S_{k}}^{2} + \hf \| \uu - \zz_{k} \|_{S_{k}}^{2}\\
	&\leq \sup_{\zz \in \reals^d} \left\{\lin{\eta_k \pp_k, \zz} - \hf \| \zz \|_{S_{k}}^{2}\right\} \\
    &\quad - \hf \| \zz_{k+1} - \uu \|_{S_{k}}^{2} + \hf \| \uu - \zz_{k} \|_{S_{k}}^{2}\\
	&= \frac{\eta_{k}^{2}}{2} \|\pp_k\|_{S_{k}^{*}}^{2} - \hf \| \uu - \zz_{k+1} \|_{S_{k}}^{2} + \hf \| \uu - \zz_{k} \|_{S_{k}}^{2}.
	%&=\lin{\eta_k \pp_k, \zz_{k}-\zz_{k+1}} + B_{\psi_k}(u,\zz_{k}) - B_{\psi_k}(u,\zz_{k+1})-B_{\psi_k}(\zz_{k+1},\zz_{k})\\
	%&\leq \lin{\eta_k \pp_k, \zz_{k}-\zz_{k+1}} - \frac{1}{2}\|\zz_{k}-\zz_{k+1}\|_{\psi_k}^2 + B_{\psi_k}(u,\zz_{k}) - B_{\psi_k}(u,\zz_{k+1})\\
	%&\leq \frac{\eta_k^2}{2}\|\pp_k\|_{\psi_k^*}^2 + B_{\psi_k}(u,\zz_{k}) - B_{\psi_k}(u,\zz_{k+1})
	\end{align*}
	Now recalling from Steps~\ref{algstep:g_ik}-~\ref{algstep:S_k} of Algorithm~\ref{alg:flag} that $S_{k} = \text{diag}(\vs_{k}) + \delta \mathbb{I}$ and $\vs_{k} \geq \vs_{k-1}$, we sum over $k$ to get 
	\begin{align*}
	&\sum_{k=1}^{T} \lin{\eta_k \pp_k, \zz_{k}-u} \\
    &\leq \sum_{k=1}^{T} \frac{\eta_{k}^{2}}{2} \|\pp_k\|_{S_{k}^{*}}^{2} + \hf \| \uu - \zz_{1} \|_{S_{1}}^{2} \\
    &\quad + \sum_{k=2}^{T} \hf \| \uu - \zz_{k} \|_{S_{k}}^{2} - \hf \| \uu - \zz_{k} \|_{S_{k-1}}^{2} \\
	& = \sum_{k=1}^{T} \frac{\eta_{k}^{2}}{2} \|\pp_k\|_{S_{k}^{*}}^{2} + \hf \| \uu - \zz_{1} \|_{S_{1}}^{2} \\
    &\quad + \hf \sum_{k=2}^{T}  \lin{(S_{k} - S_{k-1}) (\uu - \zz_{k}), \uu - \zz_{k}} \\
	& \leq \sum_{k=1}^{T} \frac{\eta_{k}^{2}}{2} \|\pp_k\|_{S_{k}^{*}}^{2} + \hf \| \uu - \zz_{1}\|_{\infty}^{2} \lin{\vs_{1},\vec{1}} \\
    &\quad + \hf \sum_{k=2}^{T}  \| \uu - \zz_{k}\|_{\infty}^{2} \lin{\vs_{k} - \vs_{k-1},\vec{1}} \\
	& \leq \sum_{k=1}^{T} \frac{\eta_{k}^{2}}{2} \|\pp_k\|_{S_{k}^{*}}^{2} + \frac{D}{2} \lin{s_{1},\vec{1}} + \frac{D}{2} \sum_{k=2}^{T}   \lin{\vs_{k} - \vs_{k-1},\vec{1}} \\
	& = \sum_{k=1}^{T} \frac{\eta_{k}^{2}}{2} \|\pp_k\|_{S_{k}^{*}}^{2} + \frac{D}{2} \|\vs_{T}\|_{1}  .
	\end{align*}
	\qed
\end{proof}

Finally, we state a similar result to that of~\cite{duchi2011adaptive} that captures the benefits of using $S_k$ in FLAG. 
\begin{lemma} [AdaGrad Inequalities]
	\label{lem:adareg}
	Define $q_{_T} \defeq \sum_{i=1}^d \|G_{T}(i,:)\|_2$, where $G_{k}$ is as in Step~\ref{algstep:g_ik} of Algorithm~\ref{alg:flag}. We have
	\begin{enumerate}[label = (\roman{*})]
		\item \label{lem_item_q_low} $\sum_{k=1}^T \vg_{k}^{T} S_{k}^{-1} \vg_{k} \leq 2q_{_T},$
		\item \label{lem_item_q}
		$q_{_T}^{2}=\min_{S \in \mathcal{S}}\sum_{k=1}^T \vg_{k} ^T S^{-1} \vg_{k}$, 
		where $\mathcal{S} \defeq \{S \in \reals^{d \times d} \; | \; S \text{ is diagonal}, \; S_{ii} > 0, \; \text{trace}(S) \leq 1 \}$, and
		%\item $q=\min_{\|s\|_1\leq q}\sum_{k=1}^T \vg_{k} ^T S^{-1} \vg_{k}$
		\item \label{lem_item_bnd}$\sqrt{T}\leq q_{_T}\leq\sqrt{dT}$.
		%\item $q = \sqrt{d} \sqrt{\min_{\|s\|_1\leq d}\sum_{k=1}^T \vg_{k} ^T S^{-1} \vg_{k}}$
	\end{enumerate}
\end{lemma}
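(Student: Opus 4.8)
The plan is to use that $S_k=\text{diag}(\vs_k)+\delta\mathbb{I}$ is diagonal, so each quantity in the statement decouples across the $d$ coordinates and reduces to an elementary inequality about the nonnegative vector $\vs_T$ and the matrix $G_T=[\vg_1,\dots,\vg_T]$, whose columns are unit vectors. The one fact I reuse throughout is that $\vs_k(i)^2=\sum_{j=1}^k \vg_j(i)^2$, so in particular $\vs_k(i)=0$ forces $\vg_j(i)=0$ for every $j\le k$; this is what lets me deal with coordinates along which the iterates never move.

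For part~\ref{lem_item_q_low}, since $\delta\ge 0$ we have $\vg_k^T S_k^{-1}\vg_k=\sum_{i=1}^d \vg_k(i)^2/(\vs_k(i)+\delta)\le \sum_{i:\,\vs_k(i)>0}\vg_k(i)^2/\vs_k(i)$, the coordinates with $\vs_k(i)=0$ dropping out because there $\vg_k(i)=0$. Fixing a coordinate $i$ and writing $b_j:=\vg_j(i)^2$, $a_k:=\sum_{j\le k}b_j=\vs_k(i)^2$, I would invoke the standard telescoping bound $\sum_{k=1}^T b_k/\sqrt{a_k}\le 2\sqrt{a_T}$, which follows by induction from $\sqrt{a_k}-\sqrt{a_{k-1}}=b_k/(\sqrt{a_k}+\sqrt{a_{k-1}})\ge b_k/(2\sqrt{a_k})$ (with the convention $0/0=0$). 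Summing the resulting $\sum_{k=1}^T \vg_k(i)^2/\vs_k(i)\le 2\vs_T(i)$ over $i$ gives $\sum_{k=1}^T \vg_k^T S_k^{-1}\vg_k\le 2\sum_{i=1}^d \vs_T(i)=2q_{_T}$.

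For part~\ref{lem_item_bnd}, the unit-norm property of the $\vg_k$ gives $\sum_{i=1}^d \vs_T(i)^2=\sum_{k=1}^T\sum_{i=1}^d\vg_k(i)^2=\sum_{k=1}^T\|\vg_k\|_2^2=T$; since $q_{_T}=\sum_{i=1}^d\vs_T(i)$ is the $\ell_1$ norm of the $d$-vector $(\vs_T(1),\dots,\vs_T(d))$, whose $\ell_2$ norm is $\sqrt T$, the inequalities $\|\cdot\|_2\le\|\cdot\|_1\le\sqrt d\,\|\cdot\|_2$ immediately yield $\sqrt T\le q_{_T}\le\sqrt{dT}$. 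For part~\ref{lem_item_q}, writing $S=\text{diag}(\vs)\in\mathcal{S}$ gives $\sum_{k=1}^T\vg_k^T S^{-1}\vg_k=\sum_{i=1}^d \vs_T(i)^2/\vs(i)$, and Cauchy--Schwarz together with $\text{trace}(S)=\sum_i\vs(i)\le 1$ shows
\[
q_{_T}^2=\Big(\sum_{i=1}^d \frac{\vs_T(i)}{\sqrt{\vs(i)}}\sqrt{\vs(i)}\Big)^2\le\Big(\sum_{i=1}^d\frac{\vs_T(i)^2}{\vs(i)}\Big)\Big(\sum_{i=1}^d\vs(i)\Big)\le\sum_{i=1}^d\frac{\vs_T(i)^2}{\vs(i)} .
\]
Thus the objective is $\ge q_{_T}^2$ everywhere on $\mathcal{S}$, and the choice $\vs(i)=\vs_T(i)/q_{_T}$ (on the coordinates with $\vs_T(i)>0$, the others being irrelevant) makes $\sum_i\vs(i)=1$ and attains the value $q_{_T}^2$, so the minimum is exactly $q_{_T}^2$.

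I expect no serious obstacle; the only point needing a little care is the degenerate-coordinate bookkeeping shared by parts~\ref{lem_item_q_low} and~\ref{lem_item_q} --- coordinates $i$ with $\vs_T(i)=0$, where one cannot literally divide by $\vs_k(i)$ and where the ideal weight $\vs(i)=0$ lies outside $\mathcal{S}$ --- which in both places is dispatched by the remark that such a coordinate has $\vg_j(i)=0$ for all $j$ and hence contributes $0$ to every sum in sight, so it may simply be omitted.
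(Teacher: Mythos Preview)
Your argument is correct. Part~\ref{lem_item_q_low} is essentially the paper's proof: both decouple across coordinates and apply the telescoping inequality $\sum_{k} b_k/\sqrt{a_k}\le 2\sqrt{a_T}$; the paper cites this as Lemma~4 of \cite{duchi2011adaptive} while you prove it inline, but the content is the same.

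Parts~\ref{lem_item_q} and~\ref{lem_item_bnd}, however, take a genuinely different and more elementary route than the paper. For~\ref{lem_item_q}, the paper sets up the Lagrangian of the constrained problem, invokes strong duality and complementary slackness to identify the optimizer, and then evaluates the optimum; you instead get the lower bound in one line from Cauchy--Schwarz and exhibit the minimizer directly. For~\ref{lem_item_bnd}, the paper argues the upper bound by writing down a second Lagrangian (for maximizing $q_{_T}$ subject to $\|\vg_k\|_2=1$) and solving the KKT system; your observation that $\sum_i \vs_T(i)^2=\sum_k\|\vg_k\|_2^2=T$, so that $q_{_T}=\|\vs_T\|_1$ with $\|\vs_T\|_2=\sqrt{T}$, reduces both bounds to the standard $\ell_1$--$\ell_2$ norm comparison on $\reals^d$. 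Your approach is shorter and avoids any optimization machinery; the paper's approach has the mild advantage of making the structure of the optimizers explicit, but that structure is not used elsewhere. The degenerate-coordinate issue you flag (when some $\vs_T(i)=0$ the ``$\min$'' in~\ref{lem_item_q} is really an infimum) is handled no more carefully in the paper than in your sketch, so this is not a gap relative to the source.
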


\begin{proof}[Proof of Lemma \ref{lem:adareg}]
	To prove part~\ref{lem_item_q_low}, we use the following inequality introduced in the proof of  Lemma 4 in \cite{duchi2011adaptive}: for any arbitrary real-valued sequence of $\{a_i\}_{i=1}^{T}$ and its vector representation as $a_{1:T} = [a_1, a_2,\ldots, a_T]$, we have
	$$\sum_{k=1}^T \frac{a_k^2}{\|a_{1:k}\|_2}\leq 2\|a_{1:T}\|_{2}.$$
	So it follows that
	\begin{align*}
    &\sum_{k=1}^T \vg_{k}^{T} S_{k}^{-1} \vg_{k} \\
    &= \sum_{k=1}^T \sum_{i=1}^{d} \frac{\vg_{k}^{2}(i)}{\vs_{k}^{2}(i)} \\
    &= \sum_{i=1}^{d} \sum_{k=1}^T \frac{\vg_{k}^{2}(i)}{\vs_{k}(i)} \\
    &= \sum_{i=1}^{d} \sum_{k=1}^T \frac{\vg_{k}^{2}(i)}{\|G_{k}(i,:)\|_2 } \\
    &\leq 2q_{_T},
    \end{align*}
	where the last equality follows from the definition of $\vs_k$ in Step~\ref{algstep:s_ki} of Algorithm~\ref{alg:flag}.
	
	For the rest of the proof, one can easily see that
	\begin{align*}
	\sum_{k=1}^T \vg_{k} ^T S^{-1} \vg_{k} = \sum_{k=1}^T \sum_{i=1}^{d} \frac{\vg_{k}^{2}(i)}{\vs(i)} = \sum_{i=1}^{d} \frac{a(i)}{\vs(i)},
	\end{align*}
	where $a(i) \defeq \sum_{k=1}^T  \vg_{k}^{2}(i)$ and $\vs = \text{diag}(S)$. Now the Lagrangian for $\lambda \geq 0$ and $\bm{\nu} \geq \vec{0}$, can be written as
	\begin{align*}
	\mathcal{L}(\vs,\lambda, \bm{\nu}) &= \sum_{i=1}^{d} \frac{a(i)}{\vs(i)} + \lambda \left(\sum_{i=1}^{d} \vs(i) - 1\right) + \lin{\bm{\nu}, \vs}.
	\end{align*}
	Since the strong duality holds, for any primal-dual optimal solutions, $S^{*}, \lambda^{*}$ and $\bm{\nu}^{*}$, it follows from complementary slackness that $\bm \nu^{*} = \vec{0}$ (since $\vs^{*} > \vec{0}$). Now requiring that ${\partial \mathcal{L}(\vs^{*},\lambda^{*}, \bm{\nu}^{*})}/{\partial \vs(i)}  = 0$ gives $\lambda^{*} \vs^{*}(i) = \sqrt{a_{i}} > 0$, which since $\vs^{*}(i) > 0$, implies that $\lambda^{*} > 0$. As a result, by using complementary slackness again, we must have $\sum_{i=1}^{d} \vs^{*}(i) = 1$. Now simple algebraic calculations gives $\vs^{*}(i) = \sqrt{a_{i}}/(\sum_{i=1}^{d} \sqrt{a_{i}})$ and part~\ref{lem_item_q} follows.
	
	For part~\ref{lem_item_bnd}, recall that $\|\vg_{k}\|_2=1$. Now, since $\lambda_{\min} (S^{01}) \geq 1$, one has $1 \leq \vg_{k} ^T S^{-1} \vg_{k}$, and so $q_{_T} \geq 1$. One the other hand, consider the optimization problem
	\begin{align*}
	\begin{aligned}
	\max \; & \sum_{i=1}^d \|G_{T}(i,:)\|_2 = \sum_{i=1}^{d} \sqrt{\sum_{k=1}^{T} \vg_{i}^{2}(k)} \\
	\text{s.t.} &  \; \|\vg_{k}\|^{2}_2 = 1, \; k =1,2,\ldots,T.
	\end{aligned}
	\end{align*}
	The Lagrangian can be written as
	\begin{align*}
	\mathcal{L}(\{\vg_{k}\}_{k=1}^{T},\{\lambda\}_{k=1}^{T}) = &\sum_{i=1}^d \sqrt{\sum_{k=1}^{T} \vg_{i}^{2}(k)} \\
	& + \sum_{k=1}^{T} \lambda_k \left(1 - \sum_{i=1}^{d} \vg^{2}_{i}(k)\right).
	\end{align*}
	By KKT necessary condition, we require that $\partial \mathcal{L}(\{\vg_{k}\}_{k=1}^{T},\{\lambda\}_{k=1}^{T}) / \partial \vg_{i}(k) = 0$, which implies that
	$\lambda_{k} = {1}/({2 \sqrt{\sum_{k=1}^{T} \vg_{i}^{2}(k)} }), \quad i = 1,2,\ldots,d$. Hence, $T = \sum_{i=1}^{d} \sum_{k=1}^{T} \vg_{i}^{2}(k)  = {d}/({4\lambda_{k}^{2}})$,
	and so $2 \lambda_{k} = \sqrt{{d}/{T}}$, which gives $q_{_T} \leq \sqrt{dT}$. \qed
\end{proof}

We can now prove the central theorems of which is used to obtain FLAG's main result.
\begin{theorem}
	\label{thm:mastertheorem}
	Let $D \defeq \sup_{\xx, \yy \in \C} \|\xx - \yy\|_\infty^2$. For any $\uu \in \C$, after $T$ iterations of Algorithm \ref{alg:flag}, we get
	%\begin{align*}
	%\sum_{k=1}^T \eta_k (F(\yy_{k+1}) - F(\uu))   \leq 1 + \frac{D}{2}\|\vs_T\|_1 + \sum_{k=1}^T\eta_k( \eta_k L_k -1 )\left( F(\yy_{k}) - F(\yy_{k+1}) \right).
	%\end{align*}
	\begin{align*}
	&\sum_{k=1}^T \Big\{\left(\eta^{2}_{k-1} L_{k-1} - \eta_{k}^{2} L_{k} + \eta_{k} \right) F(\yy_{k}) - \eta_{k} F(\uu)  \Big\} \\
    &\quad + \eta_{T}^{2} L_{T} F(\yy_{T+1}) \\
	&\leq \sum_{k=1}^T \frac{DL \eta_k^2 L_k}{T^3} + \frac{D}{2}\|\vs_T\|_1.
	\end{align*}
	%$$\sum_{k=1}^T \eta_k (F(\yy_{k}) - F(u)) \leq \sum_{k=1}^T (\eta_k^2 L_k - \eta_{k+1}^2 L_{k+1}) F(\yy_{k}) + 2D\|s_T\|_1$$
\end{theorem}

\begin{proof}[Proof of Theorem \ref{thm:mastertheorem}]
	Noting that $\pp_{k} = -L (\yy_{k+1} - \xx_{k})$ is the gradient mapping of $F$ on $\C$, it follows that
	\begin{align*}
	&\sum_{k=1}^T \eta_k (F(\yy_{k+1}) - F(\uu))  \\
    & = \sum_{k=1}^T \eta_k (F(\prox(\xx_{k})) - F(\uu))\\
	& \leq \sum_{k=1}^T \eta_k \lin{\pp_{k}, \xx_{k}-\uu} - \frac{\eta_{k}}{2 L} \| \pp_{k} \|_{2}^{2} \\  
	&= \sum_{k=1}^T \eta_k\lin{\pp_k, (\zz_{k} - \uu)} + \sum_{k=1}^T\eta_k\lin{\pp_k, \xx_{k} - \zz_{k}} - \frac{\eta_{k}}{2 L} \| \pp_{k} \|_{2}^{2} \\
	%\label{line:122}
	&\leq \sum_{k=1}^{T} \frac{\eta_{k}^{2}}{2} \|\pp_k\|_{S_{k}^{-1}}^{2} + \frac{D}{2} \|\vs_{T}\|_{1}  + \sum_{k=1}^T\eta_k\lin{\pp_k, \xx_{k} - \zz_{k}} - \frac{\eta_{k}}{2 L} \| \pp_{k} \|_{2}^{2} \\
	&= \sum_{k=1}^{T} \frac{\eta_{k}(\eta_{k} L_{k}-1)}{2L}\|\pp_k\|_{2}^{2} + \frac{D}{2} \|\vs_{T}\|_{1}  + \sum_{k=1}^T\eta_k\lin{\pp_k, \xx_{k} - \zz_{k}} \\
	&\leq \sum_{k=1}^{T} \frac{\eta_{k}(\eta_{k} L_{k}-1)}{2L}\|\pp_k\|_{2}^{2} + \frac{D}{2} \|\vs_{T}\|_{1} \\
	& \quad \quad \quad+ \sum_{k=1}^T \left(\eta_k ( \eta_k L_k -1 ) \lin{\pp_k, \yy_{k} - \xx_{k}} + \frac{DL\eta_k^2 L_k}{T^3} \right) \\
	&\leq \sum_{k=1}^T \frac{DL\eta_k^2 L_k}{T^3} + \frac{D}{2}\|\vs_T\|_1 \\
	&\quad + \sum_{k=1}^T\eta_{k}(\eta_{k} L_{k}-1) \left( F(\yy_{k}) - F(\yy_{k+1}) \right). \quad\text{(Lemma~\ref{lemma:grad_map}) }
	%\label{l:212}
	\end{align*}
Here, the first inequality is by Lemma~\ref{lemma:grad_map}, the second inequality is by Lemma \ref{lem:mir}, the third equality is by Step~\ref{algstep:L_k} of Algorithm \ref{alg:flag}, and the second last inequality is by Corollary~\ref{cor:proxavg}.
	Now we have
	\begin{align*}
	& \sum_{k=1}^T \eta_k (F(\yy_{k+1}) - F(\uu))  - \eta_{k}(\eta_{k} L_{k}-1) \left( F(\yy_{k}) - F(\yy_{k+1}) \right) \\
	& =\sum_{k=1}^T \eta_k  F(\yy_{k+1}) - \eta_k  F(\uu)  - \eta_{k}(\eta_{k} L_{k}-1) F(\yy_{k}) \\
    & \quad + \eta_{k}(\eta_{k} L_{k}-1) F(\yy_{k+1}) \\
	& = \sum_{k=1}^T \eta_{k}^{2} L_{k} F(\yy_{k+1}) - \eta_k  F(\uu)  - \eta_{k}(\eta_{k} L_{k}-1) F(\yy_{k}) \\
	&= \eta_{T}^{2} L_{T} F(\yy_{T+1}) \\
    &\quad + \sum_{k=1}^T \eta_{k-1}^{2} L_{k-1} F(\yy_{k}) - \eta_k  F(\uu)  - \eta_{k}(\eta_{k} L_{k}-1) F(\yy_{k}) \\
	& = \eta_{T}^{2} L_{T} F(\yy_{T+1}) \\
    &\quad + \sum_{k=1}^T \left( \eta_{k-1}^{2} L_{k-1} - \eta_{k}^{2} L_{k} + \eta_{k} \right) F(\yy_{k}) - \eta_k  F(\uu),
	\end{align*}
	and the result follows. \qed
\end{proof}

Once again, we present the analog of Theorem \ref{thm:mastertheorem} for Algorithm \ref{alg:flare}.
\begin{theorem} 
	\label{thm:mastertheoremflare}
	Let $D \defeq \sup_{\xx, \yy \in \C} \|\xx - \yy\|_\infty^2$. For any $\uu \in \C$, after $T$ iterations of Algorithm \ref{alg:flag}, we get
	\begin{align*}
	&\sum_{k=1}^T \Big\{\left(\eta^{2}_{k-1} \tilde{L}_{k-1} - \eta_{k}^{2} \tilde{L}_{k} + \eta_{k} \right) F(\yy_{k}) - \eta_{k} F(\uu)  \Big\} \\
    &\quad + \eta_{T}^{2} \tilde{L}_{T} F(\yy_{T+1}) \\
	&\leq \sum_{k=1}^T \frac{D\tilde{L} \eta_k^2 \tilde{L}_k}{T^3} + \frac{D}{2}\|\vs_T\|_1.
	\end{align*}
	%$$\sum_{k=1}^T \eta_k (F(\yy_{k}) - F(u)) \leq \sum_{k=1}^T (\eta_k^2 L_k - \eta_{k+1}^2 L_{k+1}) F(\yy_{k}) + 2D\|s_T\|_1$$
\end{theorem}

\begin{proof}[Proof of Theorem \ref{thm:mastertheoremflare}] Parts of this proof which differ from the proof of Theorem \ref{thm:mastertheorem} are bolded.
	Noting that $\pp_{k} = -L (\yy_{k+1} - \xx_{k})$ is the gradient mapping of $F$ on $\C$, it follows that
	\begin{align*}
	&\sum_{k=1}^T \eta_k (F(\yy_{k+1}) - F(\uu))  \\
    &= \sum_{k=1}^T \eta_k (F(\prox(\xx_{k})) - F(\uu))\\
	&\leq \sum_{k=1}^T \eta_k \lin{\pp_{k}, \xx_{k}-\uu} - \frac{\eta_{k}}{2 L} \| \pp_{k} \|_{2}^{2} \quad \\  
	&= \sum_{k=1}^T \eta_k\lin{\pp_k, (\zz_{k} - \uu)} + \sum_{k=1}^T\eta_k\lin{\pp_k, \xx_{k} - \zz_{k}} \\
    &\quad - \frac{\eta_{k}}{2 L} \| \pp_{k} \|_{2}^{2} \\
	&\leq \sum_{k=1}^{T} \frac{\eta_{k}^{2}}{2} \|\pp_k\|_{S_{k}^{-1}}^{2} + \frac{D}{2} \|\vs_{T}\|_{1}  + \sum_{k=1}^T\eta_k\lin{\pp_k, \xx_{k} - \zz_{k}} \\
    &\quad - \frac{\eta_{k}}{2 L} \| \pp_{k} \|_{2}^{2}\\
	&= \sum_{k=1}^{T} \frac{\eta_{k}(\eta_{k} \tilde{L}_{k}-1)}{2L}\|\pp_k\|_{2}^{2} + \frac{D}{2} \|\vs_{T}\|_{1}  \\
	& \quad \quad \quad + \sum_{k=1}^T\eta_k\lin{\pp_k, \xx_{k} - \zz_{k}} \\
	&\leq \sum_{k=1}^{T} \frac{\eta_{k}(\eta_{k} \tilde{L}_{k}-1)}{2L}\|\pp_k\|_{2}^{2} + \frac{D}{2} \|\vs_{T}\|_{1}   \\
	& \quad + \sum_{k=1}^T \left(\eta_k ( \eta_k \tilde{L}_k -1 ) \lin{\pp_k, \yy_{k} - \xx_{k}} + \frac{DL\eta_k^2 \tilde{L}_k}{T^3} \right)\\
	&\leq \sum_{k=1}^T \frac{DL\eta_k^2 \tilde{L}_k}{T^3} + \frac{D}{2}\|\vs_T\|_1 \\
    &\quad + \sum_{k=1}^T\eta_{k}(\eta_{k} \tilde{L}_{k}-1) \left( F(\yy_{k}) - F(\yy_{k+1}) \right). 
	%\label{l:212}
	\end{align*}
    Here, the first inequality follows from Lemma~\ref{lemma:grad_map}, the second inequality follows from Lemma \ref{lem:mir}, the last equality follows from Steps~\ref{algstep:L_k_flare_1} and \ref{algstep:L_k_accept_condition} of Algorithm~\ref{alg:advanceandverify}, Steps \ref{algstep:L_k_flare_2} and \ref{algstep:L_k_flare_3} of Algorithm~\ref{alg:flagiteration}, and the second last inequality follows from Corollary~\ref{cor:proxavgflare}, and the last equality follows from Lemma~\ref{lemma:grad_map}.
	
	Now we have
	\begin{align*}
	& \sum_{k=1}^T \eta_k (F(\yy_{k+1}) - F(\uu))  \\
	& \quad - \eta_{k}(\eta_{k} \tilde{L}_{k}-1) \left( F(\yy_{k}) - F(\yy_{k+1}) \right) \\
	&= \sum_{k=1}^T \eta_k  F(\yy_{k+1}) - \eta_k  F(\uu)  - \eta_{k}(\eta_{k} \tilde{L}_{k}-1) F(\yy_{k}) \\
    &\quad + \eta_{k}(\eta_{k} \tilde{L}_{k}-1) F(\yy_{k+1})\\
	& =\sum_{k=1}^T \eta_{k}^{2} L_{k} F(\yy_{k+1}) - \eta_k  F(\uu)  - \eta_{k}(\eta_{k} \tilde{L}_{k}-1) F(\yy_{k}) \\
	& =\eta_{T}^{2} \tilde{L}_{T} F(\yy_{T+1})\\
    & \quad + \sum_{k=1}^T \eta_{k-1}^{2} \tilde{L}_{k-1} F(\yy_{k}) - \eta_k  F(\uu)  \\
    & \quad - \eta_{k}(\eta_{k} \tilde{L}_{k}-1) F(\yy_{k}) \\
	& = \eta_{T}^{2} \tilde{L}_{T} F(\yy_{T+1}) \\
    &\quad + \sum_{k=1}^T \left( \eta_{k-1}^{2} \tilde{L}_{k-1} - \eta_{k}^{2} \tilde{L}_{k} + \eta_{k} \right) F(\yy_{k}) - \eta_k  F(\uu),
	\end{align*}
	and the result follows. \qed
\end{proof}

We now set out to put the final piece of the proof in place: choosing the stepsize $\eta_k$ for the mirror descent step.

\begin{lemma}
	\label{lem:eta_k}
	For the choice of $\eta_k$ in Algorithm~\ref{alg:flag} and $k \geq 1$, we have
	\begin{enumerate}[label = (\roman*)]
		\item \label{eta_k_rec} $\eta_k^2 L_{k} = \sum_{i=1}^k \eta_i$,
		\item $\eta^{2}_{k-1} L_{k-1} - \eta_{k}^{2} L_{k} + \eta_{k} = 0$, and
		\item $\eta_kL_k \geq 1$.
	\end{enumerate}
\end{lemma}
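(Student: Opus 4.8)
The plan is to extract the three parts in the order (ii), then (i), then (iii): the middle identity drops straight out of the defining recursion in Step~\ref{algstep:eta_k}, and it then bootstraps the other two. A preliminary observation makes everything below well-defined: by Step~\ref{algstep:L_k}, $L_k = L\,\vg_k^{T} S_k^{-1}\vg_k > 0$, since $\vg_k$ is a unit vector and $S_k = \text{diag}(\vs_k) + \delta\mathbb{I}$ is positive definite (hence so is $S_k^{-1}$); consequently Step~\ref{algstep:eta_k} forces $\eta_k \geq \frac{1}{2L_k} > 0$, and since $\eta_0 = 0$ the quantity $\eta_{k-1}^{2} L_{k-1}$ is nonnegative for every $k \geq 1$. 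So below no division by zero nor square root of a negative number arises.

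For (ii), I would rearrange Step~\ref{algstep:eta_k} into $\eta_k - \frac{1}{2L_k} = \sqrt{\frac{1}{4L_k^{2}} + \frac{\eta_{k-1}^{2} L_{k-1}}{L_k}}$ and square both sides; this is legitimate because the left-hand side is nonnegative and the radicand is nonnegative by the preliminary observation. The $\frac{1}{4L_k^{2}}$ terms cancel, leaving $\eta_k^{2} - \eta_k/L_k = \eta_{k-1}^{2} L_{k-1}/L_k$, and multiplying through by $L_k$ gives $\eta_{k-1}^{2} L_{k-1} - \eta_k^{2} L_k + \eta_k = 0$, which is exactly part (ii).

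For (i), I would induct on $k$. When $k = 1$, part (ii) with $\eta_0 = 0$ reads $\eta_1^{2} L_1 = \eta_1 = \sum_{i=1}^{1}\eta_i$; for the inductive step, assume $\eta_{k-1}^{2} L_{k-1} = \sum_{i=1}^{k-1}\eta_i$, rewrite part (ii) as $\eta_k^{2} L_k = \eta_{k-1}^{2} L_{k-1} + \eta_k$, and substitute the hypothesis to obtain $\eta_k^{2} L_k = \sum_{i=1}^{k}\eta_i$. Part (iii) then follows at once: each $\eta_i > 0$, so $\eta_k^{2} L_k = \sum_{i=1}^{k}\eta_i \geq \eta_k$, and dividing by $\eta_k > 0$ yields $\eta_k L_k \geq 1$. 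There is no genuine obstacle here — the content is elementary algebra plus a one-line induction; the only points deserving a word of care are the legitimacy of the squaring step used for (ii) and the a priori positivity of $L_k$ (hence of $\eta_k$), both handled by the preliminary observation.
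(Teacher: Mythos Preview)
Your proof is correct and follows essentially the same approach as the paper: both derive the quadratic identity from Step~\ref{algstep:eta_k} and combine it with a one-line induction, differing only in that you isolate~(ii) first and then feed it into the induction for~(i), whereas the paper embeds the same algebra directly inside the inductive step for~(i) and then remarks that~(ii) and~(iii) follow. Your explicit justification of the positivity of $L_k$ and $\eta_k$ (hence the legitimacy of squaring) is a nice touch that the paper leaves implicit.
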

\begin{proof}
	We prove~\ref{eta_k_rec} by induction. For $k = 1$, is is easy to verify that $\eta_1 = 1/L_{1}$, and so $\eta_{1}^2 L_{1} = \eta_{1}$ and the base case follows trivially. Now suppose $\eta_{k-1}^2 L_{k-1} = \sum_{i=1}^{k-1} \eta_i$. Re-arranging~\ref{eta_k_rec} for $k$ gives
	\begin{align*}
	0 = \eta_k^2 L_{k} -\eta_{k} - \sum_{i=1}^{k-1} \eta_i = \eta_k^2 L_{k} -\eta_{k} - \eta_{k-1}^2 L_{k-1}. 
	\end{align*}
	Now, it is easy to verify that the choice of $\eta_k$ in Algorithm~\ref{alg:flag} is a solution of the above quadratic equation. The rest of the items follow immediately from part~\ref{eta_k_rec}. \qed
\end{proof}

Once again, the FLARE analog of Lemma \ref{lem:eta_k} is

\begin{lemma}
	\label{lem:eta_k_flare}
For the choice of $\eta_k$ in Algorithm \ref{alg:flare} and $k\geq 1$, we have
\begin{enumerate}[label = (\roman*)]
		\item $\eta_k^2 \tilde{L}_{k} = \sum_{i=1}^k \eta_i$,
		\item $\eta^{2}_{k-1} \tilde{L}_{k-1} - \eta_{k}^{2} \tilde{L}_{k} + \eta_{k} = 0$, and
		\item $\eta_k\tilde{L}_k \geq 1$.
	\end{enumerate}
\end{lemma}
\begin{proof}[Proof of Lemma \ref{lem:eta_k_flare}]
Completely identical to proof of Lemma \ref{lem:eta_k}.
\end{proof}

\begin{corollary} 
	\label{cor:maintheorem}
	Let $D \defeq \sup_{\xx, \yy \in \C} \|\xx - \yy\|_\infty^2$. For any $\uu \in \C$, after $T$ iterations of Algorithm \ref{alg:flag}, we get
	%\begin{align*}
	%\sum_{k=1}^T \eta_k (F(\yy_{k+1}) - F(\uu))   \leq 1 + \frac{D}{2}\|\vs_T\|_1 + \sum_{k=1}^T\eta_k( \eta_k L_k -1 )\left( F(\yy_{k}) - F(\yy_{k+1}) \right).
	%\end{align*}
	\begin{align*}
	F(\yy_{T+1}) - F(\uu) \leq  \frac{LD}{T^2} + \frac{D\|\vs_T\|_1}{2 \sum_{k=1}^T \eta_{k}}.
	\end{align*}
	%$$\sum_{k=1}^T \eta_k (F(\yy_{k}) - F(u)) \leq \sum_{k=1}^T (\eta_k^2 L_k - \eta_{k+1}^2 L_{k+1}) F(\yy_{k}) + 2D\|s_T\|_1$$
\end{corollary}
\begin{proof}[Proof of corollary \ref{cor:maintheorem}]
	The result follows from Theorem~\ref{thm:mastertheorem} and Lemma~\ref{lem:eta_k} as well as noting that $\eta_k^2 L_k =\sum_{i=1}^k \eta_{i}\le \sum_{i=1}^T \eta_{i} =  \eta_T^2 L_T $. \qed
\end{proof}

The FLARE analog:
\begin{corollary} 
	\label{cor:maintheoremflare}
	Let $D \defeq \sup_{\xx, \yy \in \C} \|\xx - \yy\|_\infty^2$. For any $\uu \in \C$, after $T$ iterations of Algorithm \ref{alg:flare}, we get
	\begin{align*}
	F(\yy_{T+1}) - F(\uu) \leq  \frac{LD}{T^2} + \frac{D\|\vs_T\|_1}{2 \sum_{k=1}^T \eta_{k}}.
	\end{align*}
\end{corollary}
\begin{proof}[Proof of corollary \ref{cor:maintheoremflare}]
	The result follows from Theorem~\ref{thm:mastertheoremflare} and Lemma~\ref{lem:eta_k_flare} as well as noting that $\eta_k^2 L_k =\sum_{i=1}^k \eta_{i}\leq \sum_{i=1}^T \eta_{i} =  \eta_T^2 \tilde{L}_T $. \qed
\end{proof}

Finally, it only remains to lower bound  $\sum_{k=1}^T \eta_{k}$, which is done in the following Lemma.

\begin{lemma}
	\label{lem:sum_bound}
	For the choice of $\eta_k$ in Algorithm~\ref{alg:flag}, we have
	\begin{align*}
	\sum_{k=1}^T \eta_{k} \geq \frac{T^3}{1000 \sum_{k=1}^T L_k}  .
	\end{align*}
\end{lemma}

\begin{proof}[Proof of Lemma \ref{lem:sum_bound}]
	We prove by induction on $T$. For $T=1$, we have $\eta_1 = 1/L_{1}$, and the base case holds trivially. Suppose the desired relation holds for $T-1$. We have
	\begin{align*}
	\sum_{k=1}^T \eta_{k} &= \sum_{k=1}^{T-1} \eta_{k} + \eta_{T} \\
	 &\geq \frac{(T-1)^3}{1000\sum_{k=1}^{T-1} L_k} + \frac{1}{2 L_T} \\
     &+ \sqrt{\frac{1}{4 L_T^2} + \frac{(T-1)^3}{1000L_T \sum_{k=1}^{T-1} L_k} } \\
	&\geq \frac{(T-1)^3}{1000\sum_{k=1}^{T-1} L_k} + \sqrt{\frac{(T-1)^3}{1000L_T \sum_{k=1}^{T-1} L_k} } \\
	&\geq \frac{(T-1)^3}{1000\sum_{k=1}^{T-1} L_k} + \sqrt{\frac{T^3}{8000 L_T \sum_{k=1}^{T-1} L_k} }.
	\end{align*}
    Here, the first inequality is by the induction hypothesis on $\eta_{k}$. 
	Now if 
	\begin{align*}
	\frac{(T-1)^3}{1000\sum_{k=1}^{T-1} L_k} \geq \frac{T^3}{1000\sum_{k=1}^{T} L_k},
	\end{align*}
	then we are done. Otherwise denoting $\alpha \defeq \sum_{k=1}^{T} L_k$, we must have that
	\begin{align*}
	L_{T} &\leq \frac{\alpha T^3 - \alpha (T-1)^3}{T^3} \\
	&= \frac{\alpha T^3 - \alpha \left( T^3 - 3T^2 + 3T -1 \right)}{T^3} \\
	&= \frac{ \alpha( 3T^2 - 3T +1 ) }{T^3} \\
	&\leq \frac{4 \sum_{k=1}^T L_{k}}{T}.
	\end{align*}
	Hence, we get
	\begin{align*}
	\sum_{k=1}^T \eta_{k} &\geq \frac{(T-1)^3}{1000\sum_{k=1}^{T-1} L_k} + \sqrt{ \frac{T^4}{32000 L_T \left(\sum_{k=1}^{T} L_k \right)^2 } } \\
	&\geq \frac{(T-1)^3}{1000\sum_{k=1}^{T} L_k} + \frac{4T^2}{1000 \sum_{k=1}^{T} L_k  } \\
	&\geq \frac{T^3}{1000 \sum_{k=1}^{T} L_k  }.
	\end{align*}
	\qed
\end{proof}

\begin{remark}[Remark:]
	We note here that we made little effort to minimize constants, and that we used rather sloppy bounds such as $T-1\geq T/2$. As a result, the constant appearing above is very conservative and a mere by-product of our proof technique. %We have numerically verified that a much smaller constant (e.g., $10$) indeed satisfies the bound above.
\end{remark}
\begin{lemma}
	\label{lem:sum_bound_flare}
	For the choice of $\eta_k$ in Algorithm~\ref{alg:flare}, we have
	\begin{align*}
	\sum_{k=1}^T \eta_{k} \geq \frac{T^3}{\lambda \cdot 1000 \sum_{k=1}^T L_k}   .
	\end{align*}
\end{lemma}

\begin{proof}[Proof of Lemma \ref{lem:sum_bound_flare}]
Once again, exactly identical to the proof of Lemma \ref{lem:sum_bound}, we have 
	\begin{align*}
	\sum_{k=1}^T \eta_{k} \geq \frac{T^3}{ 1000 \sum_{k=1}^T \tilde{L}_k}  .
	\end{align*}
Finally, using the guarantee that $\tilde{L}_k \leq \lambda L_k$ from Step \ref{algstep:L_k_accept_condition} of Algorithm \ref{alg:advanceandverify} and Step \ref{algstep:L_k_flare_3} from Algorithm \ref{alg:flagiteration}, we get the conclusion.
\end{proof}

The proof of FLAG's main result, Theorem~\ref{thm:final_flag}, follows rather immediately.
\begin{proof}[Proof of Theorem~\ref{thm:final_flag}]
	The result follows immediately from Lemma~\ref{lem:sum_bound} and Corollary~\ref{cor:maintheorem} and noting that $\sum_{k=1}^T L_k = L \sum_{k=1}^T \vg_{k}^T S^{-1}_{k} \vg_k \leq 2 L q_{_T}$ by Lemma~\ref{lem:adareg} and $\|\s_T\|_1 = q_{_T}$ by Step~\ref{algstep:s_ki} of Algorithm~\ref{alg:flag} and definition of $q_{_T}$ in Lemma~\ref{lem:adareg}.
	This gives
	\begin{align*}
	F(\yy_{T+1}) - F(\uu) \leq  \frac{LD}{T^2} + \frac{q_{_T}^2}{T}\frac{1000LD}{T^2} \leq \frac{q_{_T}^2}{T}\frac{1001LD}{T^2}.
	\end{align*}
	Now from Lemma~\ref{lem:adareg}, we see that $ \beta \defeq  {q_{_T}^2}/{T} \in [1,d]$. Finally, the run-time per iteration follows from having to do $\log_2({1}/{\epsilon})$ calls to bisection, each taking $\mathcal{O}(\mathcal{T}_{_{\prox}})$ time. \qed
\end{proof}

The proof of FLARE's main result, Theorem \ref{thm:final_flare}, is obtained similarly to that of Theorem~\ref{thm:final_flag}.
\begin{proof}[Proof of Theorem~\ref{thm:final_flare}]
	The result follows immediately from Lemma~\ref{lem:sum_bound_flare} and Corollary~\ref{cor:maintheoremflare} and noting that $\sum_{k=1}^T L_k = L \sum_{k=1}^T \vg_{k}^T S^{-1}_{k} \vg_k \leq 2 L q_{_T}$ by Lemma~\ref{lem:adareg} and $\|\s_T\|_1 = q_{_T}$ by Step~\ref{algstep:sk_i_flare_1} of Algorithm~\ref{alg:advanceandverify} and Step \ref{algstep:sk_i_flare_2} of Algorithm \ref{alg:flagiteration} and definition of $q_{_T}$ in Lemma~\ref{lem:adareg}.
	This gives
	\begin{align*}
	F(\yy_{T+1}) - F(\uu) &\leq  \frac{LD}{T^2} + \frac{q_{_T}^2}{T}\frac{1000\lambda LD}{T^2} \\
	&\leq \frac{q_{_T}^2}{T}\frac{1001\lambda LD}{T^2}.
	\end{align*}
	Now from Lemma~\ref{lem:adareg}, we see that $ \beta \defeq  {q_{_T}^2}/{T} \in [1,d]$. Finally, we try to guess a suitable $\tilde{L}_k$ for $\log (d/\epsilon)$ times, and resort to BinarySearch after. If we resort to Algorithm \ref{alg:flagiteration} (essentially BinarySeaerch), we make $\log(1/{\epsilon})$ calls to bisection, so overall the number of inner iterations per outer iteration is same as Algorithm \ref{alg:flag}. Each inner iteration takes $\mathcal{O}(\mathcal{T}_{_{\prox}})$ time in the worst case (if we have to resort to Algorithm \ref{alg:flagiteration} each time). \qed
\end{proof}

\if false
\subsection{Proof of Theorem~\ref{thm:final_flare}}
\label{sec:proof_flare}
The proof of Theorem~\ref{thm:final_flare} follows very similar line of reasoning as that of Theorem~\ref{thm:final_flag}, with very minor modifications. Indeed, Lemmas \ref{lemma:grad_map},~\ref{lem:mir},~\ref{lem:adareg},~\ref{lem:eta_k},~\ref{lem:sum_bound} hold without any change. 
We also have the following immediate lemma.
\begin{lemma}
	\label{lem:flare}
	At each iteration $ k $ of Algorithm~\ref{alg:flare} we have 
	\begin{enumerate}[label = (\roman*)]
		\item \label{lem_flare_pk} $\|\pp_k\|_{S_{k}^{-1}}^{2} \leq L_{k} \|\pp_{k}\|^{2}/L$,
		\item \label{lem_flare_couple} $\lin{\pp_k, \xx_{k}-\zz_{k}} = (\eta_{k} L_{k} -1) \lin {\pp_k, \yy_{k} - \xx_{k}}$,
		\item \label{lem_flare_max_inner_iter} after at most ${\log\left( \frac{\sqrt{k-1}+\delta}{\delta}\right)}/{\log(\gamma)}$ repetitions of Steps~\ref{alg_step_Lk_gamma_flare}, we have $ L{ \vg_{k}^{T} S^{-1}_{k} \vg_{k}} \leq L_{k}$, i.e., wrong guess for $ L_{k} $ at iteration $ k-1 $ is corrected, and
		\item \label{lem_flare_sum_Lk} $ \sum_{k=1}^{T} L_{k} \leq 2 \gamma L q_{_{T}} $, where $q_{_{T}} $ is as in Lemma~\ref{lem:adareg}.
	\end{enumerate}
\end{lemma}
\begin{proof}
	Parts~\ref{lem_flare_pk} and~\ref{lem_flare_couple} immediately follow from the corresponding steps in the algorithm. For Part~\ref{lem_flare_max_inner_iter}, note that by Steps~\ref{alg_step_vg_k_flare}--\ref{algstep:S_k_flare} of Algorithm~\ref{alg:flare}, for all $ k $, we have
	\begin{align*}
	\delta \mathbb{I} \preceq S_{k} \preceq (\sqrt{k}+\delta) \mathbb{I}.
	\end{align*}
	Hence, it follows that, for all $ k $
	\begin{align*}
	\frac{L}{\sqrt{k} + \delta} \leq L \vg_{k}^{T} S^{-1}_{k} \vg_{k} \leq \frac{L}{\delta}.
	\end{align*}
	As a result, at iteration $ k $, Step~\ref{alg_step_Lk_gamma_flare} of Algorithm~\ref{alg:flare} is performed at most
	\begin{align*}
	t_{0} = \ceil*{\frac{\log\left( \frac{\sqrt{k-1}+\delta}{\delta}\right)}{\log(\gamma)}},
	\end{align*}
	times, upon which, we are guaranteed to have $ L \vg_{k}^{T} S^{-1}_{k} \vg_{k} \leq \gamma^{t_{0}} L \vg_{k-1}^{T} S^{-1}_{k-1} \vg_{k-1} \defeqr L_{k} $. 
	Suppose $ t \in [0,t_{0}]$ is the first time that  Step~\ref{alg_step_Lk_gamma_flare} of Algorithm~\ref{alg:flare} gives $ L \vg_{k}^{T} S^{-1}_{k} \vg_{k} \leq L_{k} $. In this case, we must have $ L_{k} \leq \gamma L \vg_{k}^{T} S^{-1}_{k} \vg_{k} $, since otherwise, this implies that $ t-1 $ must have been the first, which is a contradiction. Finally for Part~\ref{lem_flare_sum_Lk}, by Part~\ref{lem_item_q_low} of Lemma~\ref{lem:adareg}, we have
	\begin{align*}
	\sum_{k=1}^{T} L_{k} \leq  \gamma \sum_{k=1}^{T} L \vg_{k}^{T} S^{-1}_{k} \vg_{k} \leq 2 \gamma L q_{_{T}}. 
	\end{align*}
	\qed
\end{proof}

The following theorem is equivalent to Theorem~\ref{thm:mastertheorem}, but adopted for FLARE, and hence, its proof is omitted.
\begin{theorem} 
	\label{thm:mastertheorem_flare}
	Let $D \defeq \sup_{\xx, \yy \in \C} \|\xx - \yy\|_\infty^2$. For any $\uu \in \C$, after $T$ iterations of Algorithm \ref{alg:flare}, we get
	\begin{align*}
	\sum_{k=1}^T \Big\{\left(\eta^{2}_{k-1} L_{k-1} - \eta_{k}^{2} L_{k} + \eta_{k} \right) F(\yy_{k}) - \eta_{k} F(\uu)  \Big\}+ \eta_{T}^{2} L_{T} F(\yy_{T+1}) \leq \frac{D}{2}\|\vs_T\|_1.
	\end{align*}
\end{theorem}
Note that for the proof of Theorem \ref{thm:mastertheorem_flare}, we make use of various parts of Lemma~\ref{lem:flare}.  
Finally, we state, without proof, the following corollary which is equivalent to Corollary~\ref{cor:maintheorem}.
\begin{corollary} 
	\label{cor:maintheorem_flare}
	Let $D \defeq \sup_{\xx, \yy \in \C} \|\xx - \yy\|_\infty^2$. For any $\uu \in \C$, after $T$ iterations of Algorithm \ref{alg:flare}, we get
	\begin{align*}
	F(\yy_{T+1}) - F(\uu) \leq  \frac{D\|\vs_T\|_1}{2 \sum_{k=1}^T \eta_{k}}.
	\end{align*}
\end{corollary}

Using these results, the proof of Theorem~\ref{thm:final_flare} follows as that of Theorem~\ref{thm:final_flag}. 
\fi

\end{document}